\documentclass[11pt]{article}

\usepackage[T1]{fontenc}
\usepackage[utf8]{inputenc}

\usepackage[a4paper, margin=2cm]{geometry}
\usepackage{amsmath}
\usepackage{amssymb}
\usepackage{amsthm}
\usepackage{bbm}
\usepackage[title]{appendix}
\usepackage{xcolor}
\usepackage{ushort} 
\usepackage[hidelinks]{hyperref}
\hypersetup{
 colorlinks,
 linkcolor={red!50!black},
 citecolor={blue!50!black},
 urlcolor={blue!80!black}
}

\newcommand{\dd}{\mathrm{d}}
\newcommand{\ddiv}{\mathrm{div\,}}

\newcommand{\T}{\mathbb{T}}
\newcommand{\D}{\mathbb{D}}
\newcommand{\R}{\mathbb{R}}

\renewcommand{\epsilon}{\varepsilon}
\renewcommand{\rho}{\varrho}
\renewcommand{\leq}{\leqslant}
\renewcommand{\geq}{\geqslant}

\allowdisplaybreaks

\newtheorem{theo}{Theorem}
\newtheorem{lem}[theo]{Lemma}
\newtheorem{prop}[theo]{Proposition}
\newtheorem{rem}[theo]{Remark}

\newtheorem{de}[theo]{Definition}
\newtheorem{cor}[theo]{Corollary}

\numberwithin{equation}{section}
\numberwithin{theo}{section}

\title{Dissipative Solutions to a Compressible Non-Newtonian Korteweg System with Density-Dependent Viscous Stress Tensor}

\author{Didier Bresch$^\dagger$, Christophe Lacave$^\dagger$, Maja Szlenk$^{\dagger,\S}$\footnote{corresponding author}}

\begin{document}

\maketitle

{
\footnotesize

\centerline{$^\dagger\;$LAMA, UMR CNRS 5127, Universit\'e Savoie Mont Blanc, }
\centerline{B\^{a}t. Le Chablais, Campus Scientifique 73376 Le Bourget du Lac, France}

\bigbreak
\centerline{$^\S\;$Institute of Applied Mathematics and Mechanics, University of Warsaw, }
\centerline{ul. Banacha 2, Warsaw 02-097, Poland}
}

\begin{abstract} The main objective of this paper is to prove that if capillarity effect is taken into account then there exist dissipative solutions to a system describing viscoplastic compressible
flows with density dependent viscosities in a periodic domain $\T^d$ with $d=2,3$. We calculate the relative entropy inequality and in consequence show existence of dissipative solutions and the weak-strong uniqueness for this system. Our result extends the recent result concerning the link between Euler--Korteweg and Navier--Stokes--Korteweg systems
for Newtonian flows (when the viscosity depends on the density) [See D.~Bresch, M. Gisclon, I. Lacroix-Violet, {\it Arch. Rational Mech. Anal.} (2019)] to non-Newtonian flows. 
\end{abstract}

\paragraph{Keywords:} compressible Navier--Stokes equations, non-Newtonian fluids, weak solutions, dissipative solutions, weak-strong uniqueness

\section{Introduction}

The analysis of compressible non-Newtonian systems, where the viscosity depends on the density (or the pressure), is an important topic in respect of the physical applications. Such systems are used to model different types of geophysical phenomena, in particular granular media \cite{Chupin_et_al, Ionescu_et_al} or an avalanche \cite{BaBaGr,BrFeIoVi,avalanche2}. For more compressible models with pressure-dependent viscosity we refer also to \cite{HrMaRa}. However, well-posedness property for such compressible non-Newtonian systems is still an open problem in the framework of weak solutions \`a la Leray (or Hoff). It is therefore important to understand possible physical factors which might lead to results related to well-posedness, even in a weaker sense. Our aim is to show in the compressible setting that incorporating the capillary forces leads to existence of dissipative solutions, which satisfy weak-strong uniqueness.

Here we consider the compressible non-Newtonian Navier--Stokes--Korteweg system in the form
\begin{equation}\label{NSK}
\begin{aligned}
 \partial_t\varrho + \ddiv(\varrho u) &= 0, \\
 \partial_t(\varrho u) + \ddiv(\varrho u\otimes u) - \ddiv(\varrho\mathbb{S}(\D u)) + \nabla p(\varrho) - \kappa\ddiv(\varrho\nabla^2\ln\varrho) &= 0
 \end{aligned}
\end{equation}
on $[0,T]\times\T^d$ (namely with periodic boundary conditions). The system is equipped with the initial conditions 
\begin{equation}\label{initialBC}
\varrho_{\vert_{t=0}}=\varrho_0, \qquad (\rho u)_{\vert_{t=0}}=\rho_0 u_0 
\end{equation}
with
\begin{equation}\label{initial}
 \rho_0\ge 0, \qquad 
 \sqrt{\varrho_0}\in H^1(\T^d), \qquad \sqrt{\varrho_0}u_0\in L^2(\T^d).
\end{equation}
For the viscous stress tensor $\mathbb{S}$, we assume that $\mathbb{S}\colon \R^{d\times d}_{\mathrm{sym}}\to\R^{d\times d}_{\mathrm{sym}}$ is differentiable, and for some $p\ge 1$ satisfies
\begin{equation}\label{S1} 
\begin{aligned}
|\mathbb{S}(A)| &\leq C|A|^{p-1} \\
\mathbb{S}(A):A &\geq c|A|^p, \quad \text{if} \quad |A|>c_1>0
\end{aligned}
\end{equation}
for $A\in\R_{\mathrm{sym}}^{d\times d}$, and 
\begin{equation}\label{S2}
(\mathbb{S}(A)-\mathbb{S}(B)):(A-B)\geq 0 \quad \text{for} \quad A,B\in\R^{d\times d}_{\mathrm{sym}}.
\end{equation}
For the pressure function $s\mapsto p(s)$, we assume that
\begin{equation}\label{P1} p\in C([0,\infty))\cap C^2((0,\infty)), \quad p(0)=0, \quad p'>0 \end{equation}
and
\begin{equation}\label{P2} a \rho^{\gamma-1} - b \leq p'(\rho) \leq \frac{1}{a} \rho^{\gamma-1} + b \qquad \hbox{ with } \gamma > 1 \end{equation}
for some constants $a,b>0$.

\begin{rem}
The assumptions \eqref{S1}--\eqref{S2} include in particular the cases
\[ \mathbb{S}(\D u) = |\D u|^{p-2}\D u \qquad 
\hbox{ with } \qquad p>1 \]
and
\[ \mathbb{S}(\D u) = \frac{\D u}{\delta + |\D u|}
 \hbox{ with } \delta>0. \]
\end{rem}

The additional stress depending on $\varrho$ was proposed by Korteweg \cite{korteweg} to model capillarity effects. In the most general setting, the Korteweg term is given by
\[ \varrho\nabla\left(K(\varrho)\Delta\varrho + \frac{1}{2}K'(\varrho)|\nabla\varrho|^2\right) = \ddiv\mathbb{K} \]
for 
\[ \mathbb{K} = \left(\varrho\ddiv(K(\varrho)\nabla\varrho) + \frac{1}{2}\left(K(\varrho)-\varrho K'(\varrho)\right)|\nabla\varrho|^2\right)\mathbb{I} - K(\varrho)\nabla\varrho\otimes\nabla\varrho. \]
We consider the special case where ${\rm div} \mathbb{K} = 2\varrho\nabla\left({\Delta\sqrt{\varrho}}/{\sqrt{\varrho}}\right)=\ddiv\left(\varrho\nabla^2\ln\varrho\right)$, which corresponds to the case $K(\varrho)= {1}/{\varrho}$ (see also Lemma~\ref{lem:Korteweg}). Such term was considered in particular in the context of quantum fluids, see e. g. \cite{jungel}. For modelling and derivation of general Korteweg stress we refer also to \cite{HeidaMalek,MalekPrusa,Rajagopal}

\subsection{State of the art and the main result}

\paragraph{Density-dependent viscosities.} The question of existence of weak solutions to Navier--Stokes equations with the density-dependent viscosities was widely studied in the Newtonian case, namely when $\mathbb{S}(\D u)$ is linear. Works on this topic were started by Bresch, Desjardins and Lin in \cite{BD3} for the Navier--Stokes--Korteweg system. In the following paper by Bresch and Desjardins \cite{BD1}, the authors showed existence of weak solution for the 2D shallow water equations with drag terms, by showing the additional estimate $\nabla\sqrt{\varrho}\in L^\infty(0,T;L^2)$. In \cite{BDCRAS, BD2}, they further extend this approach to the case where 
\[ \mathbb{S}=\mu(\varrho)\D u + \lambda(\varrho)\ddiv u\mathbb{I} \]
and $\lambda(\varrho)=\varrho\mu'(\varrho)-\mu(\varrho)$ and show that the solution satisfies the estimate $\nabla\psi(\varrho)\in L^\infty(0,T;L^2)$ for $\psi'(\varrho)={\mu'(\varrho)}/{\sqrt{\varrho}}$. They also propose compatible physical quantities which may be considered for construction of solutions such as drag terms, capillary quantity or singular pressure. 

Under the same conditions on $\mu$ and $\lambda$, Mellet \& Vasseur \cite{mellet-vasseur} showed the additional estimate on $\varrho (1+|u|^2)\ln(1+|u|^2)$, which allows to show stability of solutions without the extra terms. With the help of this estimate, the existence of global weak solutions in the case $\mu=\varrho$, $\lambda=0$ was shown by Vasseur \& Yu in \cite{vasseur-yu,vasseur-yu2} after the work by Gisclon \& Lacroix-Violet \cite{LaGi15}. A different construction was also proposed in \cite{li-xin}. Later, Lacroix-Violet \& Vasseur \cite{LaVa18} expanded the ideas from \cite{vasseur-yu2} to construct a renormalised solution to the quantum Navier--Stokes system.
Note that the first level of the construction scheme in the above papers includes suitable drag terms, capillarity quantity and singular pressure in the momentum equation as suggested in \cite{BD2}.

Considering the more general forms of viscosity coefficients, in \cite{BrVaYu} the authors show the existence of $\kappa$-entropy weak solutions (see \cite{BrDeZa} for the definition) for the Navier--Stokes, and Navier--Stokes--Korteweg systems with a certain compatibility condition between $\mu(\varrho)$ and $K(\varrho)$. The case of more general relations between $\mu$ and $K$ was also considered in 1D in \cite{AnBrSp} for $\mu(\rho)=\varrho^\alpha$ and $K(\rho)=\varrho^\beta$ with some conditions between $\alpha$ and $\beta$.

\paragraph{Dissipative solutions.} The concept of dissipative solutions was first introduced by Lions \cite[Section 4.4]{lions1} in the context of incompressible Euler equations. Even though it is a weaker notion of a solution than in a standard Leray-Hopf framework, it guarantees a crucial weak-strong uniqueness property. Because of that, it has been widely used in recent years for different types of systems. Concerning compressible flows, we mention in particular \cite{BaNg,FeJiNo,feireisnovotnyl_book,GuoYu,KwNoSa,Sueur}, where the authors analysed the dissipative solutions or weak-strong uniqueness for the Navier--Stokes and Euler equations for constant viscosities, and \cite{BrNoVi} for the density-dependent viscosities. Concerning Korteweg fluids, in \cite{BrGiLa} the authors showed the existence of global dissipative solutions to Euler--Korteweg system. Another concept related to relative entropy and weak-strong uniqueness are measured-valued dissipative solutions. For the compressible case they were analysed for example in \cite{FeGwWi}. Another interesting result can be found in \cite{nonlocal_euler}, where the authors considered a nonlocal Euler--Korteweg system. In the context of our work, it is also especially important to mention the result of Abbatielo, Feireisl, Novotny \cite{AbbFeiNov}, where it was shown that there exists a dissipative solution to the non-Newtonian Navier--Stokes system with viscosity coefficients independent of $\varrho$. 

\paragraph{Non-Newtonian fluids.} The existence theory has been widely studied in the incompressible case. The works on this topic were started by Ladyzhenskaya in \cite{Ladyzhenskaya1967}, and then further developed for many types of fluids, see in particular \cite{DiRuWo,FrMaSt,MaNeRu,wolf}. However, there are only few results concerning the compressible case. In \cite{KaMaNe} the authors showed the existence of a local-in-time strong solution to the Navier--Stokes system with $\mathbb{S}$ satisfying the growth and monotonicity conditions in the spirit of \eqref{S1}-\eqref{S2}. However, the question of global existence of weak solutions still remains open. Recently, Bresch, Burtea \& Szlenk solved this problem in the one-dimensional case for a sufficiently high $p$ \cite{BrBuSz}. Existence of weak solutions was also shown before for the steady case in \cite{BurSzl}. For the full multi-dimensional Navier--Stokes system, the existing works cover the special cases of $\mathbb{S}$, which provide a better regularity of $\ddiv u$. The first result in this topic was obtained by Mamontov in \cite{Mamontov1999, Mamontov1999_2} in the framework of Orlicz spaces, where $\mathbb{S}$ was an exponential function of $\D u$. More recently, in \cite{FLM} Feireisl, Liao and M\'alek obtained a weak-variational solution for the case, where $\mathbb{S}$ contains a singularity in $\ddiv u$. This crucial form of the stress tensor provided an $L^\infty$ bound on $\ddiv u$, which allowed to show the compactness of the density sequence. The method from \cite{FLM} was also adapted in \cite{PoSz} to obtain the result on the Stokes system for the viscoplastic case, where $\mathbb{S}=\mu_0\D u + \mu_1\frac{\D u}{|\D u|}$. In the context of current work, it is important to note that all the above results concern the case of viscosity independent of $\varrho$.

\paragraph{Main result.} The objective of this paper is to extend the dissipative framework to the non-Newtonian case with density-dependent viscosities. We introduce the definition of a solution in a similar manner as Lions in \cite{lions1}, by defining the relative entropy functional (see Definition~\ref{def:dissipative}). Then, our main result states:
\begin{theo}\label{th_main}
Under the assumptions \eqref{S1}-\eqref{S2} and \eqref{P1}-\eqref{P2}, there exists a global-in-time dissipative solution to \eqref{NSK} in the sense of Definition \eqref{def:dissipative} with the initial conditions $(\varrho_0,u_0)$ satisfying \eqref{initial}.
\end{theo}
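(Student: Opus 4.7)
The plan is to obtain the dissipative solution as the limit of a sequence of approximate solutions to a suitably regularized version of \eqref{NSK}, and then to check the relative entropy inequality by a direct computation against smooth test pairs. At the approximate level I would follow the Bresch--Desjardins--Vasseur--Yu programme adapted to the non-Newtonian case: a Newtonian viscous correction $\epsilon\,\ddiv(\D u)$, drag terms of the form $r_0 u + r_1\rho|u|^2 u$, a singular pressure perturbation keeping $\rho$ bounded away from $0$, and possibly a Galerkin truncation in the velocity. At this level classical/strong solutions can be produced, and one propagates estimates uniform in the approximation parameters before passing to the limit in the order opposite to their introduction.

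Three estimates are the workhorses. First, the classical energy identity yields uniform control of $\rho|u|^2/2 + H(\rho) + \frac{\kappa}{2}|\nabla\sqrt{\rho}|^2$ in $L^\infty_t L^1_x$, where $H$ is the pressure potential. Second, the BD-type entropy, relying on the special divergence form $\ddiv(\rho\mathbb{S}(\D u))$ combined with the Korteweg contribution recast through Lemma~\ref{lem:Korteweg}, provides additional $L^\infty_t L^2_x$ control on $\nabla\sqrt{\rho}$ and, more importantly, $L^2_t H^1_x$ control that upgrades the weak convergence of $\sqrt{\rho_n}$ to strong convergence (crucially using $\kappa>0$). Third, \eqref{S1} gives a weighted dissipation estimate on $\rho\,\mathbb{S}(\D u):\D u$ which, together with the previous ones, yields a weighted $L^p$ bound on $\D u$. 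Strong compactness of the density in $L^2_t L^2_x$ allows us to pass to the limit in $p(\rho_n)$ and in the Korteweg term, while the continuity equation is handled by a renormalization argument in the spirit of DiPerna--Lions.

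The main obstacle is the identification of the weak limit of $\rho_n\mathbb{S}(\D u_n)$, since strong convergence of $\D u_n$ is out of reach in the compressible non-Newtonian setting. Following the strategy of Abbatiello--Feireisl--Novotn\'y, I would exploit the monotonicity \eqref{S2} through a Minty-type argument: for every smooth test velocity $U$,
\[
\int_0^T\int_{\T^d} \rho_n\bigl(\mathbb{S}(\D u_n)-\mathbb{S}(\D U)\bigr):(\D u_n-\D U)\,\dd x\,\dd t \geq 0,
\]
and passing to the limit using the strong convergence of $\rho_n$ identifies the limit a.e.\ on $\{\rho>0\}$. The same monotonicity is what will ensure the correct sign in the relative entropy inequality, since the non-Newtonian contribution appears precisely as the above expression with the limit pair $(\rho,u)$ and an arbitrary smooth test pair $(r,U)$.

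Once the weak solution is in hand, the proof concludes by differentiating the relative entropy
\[
\mathcal{E}(\rho,u\,|\,r,U) \;=\; \int_{\T^d}\Bigl(\tfrac{1}{2}\rho|u-U|^2 + H(\rho)-H'(r)(\rho-r)-H(r) + \tfrac{\kappa}{2}\bigl|\nabla\sqrt{\rho}-\nabla\sqrt{r}\bigr|^2\Bigr)\dd x
\]
along the flow, using the momentum equation tested against $U$, the identity $\ddiv(\rho\nabla^2\ln\rho)=2\rho\nabla(\Delta\sqrt{\rho}/\sqrt{\rho})$ of Lemma~\ref{lem:Korteweg} to handle the capillarity contribution, and \eqref{S2} to absorb the viscous term. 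A Gr\"onwall argument applied to the resulting inequality delivers, in addition to Theorem~\ref{th_main}, the announced weak--strong uniqueness as a corollary.
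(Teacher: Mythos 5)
Your overall architecture (regularize, get uniform bounds, pass to the limit, then run a Gr\"onwall argument on the relative entropy) matches the paper in spirit, but two steps of your plan would fail as written, and they are precisely the points the paper is designed to circumvent. First, you invoke a ``BD-type entropy, relying on the special divergence form $\ddiv(\rho\mathbb{S}(\D u))$'' to get $L^\infty_t L^2_x$ and $L^2_t H^1_x$ control of $\nabla\sqrt{\rho}$. The Bresch--Desjardins entropy is an algebraic identity tied to the \emph{Newtonian} structure $\mu(\rho)\D u+\lambda(\rho)\ddiv u\,\mathbb{I}$ with $\lambda=\rho\mu'-\mu$; for a general monotone $\mathbb{S}$ satisfying only \eqref{S1}--\eqref{S2} no such identity is available. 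In the paper the $L^\infty_t L^2_x$ bound on $\nabla\sqrt{\rho}$ comes for free from the energy itself (the Korteweg term contributes $\kappa\rho|\nabla\ln\rho|^2=4\kappa|\nabla\sqrt{\rho}|^2$ to $E$), and the stronger space compactness ($\sqrt{\rho}\in L^2_t H^2_x$, $\nabla\rho^{1/4}\in L^4_{t,x}$, with constants degenerating as $\varepsilon\to0$) comes from the parabolic regularization $\varepsilon\Delta\rho$ of the continuity equation interacting with the Korteweg energy, via \eqref{rel1}--\eqref{rel2}. Your choice of regularizers (Newtonian correction, drag terms, singular pressure, \`a la Vasseur--Yu) neither produces these bounds nor replaces them.

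Second, and more fundamentally, your Minty argument to identify the weak limit of $\rho_n\mathbb{S}(\D u_n)$ cannot be closed at the last level of approximation. To exploit monotonicity you must control $\lim_n\int\rho_n\mathbb{S}(\D u_n):\D u_n$ from above by $\int\rho\,\overline{\mathbb{S}(\D u)}:\D u$, which requires using $u$ itself as a test function in the limit momentum equation; with only the weighted bound $\rho^{1/p}\D u\in L^p$ and $\sqrt{\rho}u\in L^\infty_tL^2_x$, the convective and stress terms are not integrable enough for that, and the degenerate weight $\rho_n$ (which may vanish) blocks the pointwise identification on $\{\rho>0\}$ as well. This is exactly the open problem of weak existence for compressible non-Newtonian flows that the paper acknowledges. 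The paper's way out is twofold: (i) it adds the extra dissipation $\nu\ddiv(|\D u|^{q-2}\D u)$ with $q>\max(3,3p/2)$, so that at the Galerkin limit $u$ \emph{is} an admissible test function and the strict monotonicity of $A\mapsto|A|^{q-2}A$ (Lemma~\ref{q_lem}) upgrades to strong $L^q$ convergence of $\D u_N$, identifying $\rho\mathbb{S}(\D u)$ at fixed $\varepsilon,\nu$; and (ii) in the final limit $\nu=\varepsilon\to0$ it makes no attempt to identify the stress or to produce a weak solution of \eqref{NSK}: it derives the relative entropy inequality (with explicit $b_{\mathrm{app}}$ error terms) at the approximate level, where all manipulations are justified, and passes to the limit in that inequality alone, using convexity and lower semicontinuity. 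Your plan of ``first obtain a weak solution of \eqref{NSK}, then differentiate $\mathcal{E}$ along the flow'' therefore presupposes a result that is not available; the definition of dissipative solution exists precisely so that this step can be avoided. (A minor further point: the paper's functional penalizes $\frac{\kappa}{2}\rho|\nabla\ln\rho-\nabla\ln r|^2$, not $\frac{\kappa}{2}|\nabla\sqrt{\rho}-\nabla\sqrt{r}|^2$; the algebra in Propositions~\ref{prop:relative1}--\ref{prop_rel2} depends on this choice.)
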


The rest of the paper will be organized as follows: in Section~\ref{diss_section}, we give the precise definition of a dissipative solution (see Definition~\ref{def:dissipative}) and we explain the motivation behind it. Then, the next two sections are devoted to prove Theorem~\ref{th_main}. In Section~\ref{ex_section} we introduce an approximated system and show that it admits a global weak solution, using the Galerkin method. Note that the approximation involves an additional nonlinear viscosity, which is frequently used in the modelling of non-Newtonian fluids (see also comments in Section~\ref{ex_section}).
Next, in Section~\ref{ex_section2} we obtain the relative entropy inequality for the approximated system and pass to the limit with the regularizing parameters. This results in the limit satisfying the requirements of Definition~\ref{def:dissipative}, which finishes the proof of Theorem~\ref{th_main}. At the end of the paper we include the Appendix with useful auxiliary lemmas used in the proofs.

\begin{rem}
In the proofs we will focus on the case $d=3$, keeping in mind that the Sobolev embeddings we use are in particular valid for $d=2$ as well. 
\end{rem}

\paragraph{Convention.} We denote
 \begin{itemize}
 \item $\displaystyle u = \begin{pmatrix} u_1 \\ \vdots \\ u_d \end{pmatrix}$
 \item $\nabla u =(\partial_j u_i)_{i,j}$, where $i$ being the line, $j$ the colon
 \item $\displaystyle\ddiv (A_{i,j}) = \begin{pmatrix} \sum_j \partial_j A_{1,j} \\ \vdots \\ \sum_j \partial_j A_{d,j} \end{pmatrix}$
 \item $u\otimes v = (u_j v_i)_{i,j}$, where $i$ being the line, $j$ the colon
 \end{itemize}

\section{Dissipative solutions - motivation and relative entropy inequality.}\label{diss_section}
 In order to define the dissipative solution, we need to first introduce the relative entropy functional (Firstly defined in \cite{BrNoVi} for the compressible Navier-Stokes system with 
 the shear viscosity $\mu(\rho) = \rho$ and the bulk viscosity $\lambda(\rho)=0$ and generalized for more general viscosities in \cite{BrGiLa}) in the form
\begin{equation}\label{Rel}
 \mathcal{E}(t) = \mathcal{E}[(\varrho,u)|(r,v)](t) = \frac{1}{2}\int_{\T^d} (\varrho|u-v|^2 + \kappa\varrho|\nabla\ln\varrho-\nabla\ln r|^2)\;\dd x + \int_{\T^d} H(\varrho|r) \;\dd x, 
 \end{equation} 
 with 
\[ H(\varrho|r) = H(\varrho)-H(r)-H'(r)(\varrho-r), \qquad H(\rho) = \rho \int_{\bar\rho}^\rho \frac{p(s)}{s^2}\, \dd s,\]
 where $\bar\rho$ a constant reference state. The two couples $(\rho,u)$ and $(r,v)$ will be defined later-on. Let us note that
 \[
 H''(\rho)=\frac{p'(\rho)}{\rho}>0.
 \]

 We start with taking $(\varrho, u)$ to be a weak solution, satisfying the energy inequality, which in this case has the form
\begin{equation}\label{energy}
 E(t) + \int_0^t\int_{\T^d} \varrho\mathbb{S}(\D u):\D u\;\dd x\dd s \leq E(0)
\end{equation}
for
\begin{equation}\label{def:Energy}
 E(t) = \frac{1}{2}\int \Big(\varrho|u|^2 + \kappa\varrho|\nabla\ln\varrho|^2 \Big)\;\dd x + \int H(\varrho) \;\dd x.
\end{equation}
(see also Proposition~\ref{prop:energy}).
We prove the following:
\begin{prop}\label{prop:relative1}
If $(\varrho,u)$ is a (sufficiently regular) weak solution to \eqref{NSK}--\eqref{initialBC} satisfying the energy inequality \eqref{energy}, then 
\begin{equation*}
\begin{aligned}
\mathcal{E}(t)-\mathcal{E}(0) \leq & -\int_0^t\int_{\T^d} \varrho\mathbb{S}(\D v):(\D u-\D v)\;\dd x\dd s \\
&- \int_0^t\int_{\T^d} p\ddiv v\;\dd x\dd s + \kappa\int_0^t\int \varrho\nabla^2\ln\varrho:\nabla v\;\dd x\dd s \\
&+ \int_0^t\int_{\T^d} \varrho(v-u)\cdot\partial_t v\;\dd x\dd s + \int_0^t\int_{\T^d} \varrho u\otimes(v-u):\nabla v\;\dd x\dd s \\
&+ \kappa\int_0^t\int_{\T^d} \varrho(\nabla\ln r-\nabla\ln\varrho)\cdot\partial_t\nabla\ln r\;\dd x\dd s \\
&+ \kappa\int_0^t\int_{\T^d} \varrho u\otimes(\nabla\ln r-\nabla\ln\varrho):\nabla^2\ln r\;\dd x\dd s - \kappa\int_0^t\int_{\T^d} \varrho (\nabla u)^T:\nabla^2\ln r\;\dd x\dd s \\
&- \int_0^t\int_{\T^d} \partial_t(H'(r))(\varrho-r) +\varrho u\cdot\nabla(H'(r))\;\dd x\dd s
\end{aligned}\end{equation*}
for all smooth $(r,v)$.
\end{prop}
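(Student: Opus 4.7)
The plan is to decompose the relative entropy as the intrinsic energy $E(t)$ (whose evolution is controlled by the energy inequality \eqref{energy}) plus ``cross'' terms involving the test pair $(r,v)$, and to compute the time derivative of each cross term by testing the equations in \eqref{NSK} against appropriate quantities built from $r$ and $v$. Using $\varrho\nabla\ln\varrho=\nabla\varrho$,
\[
\mathcal{E}(t)=E(t)+\Bigl[\tfrac12\int_{\T^d}\varrho|v|^2-\int_{\T^d}\varrho u\cdot v\Bigr]+\Bigl[\tfrac{\kappa}{2}\int_{\T^d}\varrho|\nabla\ln r|^2-\kappa\int_{\T^d}\nabla\varrho\cdot\nabla\ln r\Bigr]-\int_{\T^d}H(r)-\int_{\T^d}H'(r)(\varrho-r).
\]
The energy inequality gives $E(t)-E(0)\leq-\int_0^t\!\int\varrho\mathbb{S}(\D u):\D u\,\dd x\dd s$, so it remains to compute the time derivatives of the remaining pieces and add everything up.

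For the ``kinetic cross'' block, I test the momentum equation in \eqref{NSK} against the smooth field $v$: the four divergence terms integrate by parts, giving $\int\varrho u\otimes u:\nabla v$, $-\int\varrho\mathbb{S}(\D u):\D v$ (using the symmetry of $\mathbb{S}$), $\int p\,\ddiv v$, and $-\kappa\int\varrho\nabla^2\ln\varrho:\nabla v$; together with $\int\varrho u\cdot\partial_t v$ these assemble $\tfrac{d}{dt}\int\varrho u\cdot v$. The time derivative of $\tfrac12\int\varrho|v|^2$ is handled by the continuity equation and yields $\int\varrho(u\otimes v):\nabla v+\int\varrho v\cdot\partial_t v$. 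Subtracting produces exactly the terms $\int\varrho u\otimes(v-u):\nabla v$, $\int\varrho(v-u)\cdot\partial_t v$, $-\int p\,\ddiv v$, $+\kappa\int\varrho\nabla^2\ln\varrho:\nabla v$, plus the residual $+\int\varrho\mathbb{S}(\D u):\D v$. Combined with $-\int\varrho\mathbb{S}(\D u):\D u$ from the energy inequality, this last quantity becomes $-\int\varrho\mathbb{S}(\D u):(\D u-\D v)$, which by monotonicity \eqref{S2} is bounded above by $-\int\varrho\mathbb{S}(\D v):(\D u-\D v)$; dropping the nonnegative remainder is precisely where the inequality sign appears.

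For the ``Korteweg cross'' block, I first rewrite $-\kappa\int\nabla\varrho\cdot\nabla\ln r=\kappa\int\varrho\,\Delta\ln r$. Using the continuity equation and integrating by parts,
\[
\tfrac{d}{dt}\,\kappa\!\int\varrho\,\Delta\ln r=\kappa\!\int\varrho u\cdot\nabla\Delta\ln r-\kappa\!\int\varrho\nabla\ln\varrho\cdot\partial_t\nabla\ln r,
\]
and similarly $\tfrac{d}{dt}\tfrac{\kappa}{2}\int\varrho|\nabla\ln r|^2=\kappa\int\varrho(u\otimes\nabla\ln r):\nabla^2\ln r+\kappa\int\varrho\nabla\ln r\cdot\partial_t\nabla\ln r$. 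The two $\partial_t\nabla\ln r$ pieces combine into $\kappa\int\varrho(\nabla\ln r-\nabla\ln\varrho)\cdot\partial_t\nabla\ln r$. The key remaining step is the identity
\[
\int\varrho u\cdot\nabla\Delta\ln r\,\dd x=-\int\varrho\,u\otimes\nabla\ln\varrho:\nabla^2\ln r\,\dd x-\int\varrho\,(\nabla u)^T:\nabla^2\ln r\,\dd x,
\]
obtained by writing $\Delta\ln r=\partial_iz_i$ with $z=\nabla\ln r$, integrating $\int\varrho u_j\partial_j\partial_iz_i$ by parts in $j$, and using the symmetry $\partial_iz_j=\partial_jz_i$ of the Hessian. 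Added to $\kappa\int\varrho(u\otimes\nabla\ln r):\nabla^2\ln r$, it reproduces exactly the two Korteweg convective terms $\kappa\int\varrho u\otimes(\nabla\ln r-\nabla\ln\varrho):\nabla^2\ln r$ and $-\kappa\int\varrho(\nabla u)^T:\nabla^2\ln r$ of the proposition.

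Finally, for the $H$-block, $\tfrac{d}{dt}\int H(r)=\int H'(r)\partial_tr$ and $\tfrac{d}{dt}\int H'(r)(\varrho-r)=\int\partial_t(H'(r))(\varrho-r)+\int H'(r)\partial_t\varrho-\int H'(r)\partial_tr$; the $\int H'(r)\partial_tr$ contributions cancel, and the continuity equation turns $\int H'(r)\partial_t\varrho$ into $\int\varrho u\cdot\nabla H'(r)$, which is the last line of the statement. Assembling all contributions and integrating in time yields the claimed inequality. I expect the main technical obstacle to be the bookkeeping in the Korteweg block, specifically the double integration by parts that relies on the symmetry of $\nabla^2\ln r$ together with the paper's index conventions for $u\otimes v$ and $\nabla u$; every other step is a straightforward chain-rule or energy-inequality application, with smoothness of $(r,v)$ providing the justification.
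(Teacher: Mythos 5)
Your proposal is correct and follows essentially the same route as the paper: the same decomposition of $\mathcal{E}$ into $E$ plus cross terms, the same use of the momentum and continuity equations together with the energy inequality, the same monotonicity step \eqref{S2} producing the inequality, and the same cancellation in the $H$-block. The only difference is cosmetic bookkeeping in the Korteweg block (you integrate $-\kappa\int\nabla\varrho\cdot\nabla\ln r$ by parts before differentiating, whereas the paper uses $\partial_t\nabla\varrho=-\nabla\ddiv(\varrho u)=-\ddiv(\varrho u\otimes\nabla\ln\varrho)-\ddiv(\varrho(\nabla u)^T)$ directly), which amounts to the same computation via the symmetry of $\nabla^2\ln r$.
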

\begin{proof}
We have
\begin{align*} \mathcal{E}(t)-\mathcal{E}(0) =& E(t)-E(0)
+ \int_0^t\frac{\dd}{\dd t}\int_{\T^d}\left(-\varrho u\cdot v + \frac{1}{2}\varrho|v|^2\right)\;\dd x\dd s \\
&+ \kappa\int_0^t\frac{\dd}{\dd t}\int_{\T^d}\left(-\varrho\nabla\ln\varrho\cdot \nabla\ln r+ \frac{1}{2}\varrho|\nabla\ln r|^2\right)\;\dd x\dd s \\
&- \int_0^t\frac{\dd}{\dd t}\int_{\T^d}\left(H(r)+H'(r)(\varrho-r)\right)\;\dd x\dd s. \end{align*}
From \eqref{NSK}, we have
\begin{align*} \frac{\dd}{\dd t}\int_{\T^d}(-\varrho u\cdot v)\;\dd x =& -\int_{\T^d} \partial_t(\varrho u)\cdot v\;\dd x - \int_{\T^d} \varrho u\cdot\partial_tv\;\dd x \\
=& -\int_{\T^d} \varrho u\otimes u:\nabla v\;\dd x + \int_{\T^d} \varrho\mathbb{S}(\D u):\D v\;\dd x - \int_{\T^d} p\ddiv v\;\dd x \\
&+ \kappa\int_{\T^d} \varrho\nabla^2\ln\varrho:\nabla v\;\dd x - \int_{\T^d} \varrho u\cdot\partial_tv\;\dd x
\end{align*}
and
\[ \frac{\dd}{\dd t}\int_{\T^d}\left(\frac{1}{2}\varrho|v|^2\right)\;\dd x = -\frac12\int_{\T^d} |v|^2\ddiv(\rho u)\;\dd x + \int_{\T^d} \varrho v\cdot\partial_t v\;\dd x= \int_{\T^d} \varrho u\otimes v:\nabla v\;\dd x + \int_{\T^d} \varrho v\cdot\partial_t v\;\dd x. \]
Similarly, using the fact that 
\[
 \partial_t(\varrho\nabla\ln\varrho) =
 \partial_t\nabla \rho = -\nabla\ddiv(\rho u) =
 -\ddiv(\varrho u\otimes\nabla\ln\varrho) - \ddiv(\varrho(\nabla u)^{T} ),
\]
we get 
\begin{align*}
\kappa\frac{\dd}{\dd t}\int_{\T^d}\left(-\varrho\nabla\ln\varrho\cdot \nabla\ln r+ \frac{1}{2}\varrho|\nabla\ln r|^2\right)\;\dd x =& -\kappa\int_{\T^d} \varrho u\otimes\nabla\ln\varrho:\nabla^2\ln r\;\dd x \\
&- \kappa\int_{\T^d} \varrho(\nabla u)^{T}:\nabla^2\ln r\;\dd x \\
&- \kappa\int_{\T^d}\varrho\nabla\ln\varrho\cdot\partial_t\nabla\ln r \;\dd x\dd s \\
&+ \kappa\int_{\T^d}\varrho u\otimes\nabla\ln r:\nabla^2\ln r\;\dd x \\
&+ \kappa\int_{\T^d} \varrho\nabla\ln r\cdot\partial_t\nabla\ln r\;\dd x.
\end{align*}
Therefore, using the energy estimate \eqref{energy},
\begin{align*}
\mathcal{E}(t)-\mathcal{E}(0) \leq & -\int_0^t\int_{\T^d} \varrho\mathbb{S}(\D u):\D u\;\dd x\dd s + \int_0^t\int_{\T^d}\varrho\mathbb{S}(\D u):\D v\;\dd x\dd s - \int_0^t\int_{\T^d} p\ddiv v\;\dd x\dd s \\
&+ \kappa\int_0^t\int_{\T^d} \varrho\nabla^2\ln\varrho:\nabla v\;\dd x\dd s \\
&+ \int_0^t\int_{\T^d} \varrho(v-u)\cdot\partial_t v\;\dd x\dd s + \int_0^t\int_{\T^d} \varrho u\otimes(v-u):\nabla v\;\dd x\dd s \\
&+ \kappa\int_0^t\int_{\T^d} \varrho(\nabla\ln r-\nabla\ln\varrho)\cdot\partial_t\nabla\ln r\;\dd x\dd s \\
&+ \kappa\int_0^t\int_{\T^d} \varrho u\otimes(\nabla\ln r-\nabla\ln\varrho):\nabla^2\ln r\;\dd x\dd s - \kappa\int_0^t\int_{\T^d} \varrho (\nabla u)^T:\nabla^2\ln r\;\dd x\dd s \\
&- \int_0^t\frac{\dd}{\dd t}\int_{\T^d}\left(H(r)+H'(r)(\varrho-r)\right)\;\dd x\dd s.
\end{align*}
Note that by virtue of monotonicity of $\mathbb{S}$, we have 
\begin{align*} -\varrho\mathbb{S}(\D u):\D u + \varrho\mathbb{S}(\D u):\D v =& -\varrho\left(\mathbb{S}(\D u)-\mathbb{S}(\D v)\right):(\D u-\D v) -\varrho\mathbb{S}(\D v):(\D u-\D v) \\
\leq & -\varrho\mathbb{S}(\D v):(\D u-\D v). \end{align*}

Finally, we end the proof by noticing that
\[ \frac{\dd}{\dd t}\int_{\T^d}\left(H(r)+H'(r)(\varrho-r)\right)\;\dd x = \int_{\T^d} \partial_t(H'(r))(\varrho-r)\;\dd x + \int_{\T^d} \varrho u\cdot\nabla(H'(r))\;\dd x. \]
\end{proof}

Assuming that $r>0$, we can further reformulate Proposition~\ref{prop:relative1}. 
\begin{prop}\label{prop_rel2}
If $(\varrho,u)$ is a (sufficiently regular) weak solution to \eqref{NSK}--\eqref{initialBC} satisfying the estimate \eqref{energy}, then 
\begin{equation*}
\begin{aligned} \mathcal{E}(t)-\mathcal{E}(0) \leq & \int_0^t\int_{\T^d} \varrho(u-v)\otimes(\nabla\ln\varrho-\nabla\ln r):\mathbb{S}(\D v)\;\dd x\dd s \\
&- \int_0^t\int \varrho(u-v)\otimes(u-v):\nabla v\;\dd x\dd s \\
&-\kappa\int_0^t\int_{\T^d} \varrho(\nabla\ln\varrho-\nabla\ln r)\otimes(\nabla\ln\varrho-\nabla\ln r):\nabla v\;\dd x\dd s \\
&- \int_0^t\int_{\T^d} \varrho(u-v)\cdot \left[\partial_t v + (v\cdot\nabla)v - \frac{1}{r}\ddiv(r\mathbb{S}(\D v)) + \frac{1}{r}\nabla p(r) -\kappa\frac{1}{r}\ddiv(r\nabla^2\ln r)\right]\;\dd x\dd s \\
&- \kappa\int_0^t\int_{\T^d} \varrho(\nabla\ln\varrho-\nabla \ln r)\cdot \left[\partial_t\nabla\ln r+(v\cdot\nabla)\nabla\ln r + \frac{1}{r}\ddiv(r(\nabla v)^T)\right]\;\dd x\dd s \\
&- \int_0^t\int_{\T^d} (\varrho-r)\bigg[\partial_tH'(r)+v\cdot\nabla H'(r) + p'(r)\ddiv v\bigg] \;\dd x\dd s \\
&- \int_0^t\int_{\T^d} (p(\varrho)-p(r)-p'(r)(\varrho-r))\ddiv v\;\dd x\dd s
\end{aligned}
\end{equation*}
for all $(r,v)$ smooth, $r>0$.
\end{prop}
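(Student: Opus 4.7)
The plan is to regroup the right-hand side of Proposition~\ref{prop:relative1} so as to rewrite it as integrals of $\varrho(u-v)$, $\varrho(\nabla\ln\varrho-\nabla\ln r)$ and $(\varrho-r)$ tested against, respectively, the momentum, Korteweg and mass-type residuals of $(r,v)$, plus two quadratic defect terms and the convex pressure remainder. Three elementary identities drive the whole calculation:
\[
\ddiv(\varrho A) = \frac{\varrho}{r}\ddiv(rA) + \varrho A(\nabla\ln\varrho - \nabla\ln r), \qquad \frac{1}{r}\nabla p(r) = \nabla H'(r), \qquad u = v + (u-v),
\]
the first being valid for any smooth matrix field $A$ whenever $r>0$, the second coming from $H''(r)=p'(r)/r$, and the third used to extract quadratic defects from the various advective contributions.

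The momentum--viscous--pressure part is fairly direct. Combining the two time/convective terms of Proposition~\ref{prop:relative1} yields $-\int\varrho(u-v)\cdot[\partial_t v + (v\cdot\nabla)v]$ plus the quadratic defect $-\int\varrho(u-v)\otimes(u-v):\nabla v$. An integration by parts applied to $-\int\varrho\mathbb{S}(\D v):\D(u-v)$ and followed by the first identity (with $A=\mathbb{S}(\D v)$) produces the viscous residual $\int\frac{\varrho}{r}(u-v)\cdot\ddiv(r\mathbb{S}(\D v))$ and the first term of the claim. For the pressure, I expand $p(\varrho)=p(r)+p'(r)(\varrho-r)+[p(\varrho)-p(r)-p'(r)(\varrho-r)]$, integrate by parts in $-\int p(r)\ddiv v=\int rv\cdot\nabla H'(r)$, and recombine with the $H'(r)$-terms from Proposition~\ref{prop:relative1} together with the new $-\int\frac{\varrho}{r}(u-v)\cdot\nabla p(r)$ piece extracted from the momentum residual; the algebra collapses these into the block $-\int(\varrho-r)[\partial_t H'(r)+v\cdot\nabla H'(r)+p'(r)\ddiv v]$.

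The main technical step is the reorganisation of the four capillarity terms. Splitting $u=v+(u-v)$ in $\kappa\int\varrho u\otimes(\nabla\ln r-\nabla\ln\varrho):\nabla^2\ln r$ and using symmetry of the Hessian turns the $v$-part into $-\kappa\int\varrho(\nabla\ln\varrho-\nabla\ln r)\cdot(v\cdot\nabla)\nabla\ln r$, while the leftover $(u-v)$-piece, combined with the $(\nabla(u-v))^T$ contribution coming from the $-\kappa\int\varrho(\nabla u)^T:\nabla^2\ln r$ term, integrates by parts (again via the first identity, now with $A=\nabla^2\ln r$) into $\kappa\int\frac{\varrho}{r}(u-v)\cdot\ddiv(r\nabla^2\ln r)$. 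After that reduction, what remains is $\kappa\int\varrho\nabla^2\ln\varrho:\nabla v$ together with $-\kappa\int\varrho(\nabla v)^T:\nabla^2\ln r=-\kappa\int\varrho\nabla^2\ln r:\nabla v$ (using $A:B=A:B^T$ for symmetric $A$), i.e.\ $\kappa\int\varrho\nabla^2(\ln\varrho-\ln r):\nabla v$, to which I apply the Hessian integration-by-parts identity
\[
\int\varrho\nabla^2 f:\nabla v = -\int\varrho(\nabla f\otimes\nabla\ln\varrho):\nabla v - \int\varrho\nabla f\cdot\nabla\ddiv v.
\]
Writing $\nabla\ln\varrho=\nabla\ln r+(\nabla\ln\varrho-\nabla\ln r)$ then isolates the quadratic capillary defect $-\kappa\int\varrho(\nabla\ln\varrho-\nabla\ln r)\otimes(\nabla\ln\varrho-\nabla\ln r):\nabla v$, while the residual cross terms match $-\kappa\int\frac{\varrho}{r}(\nabla\ln\varrho-\nabla\ln r)\cdot\ddiv(r(\nabla v)^T)$ via the expansion $\frac{1}{r}\ddiv(r(\nabla v)^T) = (\nabla v)^T\nabla\ln r + \nabla\ddiv v$. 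The delicate aspect throughout is the consistent use of the paper's $\otimes$-convention and of the identity $A:B=A:B^T$ for symmetric $A$, which is what makes the distinction between $\nabla(\cdot)$ and $(\nabla(\cdot))^T$ harmless in contractions against the symmetric Hessians and stress tensor.
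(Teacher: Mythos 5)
Your proposal is correct and takes essentially the same route as the paper: starting from Proposition~\ref{prop:relative1}, integrating by parts with the rewriting $\ddiv(\varrho A)=\frac{\varrho}{r}\ddiv(rA)+\varrho A(\nabla\ln\varrho-\nabla\ln r)$, splitting $u=v+(u-v)$ and $\nabla\ln\varrho=\nabla\ln r+(\nabla\ln\varrho-\nabla\ln r)$, and using $rH''(r)=p'(r)$ to collapse the pressure block into the $\mathcal{A}_3$ term, the $\frac1r\nabla p(r)$ contribution and the convex remainder. The only deviation is bookkeeping order in the capillarity part (you merge $I_3$ with the $\nabla v$-piece of $I_8$ and apply one Hessian integration by parts, while the paper processes $I_3$ alone and collapses $I_7+I_8$ with the full $u$); the resulting terms are identical.
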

\begin{proof}
By Proposition~\ref{prop:relative1}, we have
\[ \mathcal{E}(t)-\mathcal{E}(0) \leq \int_0^t\sum_{i=1}^9 I_i\;\dd s. \]
First, by integration by parts we rewrite the terms $I_1$, $I_3$, $I_7$ and $I_8$. We have
\begin{align*} I_1 = -\int_{\T^d} \varrho\mathbb{S}(\D v):(\D u - \D v)\;\dd x = & \int_{\T^d} \varrho(u-v)\cdot\ddiv\mathbb{S}(\D v)\;\dd x \\
&+ \int_{\T^d} \varrho(u-v)\otimes\nabla\ln\varrho:\mathbb{S}(\D v)\;\dd x \\
=& \int_{\T^d} \varrho(u-v)\cdot\frac{1}{r}\ddiv(r\mathbb{S}(\D v))\;\dd x \\
&+ \int_{\T^d} \varrho(u-v)\otimes(\nabla\ln\varrho-\nabla\ln r):\mathbb{S}(\D v)\;\dd x,
\end{align*}

\begin{align*}
I_3 =& -\kappa\int_{\T^d} \nabla\ln\varrho\otimes\nabla\ln\varrho:\nabla v \;\dd x - \kappa\int_{\T^d} \varrho\nabla\ln\varrho\cdot\nabla\ddiv v\;\dd x \\
=& -\kappa\int_{\T^d} \varrho (\nabla\ln\varrho-\nabla\ln r)\otimes\nabla\ln\varrho :\nabla v \;\dd x - \kappa\int_{\T^d} \frac{\varrho}{r}\nabla\ln\varrho\cdot\ddiv(r(\nabla v)^T) \;\dd x \\
=& -\kappa\int_{\T^d} \varrho(\nabla\ln\varrho-\nabla\ln r)\otimes\nabla\ln\varrho:\nabla v\;\dd x \\
&- \kappa\int_{\T^d} \frac{\varrho}{r}(\nabla\ln\varrho-\nabla\ln r)\cdot\ddiv(r(\nabla v)^T)\;\dd x - \kappa\int_{\T^d} \frac{\varrho}{r}\nabla\ln r\cdot\ddiv(r(\nabla v)^T)\;\dd x \\
=& -\kappa\int_{\T^d} \varrho(\nabla\ln\varrho-\nabla\ln r)\otimes(\nabla\ln\varrho-\nabla\ln r):\nabla v\;\dd x \\
&- \kappa\int_{\T^d} \frac{\varrho}{r}(\nabla\ln\varrho-\nabla\ln r)\cdot\ddiv(r(\nabla v)^T)\;\dd x + \kappa\int_{\T^d} \varrho\nabla^2\ln r:\nabla v\;\dd x \\
=& -\kappa\int_{\T^d} \varrho(\nabla\ln\varrho-\nabla\ln r)\otimes(\nabla\ln\varrho-\nabla\ln r):\nabla v\;\dd x \\
&- \kappa\int_{\T^d} \frac{\varrho}{r}(\nabla\ln\varrho-\nabla\ln r)\cdot\ddiv(r(\nabla v)^T)\;\dd x \\
&- \kappa\int_{\T^d} \varrho v\otimes(\nabla\ln\varrho-\nabla\ln r):\nabla^2\ln r\;\dd x -\kappa\int_{\T^d} \frac{\varrho}{r}v\cdot\ddiv(r\nabla^2\ln r)\;\dd x,
\end{align*}
where we note that
\[
- \kappa\int_{\T^d} \varrho v\otimes(\nabla\ln\varrho-\nabla\ln r):\nabla^2\ln r\;\dd x =- \kappa\int_{\T^d} \varrho (\nabla\ln\varrho-\nabla\ln r)\cdot \Big((v\cdot \nabla) \nabla\ln r \Big)\;\dd x .
\]
Furthermore,
\[
I_5= -\int_{\T^d} \varrho u\otimes(u-v):\nabla v\;\dd x = -\int_{\T^d} \varrho (u-v) \otimes(u-v):\nabla v\;\dd x - \int_{\T^d} \varrho (u-v)\cdot \Big( (v\cdot \nabla ) v \Big)\;\dd x.
\]

For $I_8$ we have
\begin{align*}
I_8=-\kappa\int_{\T^d} \varrho(\nabla u)^T:\nabla^2\ln r\;\dd x =& \kappa\int_{\T^d} \varrho u\otimes(\nabla\ln\varrho-\nabla\ln r):\nabla^2\ln r\;\dd x + \kappa\int_{\T^d} \frac{\varrho}{r}u\cdot\ddiv(r\nabla^2\ln r)\;\dd x
\end{align*}
and thus together with $I_7$ it gives
\[ I_7+I_8 = \kappa\int_{\T^d} \frac{\varrho}{r}u\cdot\ddiv(r\nabla^2\ln r)\;\dd x. \]

In the end, combining $I_1$, $I_3$, $I_4$, $I_5$, $I_6$, $I_7$ and $I_8$, we get

\begin{align*} \mathcal{E}(t)-\mathcal{E}(0) \leq & \int_0^t\int_{\T^d} \varrho(u-v)\otimes(\nabla\ln\varrho-\nabla\ln r):\mathbb{S}(\D v)\;\dd x\dd s \\
&- \int_0^t\int_{\T^d} \varrho(u-v)\otimes(u-v):\nabla v\;\dd x\dd s \\
&-\kappa\int_0^t\int_{\T^d} \varrho(\nabla\ln\varrho-\nabla\ln r)\otimes(\nabla\ln\varrho-\nabla\ln r):\nabla v\;\dd x\dd s \\
&- \int_0^t\int_{\T^d} \varrho(u-v)\cdot \left[\partial_t v + (v\cdot\nabla)v - \frac{1}{r}\ddiv(r\mathbb{S}(\D v))-\kappa\frac{1}{r}\ddiv(r\nabla^2\ln r)\right]\;\dd x\dd s \\
&- \kappa\int_0^t\int_{\T^d} \varrho(\nabla\ln\varrho-\ln r)\cdot \left[\partial_t\nabla\ln r+(v\cdot\nabla)\nabla\ln r + \frac{1}{r}\ddiv(r(\nabla v)^T)\right]\;\dd x\dd s \\
&+ I_2 + I_9.
\end{align*}

Furthermore, using the fact that $rH''(r)=p'(r)$, we can rewrite $I_2+I_9$ as
\begin{align*}
 I_2 + I_9 =& -\int_{\T^d} \partial_t(H'(r))(\varrho-r) + \varrho u\cdot\nabla(H'(r)) + p(\varrho)\ddiv v\;\dd x \\
 =& -\int_{\T^d} (\varrho-r)\left[\partial_t H'(r) + v\cdot\nabla H'(r) + p'(r)\ddiv v\right] \;\dd x \\
 &- \int_{\T^d} \varrho(u-v)\cdot\nabla (H'(r))\;\dd x - \int_{\T^d} rv\cdot\nabla(H'(r)) + p(\varrho)\ddiv v - p'(r)(\varrho-r)\ddiv v \;\dd x \\
 =& -\int_{\T^d} (\varrho-r)\left[\partial_t H'(r) + v\cdot\nabla H'(r) + p'(r)\ddiv v\right] \;\dd x \\
 &- \int_{\T^d} \varrho(u-v)\cdot\frac{1}{r}\nabla p(r)\;\dd x - \int_{\T^d} \left(p(\varrho) - p(r)- p'(r)(\varrho-r)\right)\ddiv v \;\dd x
\end{align*}

Combining it with the remaining terms, we reach the desired conclusion.
\end{proof}

A direct consequence of Proposition~\ref{prop_rel2} is the following
\begin{cor}
For a weak solution $(\varrho,u)$ to \eqref{NSK}--\eqref{initialBC}, for any smooth $(r,v)$, $r>0$, the relative entropy satisfies the inequality 
\begin{equation}\label{relative_gronwall} \mathcal{E}(t)\leq \mathcal{E}(0)e^{Ct} + \int_0^t b(s)e^{C(t-s)}\;\dd s,
\end{equation}
where $C$ depends only on $\min r$, $\max r$ and the $L^\infty$ norm of $\nabla v$, and $b$ is given by
\begin{align*} b(t) =& -\int_{\T^d} \varrho(u-v)\cdot\mathcal{A}_1\;\dd x - \kappa\int_{\T^d} \varrho(\nabla\ln\varrho-\nabla\ln r)\cdot\mathcal{A}_2\;\dd x - \int_{\T^d} (\varrho-r) \mathcal{A}_3\;\dd x, \end{align*}
for
\begin{equation}\label{A}
\begin{aligned}
 \mathcal{A}_1 & = \partial_t v + (v\cdot\nabla)v - \frac{1}{r}\ddiv(r\mathbb{S}(\D v)) +\frac{1}{r}\nabla p(r)-\kappa\frac{1}{r}\ddiv(r\nabla^2\ln r), \\
 \mathcal{A}_2 & = \partial_t\nabla\ln r+(v\cdot\nabla)\nabla\ln r + \frac{1}{r}\ddiv(r(\nabla v)^T), \\
 \mathcal{A}_3 &= \partial_tH'(r)+v\cdot\nabla H'(r) + p'(r)\ddiv v.
\end{aligned}
\end{equation}
\end{cor}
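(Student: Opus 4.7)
The plan is to begin from the estimate established in Proposition~\ref{prop_rel2} and decompose its right-hand side into a ``forcing'' piece, equal to $b(t)$, plus four residual quadratic terms that can be absorbed by $\mathcal{E}(s)$ itself. By comparing the three bracketed expressions on the right-hand side of Proposition~\ref{prop_rel2} pairing $\varrho(u-v)$, $\kappa\varrho(\nabla\ln\varrho-\nabla\ln r)$, and $(\varrho-r)$ with the definitions in \eqref{A}, these three integrals immediately reassemble into $b(t)$. No computation is needed for this identification.

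The real task is to majorise each of the four remaining quadratic terms by $C\mathcal{E}(s)$ for a constant $C$ depending on $\min r$, $\max r$, $\|\nabla v\|_{L^\infty}$ (and implicitly on $\kappa$ through the weighted Young inequality below). The two ``diagonal'' terms $-\int_{\T^d}\varrho(u-v)\otimes(u-v):\nabla v$ and $-\kappa\int_{\T^d}\varrho(\nabla\ln\varrho-\nabla\ln r)\otimes(\nabla\ln\varrho-\nabla\ln r):\nabla v$ are handled by pulling $\|\nabla v\|_{L^\infty}$ out of the integral and recognising, respectively, the two kinetic pieces of $\mathcal{E}$. The cross term $\int_{\T^d}\varrho(u-v)\otimes(\nabla\ln\varrho-\nabla\ln r):\mathbb{S}(\D v)$ is controlled by Cauchy--Schwarz together with a $\kappa$-weighted Young inequality to split the product between $\varrho|u-v|^2$ and $\kappa\varrho|\nabla\ln\varrho-\nabla\ln r|^2$, the resulting prefactor $\|\mathbb{S}(\D v)\|_{L^\infty}$ being finite by \eqref{S1} and smoothness of $v$. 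The main subtlety is the pressure Bregman remainder $-\int_{\T^d}(p(\varrho)-p(r)-p'(r)(\varrho-r))\ddiv v$, which I would dominate using the pointwise bound
\[ |p(\varrho)-p(r)-p'(r)(\varrho-r)| \leq C(\min r,\max r)\,H(\varrho|r); \]
this follows from Taylor's theorem together with $H''(\varrho)=p'(\varrho)/\varrho$ and the growth condition \eqref{P2}, by comparing the two remainders near $\varrho=r$ (both behave like $(\varrho-r)^2$ with explicitly computable leading coefficients) and for $\varrho$ large (both scale like $\varrho^\gamma$). This is the only step which genuinely uses the structural properties of $p$ and is therefore the main point to handle carefully.

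Collecting these four bounds yields the integral inequality
\[ \mathcal{E}(t) \leq \mathcal{E}(0) + b(t) + C\int_0^t\mathcal{E}(s)\,\dd s, \]
to which the integral form of Gronwall's lemma applies with source term $f(s)=\mathcal{E}(0)+b(s)$, giving $\mathcal{E}(t)\leq f(t)+C\int_0^t f(s)\,e^{C(t-s)}\,\dd s$. The elementary evaluation $C\mathcal{E}(0)\int_0^t e^{C(t-s)}\,\dd s=\mathcal{E}(0)(e^{Ct}-1)$ then collapses this into exactly \eqref{relative_gronwall}.
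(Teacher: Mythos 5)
Your argument is correct and follows essentially the same route as the paper: you identify the three bracketed forcing terms of Proposition~\ref{prop_rel2} with $b(t)$, absorb the quadratic terms using $\|\nabla v\|_{L^\infty}$ and $\|\mathbb{S}(\D v)\|_{L^\infty}$ (via \eqref{S1}), control the pressure Bregman remainder by $H(\varrho|r)$ exactly as in the paper's Proposition~\ref{P_prop}, and conclude with the integral form of Gronwall's lemma. The only small caveat is that your sketch of the pointwise bound $|p(\varrho)-p(r)-p'(r)(\varrho-r)|\leq C(\min r,\max r)\,H(\varrho|r)$ should also treat the regime $\varrho\leq \min r/2$, where the two sides are not compared quadratically (the assumptions control $p'$ but not $p''$ near $0$); there one bounds the pressure remainder by a constant and uses that $H(\varrho|r)$ is bounded below by a positive constant, as in Case~1 of the paper's appendix proof.
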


\begin{proof}
 Thanks to the assumptions on $p$, putting $\ushort{r}=\min r(t,x)$ and $\bar{r}=\max r(t,x)$ from Proposition~\ref{P_prop} we have
 \[ \left|\int_0^t\int_{\T^d} (p(\varrho)-p(r)-p'(r)(\varrho-r))\ddiv v\;\dd x\dd s\right| \leq C(\ushort{r},\bar{r})\|\ddiv v\|_{L^\infty}\int_0^t\int_{\T^d} H(\varrho|r)\;\dd x\dd s. \]
 Therefore from Proposition~\ref{prop_rel2}, using the Young inequality, we get
 \[\begin{aligned} \mathcal{E}(t)-\mathcal{E}(0) \leq & \frac{\|\mathbb{S}(\D v)\|_{L^\infty}}{2}\int_0^t\int_{\T^d} \varrho|u-v|^2 + \varrho|\nabla\ln\varrho-\nabla\ln r|^2\;\dd x\dd s \\
 &+ \|\nabla v\|_{L^\infty}\int_0^t\int_{\T^d} \varrho|u-v|^2 + \kappa\varrho|\nabla\ln\varrho-\nabla\ln r|^2\;\dd x\dd s + C\|\ddiv v\|_{L^\infty}\int_0^t\int_{\T^d} H(\varrho|r)\;\dd x\dd s \\
 &- \int_0^t\int_{\T^d} \varrho(u-v)\cdot\mathcal{A}_1\;\dd x\dd s - \kappa\int_0^t\int_{\T^d} \varrho(\nabla\ln\varrho-\nabla\ln r)\cdot\mathcal{A}_2\;\dd x\dd s \\
 &- \int_0^t\int_{\T^d} (\varrho-r)\cdot\mathcal{A}_3\;\dd x\dd s \\
 \leq & C(\ushort{r},\bar{r},\|\nabla v\|_{L^\infty})\int_0^t \mathcal{E}(s)\;\dd s + \int_0^t b(s)\;\dd s,
 \end{aligned}\]
 and the assertion follows from Gronwall's lemma.
\end{proof}

Based on Inequality \eqref{relative_gronwall}, we can finally formulate the definition of a dissipative solution.

\begin{de}\label{def:dissipative}
We say that $(\varrho,u)$ is a dissipative solution to \eqref{NSK}--\eqref{initialBC} if: 
\begin{enumerate}
 \item $\rho \ge 0$ and $\displaystyle\int_{\T^d} \rho\;\dd x = \int_{\T_d} \rho_0\;\dd x = M >0, $
 \item $\sqrt\varrho\in L^\infty(0,T;L^{2\gamma})\cap L^\infty(0,T;H^1)$, $\sqrt{\varrho}u\in L^\infty(0,T;L^2)$,
 \item The relative entropy
$ \mathcal{E}(t)$ defined by \eqref{Rel} satisfies Inequality \eqref{relative_gronwall}, 
with $\mathcal{A}_1$--$\mathcal{A}_3$ given by \eqref{A}, for all $(r,v)$ satisfying
\[ r\in L^\infty([0,T]\times\T^d), \quad \nabla r\in L^\infty(0,T; L^3), \quad r\geq c>0, \]
\[ v\in L^\infty(0,T;L^3), \quad \nabla v\in L^\infty(0,T;L^\infty), \]
\[ \mathcal{A}_1(r,v),\mathcal{A}_2(r,v)\in L^1(0,T;L^3), \quad \mathcal{A}_3(r,v)\in L^1(0,T;L^{3/2}). \]
\end{enumerate}
\end{de}
Lets us recall that $\gamma>1$ was introduced in the assumptions on the pressure \eqref{P2}.
Another straightforward corollary from \eqref{relative_gronwall} is the weak-strong uniqueness:
\begin{cor}
If $(\varrho,u)$ is a dissipative solution to \eqref{NSK} in the sense of Definition~\ref{def:dissipative} and a $(r,v)$ is a sufficiently regular strong solution to (\ref{NSK}) starting from the same initial conditions, then 
\[ \varrho=r \quad \text{and} \quad u=v. \]
\end{cor}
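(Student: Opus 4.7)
The plan is to apply the relative entropy inequality \eqref{relative_gronwall} directly with the strong solution $(r,v)$ used as the pair of test functions. The key observation is that when $(r,v)$ is itself a solution to \eqref{NSK} with $r>0$ and sufficient regularity (so that the regularity required in Definition~\ref{def:dissipative} holds), then each of the residual quantities $\mathcal{A}_1,\mathcal{A}_2,\mathcal{A}_3$ defined in \eqref{A} vanishes identically.

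To verify $\mathcal{A}_1=0$: use the continuity equation $\partial_t r+\ddiv(rv)=0$ to rewrite the momentum equation for $(r,v)$ in its nonconservative form, namely $r\bigl[\partial_t v+(v\cdot\nabla)v\bigr]-\ddiv(r\mathbb{S}(\D v))+\nabla p(r)-\kappa\ddiv(r\nabla^2\ln r)=0$, and divide by $r$. For $\mathcal{A}_3=0$: differentiating $H$ gives $H''(r)=p'(r)/r$, and using $\partial_t r=-v\cdot\nabla r-r\ddiv v$ one obtains $\partial_t H'(r)+v\cdot\nabla H'(r)=-rH''(r)\ddiv v=-p'(r)\ddiv v$, which cancels the last term in $\mathcal{A}_3$. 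For $\mathcal{A}_2=0$: take $\nabla\ln$ of the continuity equation to obtain $\partial_t\nabla\ln r+(v\cdot\nabla)\nabla\ln r+(\nabla v)^T\nabla\ln r+\nabla\ddiv v=0$, then notice that the Korteweg-type term rewrites as $\frac{1}{r}\ddiv(r(\nabla v)^T)=(\nabla v)^T\nabla\ln r+\nabla\ddiv v$ (since $[\ddiv((\nabla v)^T)]_i=\partial_i\ddiv v$), which matches.

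Consequently $b(t)\equiv 0$ in the bound \eqref{relative_gronwall}. Since $(\varrho,u)$ and $(r,v)$ share the same initial data, the identity $\mathcal{E}(0)=0$ follows directly from the definition \eqref{Rel} of the relative entropy (the velocity and density terms vanish, and $H(\varrho_0|\varrho_0)=0$). Plugging into \eqref{relative_gronwall} yields $\mathcal{E}(t)\leq 0$ for every $t\in[0,T]$, and since each of the three contributions to $\mathcal{E}(t)$ is pointwise nonnegative, all three must vanish. The term $\int H(\varrho|r)\,\dd x=0$ combined with strict convexity of $H$ (recall $H''>0$ from \eqref{P1}) gives $\varrho=r$ almost everywhere, and because $r\geq c>0$ by assumption, $\varrho>0$ everywhere, so $\int\varrho|u-v|^2\,\dd x=0$ forces $u=v$ almost everywhere.

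The only substantive point to check is that the strong solution $(r,v)$ actually lies in the class of admissible test pairs in Definition~\ref{def:dissipative} (boundedness of $r$ from above and away from zero, $\nabla r\in L^\infty_tL^3_x$, $v\in L^\infty_tL^3_x$, $\nabla v\in L^\infty_{t,x}$, and the integrability of $\mathcal{A}_1,\mathcal{A}_2,\mathcal{A}_3$, which is automatic once one has shown they are zero). This is essentially the meaning of ``sufficiently regular strong solution,'' so no real obstacle arises; the remainder of the argument is the algebraic identities above and the Gronwall-type inequality inherited from the corollary. No additional machinery beyond Proposition~\ref{prop_rel2} is needed.
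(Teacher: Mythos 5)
Your proposal is correct and follows essentially the same route as the paper: plug the strong solution $(r,v)$ into the relative entropy inequality \eqref{relative_gronwall}, observe $\mathcal{A}_1=\mathcal{A}_2=\mathcal{A}_3=0$ so that $b\equiv 0$, use $\mathcal{E}(0)=0$ to get $\mathcal{E}(t)\leq 0$, and conclude from nonnegativity and strict convexity of $H$. Your explicit verification of the vanishing of the $\mathcal{A}_i$ and your ordering (first $\varrho=r\geq c>0$, then $u=v$ from $\int\varrho|u-v|^2\,\dd x=0$) merely spell out details the paper leaves implicit.
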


\begin{proof}
If $(r,v)$ is a strong solution, then $\mathcal{A}_i=0$ for $i=1,\dots,3$. Therefore by virtue of \eqref{relative_gronwall},
\[ \mathcal{E}(t)\leq \mathcal{E}(0)e^{Ct}=0. \]
In conclusion $u=v$ and $\nabla\ln\varrho=\nabla\ln r$ on $\{\varrho>0\}$. Moreover,
\[ H(\varrho)-H(r)-H'(r)(\varrho-r) = 0 \]
which, since $H$ is strictly convex, implies that $\varrho=r$.
\end{proof}

\section{Construction of solutions - approximation scheme}\label{ex_section}

In order to prove Theorem~\ref{th_main}, we will construct a weak solution to the regularized system, which reads
\begin{equation}\label{full_approx}
 \begin{aligned}
 &\partial_t\varrho + \ddiv(\varrho u) = \varepsilon\Delta\varrho,\\
 &\partial_t(\varrho u) + \ddiv(\varrho u\otimes u) - \ddiv(\varrho\mathbb{S}(\D u)) \\
 & \hskip3cm + \nabla p(\varrho) - \kappa\ddiv(\varrho\nabla^2\ln\varrho) = -\varepsilon\nabla\varrho\cdot\nabla u + \nu\ddiv\left(|\D u|^{q-2}\D u\right),
 \end{aligned}
\end{equation}
with
\begin{equation}\label{q}
 q > \max\left(3,\frac{3p}{2}\right),
\end{equation}
where $p\geq 1$ is related to the assumption \eqref{S1} on $\mathbb{S}$ with the initial boundary condition
\begin{equation}\label{BCApp}
 \rho\vert_{t=0} = \rho_0, \qquad 
 (\rho u)\vert_{t=0} = \rho_0 u_0.
\end{equation}

The term $\nu\ddiv(|\D u|^{q-2}\D u)$ for large $q$ provides higher integrability in time and space of $\D u$. This enables us to use $u$ as a test function, which in turn allows us to use the monotonicity method to pass to the limit with the nonlinear stress $\mathbb{S}$. Note that this quantity corresponds to certain fluids which may be described in a better way using polynomial, rather than linear, growth condition on the strain tensor: See for instance the book by J. M\'alek, J. Ne\v{c}as, M. Rokyta and M. Ruzicka \cite{MaNeRoRu}. Note that for $q=3$, it corresponds to the classical Smagorinsky turbulent model, mathematically studied (also for large range of $q$) in the incompressible setting in particular by O. Ladyzhenskaya \cite{Ladyzhenskaya1967}.
The term $\varepsilon\Delta\varrho$ provides a parabolic structure of the mass equation, which is useful to have better regularity and estimates on $\rho$ from below and above. It is a well known approximate term in the compressible setting, which also may be seen as encoding a property of mass diffusion for certain fluids.

At the level of the regularized system, we are able to show the existence of a weak solution:

\begin{theo}\label{th_approx}
Let $(\varrho_0,u_0)$ satisfy \eqref{initial} and $\mathbb{S}$ and $p$ satisfy \eqref{S1}-\eqref{S2} and \eqref{P1}-\eqref{P2}. Then, there exists a global weak (distributional) solution $(\varrho,u)$ to \eqref{full_approx}--\eqref{BCApp} which for all $T>0$ satisfies the energy inequality:
 \begin{equation}\label{energy_app}
 \begin{aligned}
 \sup_{t\in[0,T]}E(t) + & \int_0^T\!\!\!\int_{\T^d}\varrho\mathbb{S}(\D u):\D u\;\dd x\dd t \\
 &+ \nu\int_0^T\!\!\!\int_{\T^d}|\D u|^q\;\dd x\dd t +\kappa\varepsilon\int_0^T\!\!\int_{\T^d}\varrho|\nabla^2\ln\varrho|^2\;\dd x\dd t \leq E(0)
 \end{aligned}
 \end{equation}
 for $E$ defined in \eqref{def:Energy}.
\end{theo}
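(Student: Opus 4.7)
The plan is to construct the solution via a two-level Faedo--Galerkin scheme: an inner level where the mass equation is treated as a linear parabolic PDE driven by a prescribed velocity, and an outer finite-dimensional Galerkin projection for the momentum equation. Let $\{w_k\}_{k\ge 1}$ be a smooth basis of $W^{1,q}(\T^d)^d$ (eigenfunctions of $-\Delta$, for instance), set $X_n=\mathrm{span}(w_1,\dots,w_n)$, and mollify the initial density so that $\sqrt{\rho_{0,\delta}}\to\sqrt{\rho_0}$ in $H^1$ with $\rho_{0,\delta}\ge \delta>0$; the parameter $\delta$ will be removed at the end of the construction. Given $u_n\in C([0,T];X_n)$, the mass equation $\partial_t\rho + \ddiv(\rho u_n)=\varepsilon\Delta\rho$ with datum $\rho_{0,\delta}$ admits a unique strong solution $\rho_n=\mathcal{S}(u_n)$ by standard parabolic theory, and the maximum principle delivers positive lower and upper bounds $\underline{\rho}_n\leq \rho_n\leq \overline{\rho}_n$ depending only on $\|u_n\|_{L^1_t(L^\infty_x)}$ and on $\delta$.

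Plugging $\rho_n$ into the momentum equation and projecting onto $X_n$ produces an ODE system for the coefficients of $u_n$; the positivity of $\rho_n$ makes the mass matrix $M_{ij}(t)=\int_{\T^d}\rho_n w_i\cdot w_j\,\dd x$ invertible, so Cauchy--Lipschitz gives local existence. Testing this Galerkin equation with $u_n$ itself (admissible since $u_n\in X_n$) and combining with the mass equation, three crucial cancellations produce the energy identity: the artificial drift $-\varepsilon\nabla\rho\cdot\nabla u$ compensates the extra term $\tfrac{\varepsilon}{2}\int|u_n|^2\Delta\rho_n$ generated by the parabolic regularization when one rewrites $\int\partial_t(\rho_n u_n)\cdot u_n$; the pressure gradient yields $\tfrac{\dd}{\dd t}\int H(\rho_n) + \varepsilon\int H''(\rho_n)|\nabla\rho_n|^2$; and the BD-type manipulation of the Korteweg stress (cf.~\cite{BD2,BrGiLa} and Lemma~\ref{lem:Korteweg}) generates $\tfrac{\kappa}{2}\tfrac{\dd}{\dd t}\int\rho_n|\nabla\ln\rho_n|^2$ together with the positive dissipation $\kappa\varepsilon\int\rho_n|\nabla^2\ln\rho_n|^2$. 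Combined with the dissipations $\int\rho_n\mathbb{S}(\D u_n):\D u_n$ and $\nu\int|\D u_n|^q$, this is exactly \eqref{energy_app} at the Galerkin level (in fact as an equality). It supplies the a priori bounds needed to prolong the local solution globally on $[0,T]$.

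The main obstacle is the passage to the limit $n\to\infty$ (and subsequently $\delta\to 0$). The energy bound gives $\sqrt{\rho_n}\in L^\infty(0,T;H^1)$, $\rho_n\in L^\infty(0,T;L^\gamma)$, $u_n\in L^q(0,T;W^{1,q})$ and $\sqrt{\rho_n}u_n\in L^\infty(0,T;L^2)$; using the equations to control $\partial_t\rho_n$ and $\partial_t(\rho_n u_n)$ in negative Sobolev norms, Aubin--Lions yields strong convergence of $\rho_n$ and of $\sqrt{\rho_n}u_n$ in $L^2_{t,x}$, which handles the pressure $p(\rho_n)$, the convective product $\rho_n u_n\otimes u_n$, the Korteweg contribution $\rho_n\nabla^2\ln\rho_n$, and the compensating term $\varepsilon\nabla\rho_n\cdot\nabla u_n$. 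The truly delicate point is identifying the weak limits of the nonlinear stresses $\rho_n\mathbb{S}(\D u_n)$ and $|\D u_n|^{q-2}\D u_n$: the choice $q>\max(3,3p/2)$ is tailored so that all the products are uniformly integrable in space-time, and Minty's monotonicity method, based on \eqref{S2} and on the fact that the limit energy inequality pins down the correct $\limsup$ of $\int\rho_n\mathbb{S}(\D u_n):\D u_n$, then supplies the identification (using the strong convergence of $\rho_n$ to factor it out of the monotonicity inequality before letting $n\to\infty$). A final approximation argument $\delta\to 0$ recovers the general initial datum satisfying \eqref{initial}.
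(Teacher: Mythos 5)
Your construction follows the paper's scheme almost step for step: the mass equation is solved as a linear parabolic problem at fixed velocity with a mollified, strictly positive initial density, the momentum equation is projected on a finite-dimensional space, the same three cancellations (the drift $-\varepsilon\nabla\varrho\cdot\nabla u$ against $\tfrac{\varepsilon}{2}\int|u|^2\Delta\varrho$, the pressure against $\varepsilon\int H''(\varrho)|\nabla\varrho|^2$, and the BD/Korteweg manipulation producing $\kappa\varepsilon\int\varrho|\nabla^2\ln\varrho|^2$) give the Galerkin energy identity (the paper's \eqref{dt_energy}), and Aubin--Lions handles $\sqrt{\varrho_N}$, $p(\varrho_N)$ and $\varrho_N u_N$. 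One minor looseness: since $\varrho_N=S_N(u_N)$ depends on the whole history of $u_N$, the projected system is not a plain ODE, so ``Cauchy--Lipschitz'' is not quite the right invocation; the paper instead sets up a Schauder fixed point for the operator built from $\mathfrak{M}[\varrho]^{-1}$. This is repairable and not the main issue.

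The genuine gap is in the identification of the weak limits $\overline{\mathbb{S}(\D u)}$ and $\overline{|\D u|^{q-2}\D u}$. You claim that ``the limit energy inequality pins down the correct $\limsup$ of $\int\varrho_n\mathbb{S}(\D u_n):\D u_n$'', but the energy inequality inherited by weak lower semicontinuity only bounds the weak limits $\overline{\mathbb{S}(\D u):\D u}$ and $\overline{|\D u|^q}$ by the initial energy; to close any monotonicity argument you must compare them with $\varrho\,\overline{\mathbb{S}(\D u)}:\D u$ and $\overline{|\D u|^{q-2}\D u}:\D u$, i.e.\ you need an energy balance for the limit itself, obtained by using $u$ as a test function in the limit momentum equation \eqref{weak_momentum}. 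This is precisely the technical heart of the paper's Step 3 and the raison d'\^etre of the $\nu$-term with $q>\max(3,3p/2)$: since $u$ is not admissible as such, the paper tests with the doubly mollified $(\psi u_\eta)_\eta$, kills the commutators with Friedrichs' lemma, and needs the nontrivial identity of Lemma~\ref{syfny_lemat} to recast the $\varepsilon$--Korteweg cross terms as $4\varepsilon\kappa\int|\nabla^2\sqrt{\varrho}-4\nabla\varrho^{1/4}\otimes\nabla\varrho^{1/4}|^2$, matching the term surviving in the localized limit inequality \eqref{energy_local}. Only after subtracting the resulting identity \eqref{energy_loc_limit2} does monotonicity enter, and there the paper does not use Minty/hemicontinuity at all: the quantitative strict monotonicity of $A\mapsto|A|^{q-2}A$ (Lemma~\ref{q_lem}) turns the vanishing defect into strong convergence $\D u_N\to\D u$ in $L^q$, which identifies both nonlinear terms simultaneously and also allows the weak limits in the energy inequality to be replaced by the true dissipations, yielding \eqref{energy_app}. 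Your weighted-Minty variant (using \eqref{S2} and the strong convergence of $\varrho_n$) can be made to work on $\{\varrho>0\}$ once the defect of the $\mathbb{S}$-term is isolated, but it still rests entirely on the testing/commutator step that your sketch omits; as written, the argument has no mechanism producing the required $\limsup$ comparison.
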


\begin{rem}
Integrating the continuity equation it is easy to see that the mass is conserved, namely
\[ \int_{\T^d} \varrho(t,\cdot)\;\dd x = \int_{\T^d}\varrho_0\;\dd x <\infty. \]
Moreover, as a consequence of \eqref{energy_app}, $(\varrho,u)$ satisfies in particular the following regularity:
\begin{enumerate}
 \item $\sqrt{\varrho}u\in L^\infty(0,T;L^2)$, $u\in L^q(0,T;W^{1,q})$, $\varrho^{1/p}\D u\in L^p([0,T]\times\T^d)$,
 \item $\sqrt{\varrho}\in L^\infty(0,T;H^1)$, $\varrho\in L^\infty(0,T;L^\gamma)$.
\end{enumerate}
The first estimates are a direct consequence of \eqref{def:Energy}, \eqref{energy_app} and the assumption \eqref{S1} on $\mathbb{S}$, where for the $W^{1,q}$ regularity of $u$ we use the Korn's inequality and a special version of Poincar\'e inequality (see Proposition~\ref{poincare}). The estimates on $\varrho$ come from the definition of $E$. We use the relation $\varrho|\nabla\ln\varrho|^2 = 4|\nabla\sqrt{\varrho}|^2$, and the assumption \eqref{P2} on $p$ yields
\[ H(\varrho) \geq c_1\varrho^\gamma - c_2 \]
for some $c_1,c_2>0$.
\end{rem}

\begin{proof}[Proof of Theorem~\ref{th_approx}]
We construct a solution to System \eqref{full_approx} by using the Galerkin approximation. Let $(e_i)_{i\in\mathbb{N}}\subset C^\infty(\T^d)$ be a suitable smooth basis of $H^1(\T^d)$, for instance the Fourier basis, and set $X_N:= \mathrm{span}\{e_1,\dots,e_N\}$. We put 
$u_N(t,x) = \sum_{i=1}^N \lambda_i(t)e_i(x)$ and $u_{0,N}=\sum_{i=1}^N\langle u_0,e_i\rangle$. 
Moreover, let $\sqrt{\varrho_{0,N}}\in C^\infty(\T^d)$ be such that $\sqrt{\varrho_{0,N}}\geq \frac{1}{\sqrt{N}}$, $\sqrt{\varrho_{0,N}}\to\sqrt{\varrho_0}$ in $H^1(\T^d)$, $\varrho_{0,N}\to\varrho_0$ in $L^\gamma(T^d)$.

\textit{Step1: Global existence of the Galerkin approximation.}

Let $S_N\colon C([0,T];X_N)\to C([0,T];C^k(\T^d))$ for some $k\in\mathbb{N}$, be such that $\varrho=S_N(u)$ is a classical solution to
\[ \partial_t\varrho+\ddiv(\varrho u) -\varepsilon\Delta\varrho = 0, \quad \varrho_{|_{t=0}}=\varrho_{0,N}. \]
Note that from the classical theory of parabolic equations it follows that for $\nu>0$
\[ \|\partial_t\varrho\|_{C(0,T;C^{\nu})} + \|\varrho\|_{C(0,T;C^{2+\nu})} \leq C(\varepsilon,\nu)\left(\|\varrho_{0,N}\|_{C^{2+\nu}(\T^d)}+\|\ddiv(\varrho u)\|_{C(0,T;C^{\nu})}\right), \]
moreover
\begin{equation}\label{rho_N} \inf_{y\in\T^d}\varrho_{0,N}(y) e^{-\|\ddiv u\|_{L^1(0,T;L^\infty)}}\leq \varrho(t,x) \leq \sup_{y\in\T^d}\varrho_{0,N}(y) e^{\|\ddiv u\|_{L^1(0,T;L^\infty)}}. \end{equation}
In particular, since $\varrho_{0,N}\geq\frac{1}{N}>0$, it follows that $\varrho_N:=S_N(u_N)$ is bounded away from zero. Following the arguments from e.g. \cite[Chapter 7]{feireisl_book}, one can also show that the operator $S_N$ is continuous.

Now, let 
\[\mathcal{B}=\{u\in C([0,T^*];X_N): \|u-u_{0,N}\|_{C(0,T^*;X_N)}\leq 1\}\]
for some $T^*\leq T$ that will be chosen later. We construct the solution $(\rho_N,u_N)$ for the initial condition $(\rho_{0,N}, u_{0,N})$ by applying the Schauder fixed point theorem for the operator $\mathcal{T}\colon \mathcal{B}\to C(0,T^*;X_N)$ given by
\[ \mathcal{T}(u_N) := \mathfrak{M}[S(u_N)(t)]^{-1}\left(\mathfrak{M}[\varrho_{0,N}](u_{0,N})+\int_0^t\mathfrak{N}(S(u_N),u_N)(s)\;\dd s\right), \]
where $\mathfrak{M}[\varrho]\colon X_N\to X^*_N$ is given by
$\displaystyle \langle \mathfrak{M}[\varrho]u,w\rangle = \int_{\T^d}\varrho u\cdot w \;\dd x $
and 
\[ \mathfrak{N}(\varrho,u) = -\ddiv(\varrho u\otimes u) + \ddiv(\varrho\mathbb{S}(\D u)) - \nabla p(\varrho) +2\kappa\varrho\nabla\left(\frac{\Delta\sqrt{\varrho}}{\sqrt{\varrho}}\right) -\varepsilon(\nabla\varrho\cdot\nabla) u + \nu\ddiv\left(|\D u|^{q-2}\D u\right), \]
where we used Lemma~\ref{lem:Korteweg} to rewrite the Korteweg term. For $\varrho$ bounded away from zero, $\mathfrak{M}[\varrho]$ is invertible, $\|\mathfrak{M}[\varrho]^{-1}\|_{\mathcal{L}(X_N^*,X_N)}\leq \left\|{\varrho}^{-1}\right\|_{L^\infty(\T^d)}$, and we have
\begin{equation}\label{m_inv}
\|\mathfrak{M}[\varrho_1]^{-1}-\mathfrak{M}[\varrho_2]^{-1}\|_{\mathcal{L}(X_N^*,X_N)}\leq C(N)\|\varrho_1-\varrho_2\|_{L^1(\T^d)}, \end{equation}
provided that $\varrho_1,\varrho_2\geq c$ for some $c>0$. 
Taking $T^*$ sufficiently small, it is easy to show that $\mathcal{T}(\mathcal{B})\subset \mathcal{B}$. In fact, thanks to the estimates on $\partial_t\varrho_N$ and \eqref{m_inv}, using the Arzel\`a-Ascoli theorem one can show that $\mathcal{T}(\mathcal{B})$ is compact, which together with the continuity of $\mathcal{T}$ allows to apply the Schauder fixed point theorem (see also \cite{feireisl_book,jungel,vasseur-yu}).
In conclusion, we obtain a solution $(\varrho_N,u_N)$ on a (possibly) small time interval $[0,T^*]$, satisfying 
\[ u_N(t) = \mathfrak{M}[\varrho_N(t)]^{-1}\left(\mathfrak{M}[\varrho_{0,N}]u_{0,N} + \int_0^t \mathfrak{N}(\varrho_N,u_N)(s)\;\dd s\right). \]
In particular, noting that $\mathfrak{N}$ is continuous in time and thus $t\mapsto \int_0^t \mathfrak{N}(\varrho_N,u_N)\;\dd s$ is differentiable, one can observe that $u_N\in C^1([0,T^*];X_N)$. 
Applying $\mathfrak{M}[\varrho_N(t)]$ to both sides, we obtain for all $\varphi\in X_N$ (independent of $t$)
\begin{align*}
 \int_{\T^d} (\varrho_Nu_N)(t)\cdot\varphi\;\dd x =&
 \int_{\T^d} (\varrho_{0,N}u_{0,N})\cdot\varphi \;\dd x
+ \int_0^t \int_{\T^d}\Big( \varrho_N u_N\otimes u_N \Big)(s) :\nabla\varphi\;\dd x\dd s\\
& -\int_0^t \int_{\T^d}\Big(\varrho_N\mathbb{S}(\D u_N)\Big)(s) :\D\varphi\;\dd x \dd s - \int_0^t \int_{\T^d}\Big( \nabla p(\varrho_N)\Big)(s)\cdot\varphi\;\dd x\dd s \\
&+ 2\kappa \int_0^t \int_{\T^d}\Big( \rho_N \nabla \frac{\Delta\sqrt{\varrho_N}}{ \sqrt{\varrho_N}} \Big)(s) \varphi\;\dd x \dd s
-\varepsilon\int_0^t \int_{\T^d}\Big( (\nabla\varrho_N\cdot\nabla ) u_N\Big)(s) \cdot\varphi\;\dd x\dd s \\
&- \nu\int_0^t\!\!\!\int_{\T^d}|\D u_N|^{q-2}\D u_N:\D\varphi\;\dd x\dd s
\end{align*}
and differentiating in time , we obtain
\begin{equation}\label{weak_form_galerkin} 
\begin{aligned}
\int_{\T^d} \frac{\partial}{\partial t}(\varrho_Nu_N)(t)\cdot\varphi\;\dd x =& \int_{\T^d}\Big(\varrho_Nu_N\otimes u_N \Big)(t):\nabla\varphi\;\dd x - \int_{\T^d}\Big(\varrho_N\mathbb{S}(\D u_N)\Big)(t):\D\varphi\;\dd x \\
&- \int_{\T^d}\Big(\nabla p(\varrho_N)\Big)(t)\cdot\varphi\;\dd x 
+ 2\kappa\int_{\T^d}\Big(\rho_N \nabla \frac{\Delta\sqrt{\varrho_N}}{ \sqrt{\varrho_N}} \Big)(t)\cdot\varphi \;\dd x \\
-&\varepsilon\int_{\T^d}\Big((\nabla\varrho_N\cdot\nabla ) u_N\Big)(t)\cdot\varphi\;\dd x - \nu\int_{\T^d}|\D u_N(t)|^{q-2}\D u_N(t):\D\varphi\;\dd x.
\end{aligned}\end{equation}
As this equality is true for every fixed $t\in [0,T^*]$ and $\varphi\in X_N$, we can apply it for $t\in [0,T^*]$ given and $\varphi=u_N(t)$. We may now follow the computation of Proposition~\ref{prop:energy} to compute the energy estimate of the approximate system:
\begin{align*}
 \frac{\dd}{\dd t}E_N:=& \frac{\dd}{\dd t}\int_{\T^d}\left(\frac{1}{2}\varrho_N|u_N|^2 + \frac\kappa2 \rho_N |\nabla \ln \varrho_N|^2+ H(\varrho_N) \right) \dd x \\
 =& \int_{\T^d} u_N\cdot \partial_t (\rho_N u_N)\, \dd x -\frac12 \int_{\T^d} |u_N|^2 \partial_t \rho_N\, \dd x
 + \frac{\kappa}{2} \frac{\dd}{\dd t} \int_{\T^d} \frac{|\nabla \rho_N|^2}{\rho_N} \,\dd x+ \int_{\T^d} H'(\rho_N) \partial_t \rho_N \, \dd x
\end{align*}
where, by using the equation on $\rho_N$
\begin{align*}
 -\frac12 \int_{\T^d} |u_N|^2 \partial_t \rho_N \, \dd x
 =&\frac12 \int_{\T^d} |u_N|^2 \ddiv (\rho_N u_N)\, \dd x 
 -\frac\epsilon2 \int_{\T^d} |u_N|^2 \Delta \rho_N \, \dd x \\
 =&-\int_{\T^d}(\varrho_Nu_N\otimes u_N \Big):\nabla u_N\;\dd x 
 +\varepsilon \int_{\T^d} \Big( (\nabla\varrho_N\cdot\nabla ) u_N \Big) \cdot u_N \, \dd x
\end{align*}
and by Lemma~\ref{lem:Korteweg} (see the proof of Proposition~\ref{prop:energy} for details)
\begin{align*}
 \frac{\kappa}{2} \frac{\dd}{\dd t} \int_{\T^d} \frac{|\nabla \rho_N|^2}{\rho_N} \, \dd x =& -\kappa \int \partial_t\varrho_N\left(\ddiv\left(\frac{1}{\varrho_N}\nabla\varrho_N\right) + \frac{1}{2\varrho_N^2}|\nabla\varrho_N|^2\right)\;\dd x \\
 =& \kappa\int \ddiv(\rho_N u_N) \left(\Delta\ln\varrho_N + \frac{1}{2}|\nabla \ln\rho_N|^2\right) \dd x \\
 &-\kappa\epsilon \int_{\T^d}\Delta\varrho_N\left(\Delta\ln\varrho_N + \frac{1}{2}|\nabla \ln\rho_N|^2\right)\dd x \\
 =& -\kappa \int_{\T^d} u_N\cdot\ddiv\left(\varrho_N\nabla^2\varrho_N\right) \dd x \\
 &+\kappa\varepsilon\int_{\T^d}\nabla^2\ln\varrho_N:\left(-\nabla^2\varrho_N + \nabla\varrho_N\otimes\nabla\ln\varrho_N\right)\;\dd x \\
 =& -2\kappa \int_{\T^d} u_N\cdot \Big(\rho_N \nabla \frac{\Delta\sqrt{\varrho_N}}{ \sqrt{\varrho_N}} \Big) \dd x - \kappa\varepsilon\int_{\T^d}\varrho_N|\nabla^2\ln\varrho_N|^2\;\dd x
\end{align*}
and finally
\begin{align*}
 \int_{\T^d} H'(\rho_N) \partial_t \rho_N \, \dd x=&\int u_N\cdot \nabla(p(\rho_N))\, \dd x - \epsilon \int_{\T^d} \nabla (H'(\rho_N))\cdot \nabla \rho_N \, \dd x\\
 =&\int u_N\cdot \nabla(p(\rho_N))\, \dd x - \epsilon \int_{\T^d} H''(\rho_N) |\nabla \rho_N|^2 \, \dd x
\end{align*}

Putting these equalities together with \eqref{weak_form_galerkin}, we get
\begin{multline}\label{dt_energy}
 \frac{\dd}{\dd t}E_N + \int_{\T^d}\varrho_N\mathbb{S}(\D u_N):\D u_N\;\dd x + \nu\int_{\T^d}|\D u_N|^q\;\dd x \\
 + \kappa\varepsilon\int_{\T^d}\varrho_N|\nabla^2 \ln \varrho_N|^2\;\dd x 
+ \epsilon \int_{\T^d} H''(\rho_N) |\nabla \rho_N|^2 \, \dd x = 0.
\end{multline}
Integrating \eqref{dt_energy} in time, we obtain the energy estimate, satisfied uniformly with respect to $N$:
\begin{multline}\label{energy0}
 E_N(t)+\int_0^t\!\!\!\int_{\T^d}\varrho_N\mathbb{S}(\D u_N):\D u_N\;\dd x\dd s + \nu\int_0^t\!\!\!\int_{\T^d}|\D u_N|^q\;\dd x\dd s \\
 + \kappa\varepsilon\int_0^t\!\!\!\int_{\T^d}\varrho_N|\nabla^2\ln\varrho_N|^2\dd x\dd s +\varepsilon\int_0^t\!\!\!\int_{\T^d}H''(\varrho_N)|\nabla\varrho_N|^2\;\dd x\dd s = E_N(0).
\end{multline}
Note that from the assumption \eqref{P2} on $p$ it follows that $H(\varrho)\leq C_1\varrho^\gamma+C_2$ for some constant $C_1,C_2$ and therefore $E_N(0)$ is uniformly bounded in $N$ using the strong convergence of $\varrho_{0,N},u_{0,N}$. 

The above equality provides in particular the bound on $\D u_N$ in $L^q(0,T^*;L^q)$, which does not depend on $T^*$. By the Korn's inequality 
(see e. g. \cite{feireisnovotnyl_book} Theorem 10.17) and a special version of the Poincar\'e inequality from Proposition~\ref{poincare} (see also Remark 5.1 in \cite{lions} in the $L^2$ framework), we deduce the $L^q(0,T;W^{1,q})$ bound on $u_N$. Since all norms are equivalent on $X_N$, this also implies the bound in $L^1(0,T;W^{1,\infty})$, and in consequence $\varrho_N$ satisfies the estimate (\ref{rho_N}) again independently on $T^*$. Finally, this and the estimate on $\displaystyle\int_{\T^d}\varrho_N|u_N|^2\;\dd x$ from (\ref{energy0}) provides the estimate on $\|u_N\|_{C(0,T^*,X_N)}$, which is independent of $T^*$. This allows us to iterate the process and eventually obtain the solution $(\varrho_N,u_N)$ on the whole interval $[0,T]$.

Multiplying $\eqref{dt_energy}$ by a function $\psi\in C_0^\infty(0,T)$, we also obtain a localized-in-time version of (\ref{energy0}), which will be useful later on:
\begin{multline}\label{energy_loc0}
 -\int_0^T\psi'(t) E_N(t)\;\dd x\dd t 
 + \int_0^T\!\!\!\int_{\T^d}\psi(t)\varrho_N\mathbb{S}(\D u_N):\D u_N\;\dd x\dd t + \nu\int_0^T\!\!\!\int_{\T^d}\psi(t)|\D u_N|^q\;\dd x\dd s \\
 + \kappa\varepsilon\int_0^T\!\!\!\int_{\T^d}\psi(t)\varrho_N|\nabla^2\ln\varrho_N|^2\dd x\dd t +\varepsilon\int_0^T\!\!\!\int_{\T^d}\psi(t)H''(\varrho_N)|\nabla\varrho_N|^2\;\dd x\dd t = 0 .
\end{multline}

\medskip

\textit{Step 2: Passing to the limit in all the terms except the nonlinear stress tensor.}

Inequality \eqref{energy0} provides also the uniform estimates needed to pass to the limit with $N\to\infty$. In particular, again by the Korn's and Poincar\'e's inequalities, we have the estimate
\[ \|u_N\|_{L^q(0,T;W^{1,q})} \leq C \]
independently of $N$. By the Banach--Alaoglu theorem, from \eqref{energy0} we have (up to a subsequence)
\[ u_N\rightharpoonup u \quad \text{in} \quad L^q(0,T;W^{1,q}). \]
The bound for $\varepsilon \kappa \displaystyle \int_0^T\int_{\T^d} \varrho_N|\nabla^2\ln\varrho_N|^2$ provides the estimate
\begin{equation}\label{kappa} (\varepsilon\kappa)^{1/4}\|\nabla\varrho_N^{1/4}\|_{L^4(0,T;L^4)} + \sqrt{\varepsilon\kappa}\|\sqrt{\varrho_N}\|_{L^2(0,T;H^2)} \leq C, \end{equation}
using the estimates
\begin{equation}\label{rel1}
\int_{\T^d} \varrho_N |\nabla^2\ln\varrho_N|^2 \geq \frac{1}{8} \int_{\T^d} |\nabla\varrho_N^{1/4}|^4
\end{equation}
and
\begin{equation}\label{rel2}
\int_{\T^d} \varrho_N|\nabla^2\ln\varrho_N|^2 \geq \frac{1}{7} \int_{\T^d} |\nabla^2\sqrt {\varrho_N}|^2
\end{equation}
which were proven in \cite{jungel}
(for the proof see also \cite[Lemma 2.1]{vasseur-yu}; for the generalization of (\ref{kappa}) we refer to \cite[Lemma 2.1]{BrVaYu} and the Appendix in the ArXiv version of \cite{alazard-bresch}), and thus
\[ \sqrt{\varrho_N}\rightharpoonup\sqrt{\varrho} \quad \text{in} \quad L^2(0,T;H^2). \]
Moreover, using the fact that $q>d \geq 2$ and writing
\begin{equation}\label{eq:DeltaRho}
 \Delta\varrho_N = 8\sqrt{\varrho_N}|\nabla\varrho_N^{1/4}|^2 +2\sqrt{\varrho_N}\Delta\sqrt{\varrho_N},
\end{equation}
we get that
\begin{equation*}
 \partial_t\sqrt{\varrho_N} = -\nabla\sqrt{\varrho_N}\cdot u_N - \frac{1}{2}\sqrt{\varrho_N}\ddiv u_N + \varepsilon\left(4\left|\nabla\varrho_N^{1/4}\right|^2 + \Delta\sqrt{\varrho_N}\right) 
\end{equation*}
is bounded in $L^2(0,T;L^2)$. In conclusion, from \eqref{kappa} and Aubin--Lions--Simon lemma we get that (up to a subsequence) 
\[ \sqrt{\varrho_N}\to\sqrt{\varrho} \quad \text{in} \quad L^2(0,T;H^1). \]
In order to pass to the limit in the pressure, we note that $H''(\rho_N) = p'(\rho_N)/\rho_N \geq a \rho^{\gamma-2} - b/\rho$, and thus (\ref{energy0}) gives 
$$C(\varepsilon) \geq \int_{\T^d} H''(\rho_N) |\nabla \rho_N|^2 \geq \frac{4a}{\gamma^2} \int_{\T^d} |\nabla \rho_N^{\gamma/2}|^2 - 4 b \int_{\T^d} |\nabla \sqrt{\rho_N}|^2.
$$
Since the energy estimate (\ref{energy0}) also provides that
$$\sup_{t\in [0,T]} \kappa \int_{\T^d} |\nabla \sqrt{\rho_N}|^2 \leq C, $$
we get that $\nabla \rho_N^{\gamma/2}$ is uniformly bounded in $L^2((0,T)\times \T^d)$ with respect to $N$ and therefore we have the uniform bound on $\varrho_N^\gamma$ in
$L^1(0,T;L^{3}(\T^d))$. Since we also have $\rho_N^\gamma\in L^\infty(0,T;L^1(\T^d))$, and therefore by interpolation
$\rho_N^\gamma \in L^{5/3}((0,T)\times \T^d)$. This bound and the strong convergence related to $\rho_N$ we already have proven allows to conclude that $p(\rho_N)\to p(\rho)$ in $L^r((0,T)\times \T^d)$ for all $r< 5/3.$ Furthermore, from the momentum equation we have the following
bound on the time derivative of $\varrho_N u_N$
\[ \|\partial_t(\varrho_Nu_N)\|_{L^{q'}(0,T;W^{-1,q'}(\T^d))}\leq C, \]
and we are able to estimate $\rho_N u_N$ and $\nabla (\rho_N u_N)$ as follows:
\[ \|\varrho_Nu_N\|_{L^2(0,T;L^2(\T^d))}\leq C\|\sqrt{\varrho_N} u_N\|_{L^\infty(0,T;L^2(\T^d))}\|\sqrt \varrho_N\|_{L^2(0,T;H^2(\T^d))} \]
and 
\[ \|\nabla(\varrho_Nu_N)\|_{L^q(0,T;L^{3/2})} \leq 
\|\nabla \sqrt\rho_N\|_{L^\infty(0,T;L^2(\T^d))}\| \sqrt \rho_N\|_{L^\infty(0,T;L^6(\T^d))}\| u_N\|_{L^q(0,T;L^\infty(\T^d))}\]
\[ +\| \rho_N\|_{L^\infty(0,T;L^3(\T^d))}\| \nabla u_N\|_{L^q(0,T;L^q(\T^d))}. \]

 In conclusion, again from Aubin--Lions--Simon lemma
\[ \varrho_Nu_N\to \varrho u \quad \text{in} \quad L^2(0,T;L^2(\T^d)). \]
For the Korteweg term, using Lemma~\ref{lem:Korteweg} we see that for a smooth $\varphi$
\begin{equation}
\begin{aligned}
-\int_0^T\!\!\int_{\T^d} \ddiv\left(\varrho_N\nabla^2\ln\varrho_N\right)\cdot\varphi\;\dd x\dd t =& -\int_0^T\!\!\int_{\T^d}\varrho_N\nabla\left(\frac{\Delta\sqrt{\varrho_N}}{\sqrt{\varrho_N}}\right)\cdot\varphi\;\dd x\dd t \\
=& \int_0^T\!\!\int_{\T^d}\Delta\sqrt{\varrho_N}\left(2\nabla\sqrt{\varrho_N}\cdot \varphi + \sqrt{\varrho_N}\ddiv \varphi\right)\;\dd x\dd t \\
&\to \int_0^T\!\!\int_{\T^d}\Delta\sqrt{\varrho}\left(2\nabla\sqrt{\varrho}\cdot \varphi + \sqrt{\varrho}\ddiv \varphi\right)\;\dd x\dd t
\end{aligned}
\end{equation}
as $N\to\infty$.
The above convergences allows us to pass to the limit in all the terms except the nonlinear stress tensor. In the limit we get the continuity equation
\[ \partial_t\varrho + \ddiv(\varrho u) = \varepsilon\Delta\varrho, \]
and
\begin{equation}\label{weak_momentum} 
\begin{aligned}
-\int_0^T\!\!\! & \int_{\T^d} \varrho u\cdot\partial_t\varphi\;\dd x\dd t - \int_0^T\!\!\!\int_{\T^d}\varrho u\otimes u:\nabla\varphi\;\dd x\dd t +\int_0^T\!\!\!\int_{\T^d}\varrho\overline{\mathbb{S}(\D u)}:\nabla\varphi\;\dd x\dd t \\
&- \int_0^T\!\!\!\int_{\T^d} p(\varrho)\ddiv\varphi\;\dd x\dd t + \kappa\int_0^T\!\!\int_{\T^d}\Delta\sqrt{\varrho}\left(2\nabla\sqrt{\varrho}\cdot \varphi + \sqrt{\varrho}\ddiv \varphi\right)\;\dd x\dd t \\
& \qquad \qquad \qquad = -\varepsilon\int_0^T\!\!\!\int_{\T^d}(\nabla\varrho\cdot\nabla u)\cdot\varphi\;\dd x\dd t - \nu\int_0^T\!\!\!\int_{\T^d}\overline{|\D u|^{q-2}\D u}:\nabla\varphi\;\dd x\dd t
\end{aligned}\end{equation}
for each $\varphi\in L^q(0,T;W^{1,q}(\T^d))$ with $\partial_t\varphi\in L^2(0,T;L^2(\T^d))$, where by $\overline{\mathbb{S}(\D u)}$ and $\overline{|\D u|^{q-2}\D u}$ we denote the weak limits of $\mathbb{S}(\D u_N)$ and $|\D u_N|^{q-2}\D u_N$ respectively. Note that from the assumption (\ref{S1}) on $\mathbb{S}$ it follows that from the energy estimates we have $\varrho^{1/p}\D u\in L^p(0,T;L^p(\T^d))$ and then $\varrho^{\frac{p-1}{p}}\mathbb{S}(\D u)\in L^{\frac{p}{p-1}}(0,T;L^{\frac{p}{p-1}}(\T^d))$. By interpolation, using the regularity $\varrho\in L^\infty(0,T;L^{3}(\T^d))$, we get
\[ \varrho\mathbb{S}(\D u) = \varrho^{\frac{1}{p}}\varrho^{\frac{p-1}{p}}\mathbb{S}(\D u)\in L^{\frac{p}{p-1}}(0,T;L^{\frac{3p}{3p-2}}(\T^d)). \]
Moreover, 
\[ \varrho u\otimes u = \sqrt{\varrho}\sqrt{\varrho} u\otimes u \in L^q(0,T;L^{3/2}). \]
Therefore the assumption (\ref{q}) on $q$ ensures us that $\varphi$ is indeed an admissible test function.

Now we pass to the limit in \eqref{energy0} and \eqref{energy_loc0}. For the Korteweg term, we use the relation
\[ \sqrt{\varrho_N}\nabla^2\ln\varrho_N = 2\left(\nabla^2\sqrt{\varrho_N} -4\nabla\varrho_N^{1/4}\otimes\nabla\varrho_N^{1/4}\right). \]
From the strong convergence of $\sqrt{\varrho_N}$ and $\nabla\sqrt{\varrho_N}$ we know that (up to a subsequence) $\sqrt{\varrho_N}\to\sqrt{\varrho}$ and $\nabla\sqrt{\varrho_N}\to\nabla\sqrt{\varrho}$ a. e.. Then
\[ \nabla\varrho_N^{1/4} = \frac{1}{2\varrho^{1/4}_N}\nabla\sqrt{\varrho_N} \to \frac{1}{2\varrho^{1/4}}\nabla\sqrt{\varrho} \quad \text{a. e. on} \quad \{\varrho>0\} \]
and thus
\[ \nabla\varrho_N^{1/4}\otimes\nabla\varrho_N^{1/4}\mathbbm{1}_{\varrho>0}\rightharpoonup \nabla\varrho^{1/4}\otimes\nabla\varrho^{1/4}\mathbbm{1}_{\varrho>0} \quad \text{in} \quad L^2([0,T]\times\T^d), \]
as the weak and pointwise limits coincide.
In consequence, since the gradient of a Sobolev function is equal to $0$ a. e. on the level sets (see e. g. Section 6.1 in \cite{EvansGariepy}), we know that $\nabla\varrho^{1/4}$ and $\nabla^2\sqrt{\varrho}$ are equal to $0$ a. e. on $\{\varrho=0\}$ and we get
\[\begin{aligned} \int_0^T\!\!\int_{\T^d}\left|\nabla^2\sqrt{\varrho} -4\nabla\varrho^{1/4}\otimes\nabla\varrho^{1/4}\right|^2\;\dd x\dd t &= \int_0^T\!\!\int_{\T^d}\left|\nabla^2\sqrt{\varrho} -4\nabla\varrho^{1/4}\otimes\nabla\varrho^{1/4}\right|^2\mathbbm{1}_{\varrho>0}\;\dd x\dd t \\
&\leq \liminf_{N\to\infty}\int_0^T\!\!\int_{\T^d}\left|\nabla^2\sqrt{\varrho_N} -4\nabla\varrho_N^{1/4}\otimes\nabla\varrho_N^{1/4}\right|^2\;\dd x\dd t. \end{aligned}\]
For the term related to the pressure, we see that 
\[ H''(\varrho)|\nabla\varrho|^2 = \frac{p'(\varrho)}{\varrho}|\nabla\varrho|^2 = |\nabla\Phi(\varrho)|^2, \]
where $\Phi'(\varrho)=\sqrt{\frac{p'(\varrho)}{\varrho}}$. Using the bound \eqref{P2} on $p'$ we can conclude that $\Phi(\varrho_N)$ converges strongly to $\Phi(\varrho)$ and thus
\[ \int_0^T\!\!\int_{\T^d}H''(\varrho)|\nabla\varrho|^2\;\dd x\dd t = \int_0^T\!\!\int_{\T^d}|\nabla\Phi(\varrho)|^2\;\dd x\dd t \leq \liminf_{N\to\infty}\int_0^T\!\!\int_{\T^d}H''(\varrho_N)|\nabla\varrho_N|^2\;\dd x\dd t. \]
In conclusion, by the weakly lower semicontinuity of convex functions, passing to the limit in \eqref{energy0} we get
\begin{multline}\label{energy1}
 E(\varrho,u)(t) +\nu\int_0^t\!\!\!\int_{\T^d}\overline{|\D u|^q}\;\dd x\dd s + \int_0^t\!\!\!\int_{\T^d}\varrho\overline{\mathbb{S}(\D u):\D u}\;\dd x\dd s \\
 + 4\kappa\varepsilon\int_0^t\!\!\!\int_{\T^d}\left|\nabla^2\sqrt{\varrho} -4\nabla\varrho^{1/4}\otimes\nabla\varrho^{1/4}\right|^2\dd x\dd s
 +\varepsilon\int_0^t\!\!\!\int_{\T^d}H''(\varrho)|\nabla\varrho|^2\;\dd x\dd s \leq E(\varrho_0,u_0),
\end{multline}
where again by $\overline{(\cdot)}$ we denote the weak limit, and analogously for the localized formulation
\begin{multline}\label{energy_local}
 -\int_0^T\psi'(t) E(\varrho,u)\;\dd x\dd t + \nu\int_0^T\!\!\!\int_{\T^d}\psi(t)\overline{|\D u|^q}\dd x\dd t + \int_0^T\!\!\!\int_{\T^d}\psi(t)\varrho\overline{\mathbb{S}(\D u):\D u}\;\dd x\dd t \\
 +4\kappa\varepsilon\int_0^T\!\!\!\int_{\T^d}\psi(t)\left|\nabla^2\sqrt{\varrho} -4\nabla\varrho^{1/4}\otimes\nabla\varrho^{1/4}\right|^2\dd x\dd t +\varepsilon\int_0^T\!\!\!\int_{\T^d}\psi(t)H''(\varrho)|\nabla\varrho|^2\;\dd x\dd t \leq 0.
\end{multline}

\textit{Step3: Passing to the limit in the nonlinear stress tensor.}

In order to finish the proof and pass to the limit in the nonlinear stress tensor, we apply the monotonicity method. Let us put $\varphi=(\psi(t)u_\eta)_\eta$, where $\psi\in C_0^\infty((0,T))$ and $f_\eta=f\ast\omega_\eta$ for $\omega_\eta$ being a standard mollifier in time and space. We have
\begin{align*} -\int_0^T\!\!\!\int_{\T^d} \varrho u\cdot\partial_t(\psi u_\eta)_\eta\;\dd x\dd t =& \int_0^T\!\!\!\int_{\T^d}\psi(t)\partial_t(\varrho u)_\eta\cdot u_\eta\;\dd x\dd t \\
=& \int_0^T\!\!\!\int_{\T^d}\psi(t)\partial_t(\varrho u_\eta)\cdot u_\eta\;\dd x\dd t + \int_0^T\!\!\!\int_{\T^d}\psi(t)(\partial_t(\varrho u)_\eta-\partial_t(\varrho u_\eta))\cdot u_\eta\;\dd x\dd t \\
=& \int_0^T\!\!\!\int_{\T^d}\psi(t)\partial_t\varrho |u_\eta|^2\;\dd x\dd t + \frac{1}{2}\int_0^T\!\!\!\int_{\T^d}\psi(t)\varrho\partial_t|u_\eta|^2\;\dd x\dd t \\
&+ \int_0^T\!\!\!\int_{\T^d}\psi(t)(\partial_t(\varrho u)_\eta-\partial_t(\varrho u_\eta))\cdot u_\eta\;\dd x\dd t
\end{align*}
and
\begin{align*}
-\int_0^T\!\!\!\int_{\T^d}\varrho u\otimes u:\nabla(\psi u_\eta)_\eta\;\dd x\dd t =& - \int_0^T\!\!\!\int_{\T^d}\psi(t)(\varrho u\otimes u)_\eta:\nabla u_\eta \;\dd x\dd t \\
=& -\int_0^T\!\!\!\int_{\T^d}\psi(t)\varrho u\otimes u_\eta:\nabla u_\eta \;\dd x\dd t \\
&- \int_0^T\!\!\!\int_{\T^d}\psi(t)((\varrho u\otimes u)_\eta-\varrho u\otimes u_\eta):\nabla u_\eta\;\dd x\dd t \\
=& \frac{1}{2}\int_0^T\!\!\!\int_{\T^d}\psi(t)\ddiv(\varrho u)|u_\eta|^2 \\
&- \int_0^T\!\!\!\int_{\T^d}\psi(t)((\varrho u\otimes u)_\eta-\varrho u\otimes u_\eta):\nabla u_\eta\;\dd x\dd t \\
=& -\frac{1}{2}\int_0^T\!\!\!\int_{\T^d}\psi(t)\partial_t\varrho|u_\eta|^2\;\dd x\dd t + \frac{1}{2}\varepsilon\int_0^T\!\!\!\int_{\T^d}\psi(t)\Delta\varrho|u_\eta|^2\;\dd x\dd t \\
&-\int_0^T\!\!\!\int_{\T^d}\psi(t)((\varrho u\otimes u)_\eta-\varrho u\otimes u_\eta):\nabla u_\eta\;\dd x\dd t
\end{align*}
In conclusion, putting $(\psi u_\eta)_\eta$ into \eqref{weak_momentum}, we obtain
\begin{equation}\label{weak_energy_eta}
\hspace{-3mm}\begin{aligned}
&-\frac{1}{2}\int_0^T\!\!\!\int_{\T^d}\psi'(t)\varrho|u_\eta|^2\;\dd x\dd t + \int_0^T\!\!\!\int_{\T^d}\psi(t)(\partial_t(\varrho u)_\eta-\partial_t(\varrho u_\eta))\cdot u_\eta\;\dd x\dd t \\
&-\int_0^T\!\!\!\int_{\T^d}\psi(t)((\varrho u\otimes u)_\eta-\varrho u\otimes u_\eta):\nabla u_\eta\;\dd x\dd t + \int_0^T\!\!\!\int_{\T^d}\psi(t)(\varrho\overline{\mathbb{S}(\D u)})_\eta:\nabla u_\eta\;\dd x\dd t \\
&- \int_0^T\!\!\!\int_{\T^d}\psi(t)(p(\varrho))_\eta\ddiv u_\eta\;\dd x\dd t + 2\kappa\int_0^T\!\!\!\int_{\T^d}\psi(t)\left((\Delta\sqrt{\varrho}\sqrt{\varrho})_\eta\ddiv u_\eta + 2(\Delta\sqrt{\varrho}\nabla\sqrt{\varrho})_\eta\cdot u_\eta\right)\;\dd x\dd t \\
&- \varepsilon\int_0^T\!\!\!\int_{\T^d}\psi(t)\left(\nabla\varrho\cdot \nabla u_\eta-(\nabla\varrho\cdot\nabla u)_\eta\right)\cdot u_\eta\;\dd x\dd t + \nu\int_0^T\!\!\!\int_{\T^d}\psi(t)(\overline{|\D u|^{q-2}\D u})_\eta:\nabla u_\eta\;\dd x\dd t =0.
\end{aligned}\end{equation}
We can now pass to the limit with $\eta\to 0$. By Friedrichs lemma (see e. g. \cite[Lemma 2.3]{lions}) and the estimate on $\partial_t\varrho=2\sqrt{\varrho}\partial_t\sqrt{\varrho}\in L^2(0,T;L^{3/2})$, we have 
\[ \lim_{\eta\to 0}\int_0^T\!\!\!\int_{\T^d}\psi(t)(\partial_t(\varrho u)_\eta-\partial_t(\varrho u_\eta))\cdot u_\eta\;\dd x\dd t = 0. \]
Moreover, using the estimates on $u$ and $\varrho$, we know that $\nabla u_\eta\to\nabla u$ in $L^q([0,T]\times\T^d)$ 
and also that $(\varrho u\otimes u)_\eta, \varrho u\otimes u_\eta\to \varrho u\otimes u$ in $L^2(0,T;L^{3/2})$. Note that
$$\rho u\otimes u = \sqrt \rho \sqrt\rho u \otimes u \in L^2(0,T;L^{3/2}(\T^d))$$
using the bounds $\sqrt \rho \in L^\infty(0,T;H^1(\T^d))$, $\sqrt\rho u \in L^\infty(0,T;L^2(\T^d))$ and $u\in L^q(0,T;L^\infty(\T^d))$.
Therefore
\[ \lim_{\eta\to 0}\int_0^T\!\!\!\int_{\T^d}\psi(t)((\varrho u\otimes u)_\eta-\varrho u\otimes u_\eta):\nabla u_\eta\;\dd x\dd t = 0. \]
Using similar arguments and the estimates on $\nabla\varrho$, it holds
\[ \lim_{\eta\to 0}\int_0^T\!\!\!\int_{\T^d}\psi(t)(\nabla\varrho\nabla u_\eta-(\nabla\varrho\nabla u)_\eta)\cdot u_\eta\;\dd x\dd t = 0.\]
For the remaining terms, by the assumptions on $\mathbb{S}$ and the energy estimates, $(\varrho\overline{\mathbb{S}(\D u)})_\eta\to \varrho\overline{\mathbb{S}(\D u)}$ in $L^{\frac{p}{p-1}}(0,T;L^{\frac{3p}{3p-2}}(\T^d))$, $(\overline{|\D u|^{q-2}\D u})_\eta\to \overline{|\D u|^{q-2}\D u}$ in $L^{\frac{q}{q-1}}([0,T]\times\T^d)$, $(\Delta\sqrt{\varrho}\sqrt{\varrho})_\eta\to\Delta\sqrt{\varrho}\sqrt{\varrho}$ in $L^2(0,T;L^{3/2}(\T^d))$ and $(\Delta\sqrt{\varrho}\nabla\sqrt{\varrho})_\eta\to \Delta\sqrt{\varrho}\nabla\sqrt{\varrho}$ in $L^2(0,T;L^1(\T^d))$.

 From the estimate on $\varrho^{\gamma-2}|\nabla\varrho|^2\sim |\nabla\varrho^{\gamma/2}|^2$, we get $(p(\varrho))_\eta\to p(\varrho)$ in $L^2(0,T;L^3(\T^d))$. Therefore passing to the limit in \eqref{weak_energy_eta}, we get
\begin{equation}\label{energy_loc_limit}
\hspace{-4mm}\begin{aligned}
&-\frac{1}{2}\int_0^T\!\!\!\int_{\T^d}\psi'(t)\varrho|u|^2\;\dd x\dd t + \int_0^T\!\!\!\int_{\T^d}\psi(t)\varrho\overline{\mathbb{S}(\D u)}:\D u\;\dd x\dd t - \int_0^T\!\!\!\int_{\T^d}\psi(t)p(\varrho)\ddiv u\;\dd x\dd t \\
&+ 2\kappa\int_0^T\!\!\!\int_{\T^d}\psi(t)\left(\Delta\sqrt{\varrho}\sqrt{\varrho}\ddiv u + 2\Delta\sqrt{\varrho}\nabla\sqrt{\varrho}\cdot u\right)\;\dd x\dd t + \nu\int_0^T\!\!\!\int_{\T^d}\psi(t)\overline{|\D u|^{q-2}\D u}:\D u\;\dd x\dd t =0.
\end{aligned}\end{equation}
From the continuity equation, 
\begin{align*}
 -\int_0^T\!\!\!\int_{\T^d}\psi'(t)H(\varrho)
 &= -\int_0^T\!\!\!\int_{\T^d}\psi(t)H'(\varrho)(u\cdot \nabla \varrho +\varrho \ddiv u)+\epsilon\int_0^T\!\!\!\int_{\T^d}\psi(t)H'(\varrho)\Delta \rho\\
 &= -\int_0^T\!\!\!\int_{\T^d}\psi(t) (u\cdot \nabla H(\varrho) +H'(\varrho)\varrho \ddiv u)-\epsilon\int_0^T\!\!\!\int_{\T^d}\!\!\!\psi(t)H''(\varrho)|\nabla \rho|^2\\
 &= \int_0^T\!\!\!\int_{\T^d}\psi(t) ( H(\varrho) -H'(\varrho)\varrho )\ddiv u -\epsilon\int_0^T\!\!\!\int_{\T^d}\psi(t)H''(\varrho)|\nabla \rho|^2
\end{align*}
hence, we have
\[ -\int_0^T\!\!\!\int_{\T^d}\psi(t)p(\varrho)\ddiv u\;\dd x\dd t = -\int_0^T\!\!\!\int_{\T^d}\psi'(t)H(\varrho)\;\dd x\dd t + \varepsilon\int_0^T\!\!\!\int_{\T^d}\psi(t)H''(\varrho)|\nabla\varrho|^2\;\dd x\dd t. \]
Moreover,
\begin{align*} 2\kappa\int_0^T\!\!\!\int_{\T^d}\psi(t)\Delta\sqrt{\varrho}\left(\sqrt{\varrho}\ddiv u + 2\nabla\sqrt{\varrho}\cdot u\right)\;\dd x\dd t =& -4\kappa\int_0^T\!\!\!\int_{\T^d}\psi(t)\Delta\sqrt{\varrho}\partial_t\sqrt{\varrho}\;\dd x\dd t \\
&+ 4\varepsilon\kappa\int_0^T\!\!\!\int_{\T^d}\psi(t)\Delta\sqrt{\varrho}\left(4\left|\nabla\varrho^{1/4}\right|^2+\Delta\sqrt{\varrho}\right)\;\dd x\dd t \\
=& 4\kappa\int_0^T\!\!\!\int_{\T^d}\psi(t)\nabla\sqrt{\varrho}\cdot\partial_t\nabla\sqrt{\varrho}\;\dd x\dd t \\
&+ 4\varepsilon\kappa\int_0^T\!\!\!\int_{\T^d}\psi(t)\Delta\sqrt{\varrho}\left(4\left|\nabla\varrho^{1/4}\right|^2+\Delta\sqrt{\varrho}\right)\;\dd x\dd t \\
=& -2\kappa\int_0^T\!\!\!\int_{\T^d}\psi'(t)|\nabla\sqrt{\varrho}|^2\;\dd x\dd t \\
&+ 4\varepsilon\kappa\int_0^T\!\!\!\int_{\T^d}\psi(t)\left|\nabla^2\sqrt{\varrho}-4\nabla\varrho^{1/4}\otimes\nabla\varrho^{1/4}\right|^2\;\dd x\dd t,
\end{align*}
where the last equality is justified by Lemma~\ref{syfny_lemat}.

In conclusion, we can rewrite \eqref{energy_loc_limit} as 
\begin{equation}\label{energy_loc_limit2}
\begin{aligned}
 -\int_0^T\psi'(t) E(\varrho,u)\;\dd x\dd t + \nu\int_0^T\!\!\!\int_{\T^d}\psi(t)\overline{|\D u|^{q-2}\D u}:\D u\dd x\dd t + \int_0^T\!\!\!\int_{\T^d}\psi(t)\varrho\overline{\mathbb{S}(\D u)}:\D u\;\dd x\dd t \\
 + 4\varepsilon\kappa\int_0^T\!\!\!\int_{\T^d}\psi(t)\left|\nabla^2\sqrt{\varrho}-4\nabla\varrho^{1/4}\otimes\nabla\varrho^{1/4}\right|^2\;\dd x\dd t +\varepsilon\int_0^T\!\!\!\int_{\T^d}\psi(t)H''(\varrho)|\nabla\varrho|^2\;\dd x\dd t = 0.
\end{aligned}\end{equation}
Therefore, substracting \eqref{energy_loc_limit2} from \eqref{energy_local}, we get
\[ \int_0^T\!\!\!\int_{\T^d}\psi(t) \left[\nu\left(\overline{|\D u|^q}-\overline{|\D u|^{q-2}\D u}:\D u\right) + \varrho\left(\overline{\mathbb{S}(\D u):\D u}-\overline{\mathbb{S}(\D u)}:\D u\right)\right]\;\dd x\dd t \leq 0. \]

For the stress tensor $\mathbb{S}$, from the monotonicity property \eqref{S2} we know that 
\[ \int_0^T\!\!\!\int_{\T^d}\psi(t)\varrho\left(\overline{\mathbb{S}(\D u):\D u}-\overline{\mathbb{S}(\D u)}:\D u\right)\;\dd x\dd t \geq 0 \]
as well. Moreover, using Lemma~\ref{q_lem} we have
\[ \nu\int_0^T\!\!\!\int_{\T^d}\psi(t)\left[\overline{|\D u|^q}-\overline{|\D u|^{q-2}\D u}:\D u\right]\;\dd x\dd t \geq \frac{\nu}{2^{q-2}}\limsup_{N\to\infty}\int_0^T\!\!\!\int_{\T^d}\psi(t)|\D u_N-\D u|^q\;\dd x\dd t, \]
and we conclude that $\D u_N\to\D u$ in $L^q([0,T]\times\T^d)$. In consequence we also have
\[ \varrho_N\mathbb{S}(\D u_N)\to\varrho\mathbb{S}(\D u) \quad \text{in} \quad L^1([0,T]\times\T^d). \]
This allows us to pass to the limit in all the terms in the weak formulation of \eqref{full_approx} and in the energy inequality \eqref{energy1}:
\begin{multline}\label{energy_final_appr}
 E(\varrho,u)(t) + \nu\int_0^t\!\!\!\int_{\T^d}|\D u|^q\dd x\dd s + \int_0^t\!\!\!\int_{\T^d}\varrho\mathbb{S}(\D u):\D u\;\dd x\dd s \\+ \kappa\varepsilon\int_0^t\!\!\!\int_{\T^d}\varrho|\nabla^2\ln\varrho|^2\dd x\dd s 
 +\varepsilon\int_0^t\!\!\!\int_{\T^d}H''(\varrho)|\nabla\varrho|^2\;\dd x\dd s \leq E(\varrho_0,u_0).
\end{multline}
\end{proof}

\section{Relative entropy inequality and limit passage with $\nu,\varepsilon\to 0$}\label{ex_section2}
Before we pass to the limit with the approximating parameters, we switch to dissipative solutions. First, we show that 
\begin{prop}
 For a weak solution $(\varrho,u)$ to \eqref{full_approx} satisfying the energy inequality, it holds
 \begin{equation*}
\begin{aligned}
\mathcal{E}(t)-\mathcal{E}(0) \leq & -\int_0^t\int_{\T^d} \varrho\mathbb{S}(\D v):(\D u-\D v)\;\dd x\dd s - \int_0^t\int_{\T^d} p(\varrho)\ddiv v\;\dd x\dd s \\
&+ \nu\int_0^t\int_{\T^d} |\D u|^{q-2}\D u:\D v\;\dd x\dd s + \kappa\varepsilon\int_0^t\int_{\T^d}\varrho\nabla^2\ln\varrho:\nabla^2\ln r\;\dd x\dd s \\
&+\varepsilon\int_0^t\int_{\T^d} \nabla\varrho\otimes v:(\nabla u-\nabla v)\;\dd x\dd s + \kappa\int_0^t\int_{\T^d} \varrho\nabla^2\ln\varrho:\nabla v\;\dd x\dd s \\
&+ \int_0^t\int_{\T^d} \varrho(v-u)\cdot\partial_t v\;\dd x\dd s + \int_0^t\int_{\T^d} \varrho u\otimes(v-u):\nabla v\;\dd x\dd s \\
&+ \kappa\int_0^t\int_{\T^d} \varrho(\nabla\ln r-\nabla\ln\varrho)\cdot\partial_t\nabla\ln r\;\dd x\dd s \\
&+ \kappa\int_0^t\int_{\T^d} \varrho u\otimes(\nabla\ln r-\nabla\ln\varrho):\nabla^2\ln r\;\dd x\dd s - \kappa\int_0^t\int_{\T^d} \varrho \nabla u:\nabla^2\ln r\;\dd x\dd s \\
&+ \kappa\varepsilon\int_0^t\int_{\T^d} \varrho\nabla\ln\varrho\otimes(\nabla\ln\varrho-\nabla\ln r):\nabla^2\ln r\;\dd x\dd s \\
&- \int_0^t\int_{\T^d} \partial_t(H'(r))(\varrho-r) +\varrho u\cdot\nabla(H'(r))\;\dd x\dd s + \varepsilon\int_0^t\int_{\T^d} \nabla\varrho\cdot\nabla H'(r)\;\dd x\dd s
\end{aligned}\end{equation*}
for all smooth $(r,v)$.
\end{prop}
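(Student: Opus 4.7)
The strategy is to replay the proof of Proposition~\ref{prop:relative1} line by line, using the approximate energy inequality \eqref{energy_final_appr} in place of \eqref{energy} together with the regularised continuity and momentum equations from \eqref{full_approx}. I would start from the same decomposition
\begin{align*}
\mathcal{E}(t)-\mathcal{E}(0) =\;& E(t)-E(0) + \int_0^t\tfrac{\dd}{\dd s}\int_{\T^d}\Big[-\varrho u\cdot v+\tfrac{1}{2}\varrho|v|^2\Big]\,\dd x\dd s \\
& + \kappa\int_0^t\tfrac{\dd}{\dd s}\int_{\T^d}\Big[-\varrho\nabla\ln\varrho\cdot\nabla\ln r + \tfrac{1}{2}\varrho|\nabla\ln r|^2\Big]\,\dd x\dd s \\
& - \int_0^t\tfrac{\dd}{\dd s}\int_{\T^d}\Big[H(r)+H'(r)(\varrho-r)\Big]\,\dd x\dd s.
\end{align*}
All terms already present in Proposition~\ref{prop:relative1} reappear identically; the proof reduces to tracking the additional contributions produced by the regularising terms $\varepsilon\Delta\varrho$, $-\varepsilon\nabla\varrho\cdot\nabla u$ and $\nu\ddiv(|\D u|^{q-2}\D u)$. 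The dissipations $-\nu\int|\D u|^q$, $-\kappa\varepsilon\int\varrho|\nabla^2\ln\varrho|^2$ and $-\varepsilon\int H''(\varrho)|\nabla\varrho|^2$ appearing in \eqref{energy_final_appr} are non-positive and are simply dropped, which is what turns the equality into an inequality.

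For the first line, testing the regularised momentum equation against $v$ and integrating by parts produces, in addition to the terms of Proposition~\ref{prop:relative1}, the two extra contributions $\varepsilon\int(\nabla\varrho\cdot\nabla u)\cdot v$ and $\nu\int|\D u|^{q-2}\D u:\D v$ (using the algebraic identity $(\nabla\varrho\cdot\nabla u)\cdot v = \nabla\varrho\otimes v:\nabla u$ and the symmetry of $\D$). The new $\varepsilon\Delta\varrho$ in the continuity equation, inserted in $\tfrac{1}{2}\frac{\dd}{\dd t}\int\varrho|v|^2$, produces $\tfrac{\varepsilon}{2}\int\Delta\varrho|v|^2 = -\varepsilon\int(\nabla\varrho\otimes v):\nabla v$ after integration by parts, and combining the two yields exactly $\varepsilon\int\nabla\varrho\otimes v:(\nabla u-\nabla v)$ as in the statement.

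For the Korteweg-entropy line, the key observation is $\partial_t\nabla\varrho = -\nabla\ddiv(\varrho u)+\varepsilon\nabla\Delta\varrho$, and the new $\varepsilon$-piece contributes, after integration by parts,
\[
-\kappa\varepsilon\int_{\T^d}\nabla\Delta\varrho\cdot\nabla\ln r\,\dd x = \kappa\varepsilon\int_{\T^d}\nabla^2\varrho:\nabla^2\ln r\,\dd x.
\]
Using the pointwise identity $\nabla^2\varrho = \varrho\nabla^2\ln\varrho + \varrho\nabla\ln\varrho\otimes\nabla\ln\varrho$ this splits into $\kappa\varepsilon\int\varrho\nabla^2\ln\varrho:\nabla^2\ln r$ plus $\kappa\varepsilon\int\varrho\nabla\ln\varrho\otimes\nabla\ln\varrho:\nabla^2\ln r$. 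Analogously, $\tfrac{\kappa\varepsilon}{2}\int\Delta\varrho|\nabla\ln r|^2 = -\kappa\varepsilon\int\varrho\nabla\ln\varrho\otimes\nabla\ln r:\nabla^2\ln r$ by the same integration by parts; adding these contributions and regrouping $\nabla\ln\varrho\otimes\nabla\ln\varrho - \nabla\ln\varrho\otimes\nabla\ln r = \nabla\ln\varrho\otimes(\nabla\ln\varrho-\nabla\ln r)$ produces precisely the two $\kappa\varepsilon$ lines in the statement. For the $H$-line, substituting $\partial_t\varrho = -\ddiv(\varrho u)+\varepsilon\Delta\varrho$ into $-\int H'(r)\partial_t\varrho$ and integrating by parts yields the extra $\varepsilon\int\nabla\varrho\cdot\nabla H'(r)$ term.

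Finally, the monotonicity inequality $-\varrho\mathbb{S}(\D u):\D u + \varrho\mathbb{S}(\D u):\D v\leq -\varrho\mathbb{S}(\D v):(\D u-\D v)$ from \eqref{S2} is used exactly as in Proposition~\ref{prop:relative1}; crucially, no analogous trick is applied to the $\nu$-term, so that $+\nu\int|\D u|^{q-2}\D u:\D v$ remains displayed on the right-hand side (the balancing $-\nu\int|\D u|^q$ from \eqref{energy_final_appr} is simply dropped). The only real obstacle is bookkeeping: the $\varepsilon\kappa$ cross-terms arise from the subtle interplay between the parabolic regularisation of $\varrho$ and the second-order entropy, and the identity $\nabla^2\varrho = \varrho\nabla^2\ln\varrho + \varrho\nabla\ln\varrho\otimes\nabla\ln\varrho$ must be applied carefully with the paper's tensor conventions ($\nabla v = (\partial_j v_i)_{ij}$, $a\otimes b=(a_j b_i)_{ij}$) to land on the stated form.
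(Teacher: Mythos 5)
Your proposal is correct and follows essentially the same route as the paper: the same four-term decomposition of $\mathcal{E}(t)-\mathcal{E}(0)$ as in Proposition~\ref{prop:relative1}, insertion of the regularised continuity and momentum equations to track the extra $\varepsilon$- and $\nu$-contributions (including the identity $\nabla^2\varrho=\varrho\nabla^2\ln\varrho+\varrho\nabla\ln\varrho\otimes\nabla\ln\varrho$ for the $\kappa\varepsilon$ cross-terms and the integration by parts giving $\varepsilon\int\nabla\varrho\cdot\nabla H'(r)$), the monotonicity of $\mathbb{S}$ applied only to the stress term, and the dropping of the non-negative dissipation terms from the approximate energy inequality. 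No gaps.
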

\begin{proof}
 Since the calculations are analogous to the proof of Proposition~\ref{prop:relative1}, we focus only on the extra terms coming from the approximation. Similarly as in the proof of Proposition~\ref{prop:relative1}, we have
 \begin{align*} \mathcal{E}(t)-\mathcal{E}(0) =& E(t)-E(0)\\
&+ \int_0^t\int_{\T^d} \frac{\dd}{\dd t}\left(-\varrho u\cdot v + \frac{1}{2}\varrho|v|^2\right)\;\dd x\dd s \\
&+ \kappa\int_0^t\int_{\T^d} \frac{\dd}{\dd t}\left(-\varrho\nabla\ln\varrho\cdot \nabla\ln r+ \frac{1}{2}\varrho|\nabla\ln r|^2\right)\;\dd x\dd s \\
&- \int_0^t\int_{\T^d} \frac{\dd}{\dd t}\left(H(r)+H'(r)(\varrho-r)\right)\;\dd x\dd s \end{align*}
with
\begin{align*} \int_{\T^d}\frac{\dd}{\dd t}(-\varrho u\cdot v)\;\dd x =& -\int_{\T^d} \varrho u\otimes u:\nabla v\;\dd x + \int_{\T^d} \varrho\mathbb{S}(\D u):\D v\;\dd x - \int_{\T^d} p\ddiv v\;\dd x \\
&+ \kappa\int_{\T^d} \varrho\nabla^2\ln\varrho:\nabla v\;\dd x - \int_{\T^d} \varrho u\cdot\partial_tv\;\dd x \\
&+ \nu\int_0^t\int _{\T^d}|\D u|^{q-2}\D u:\D v\;\dd x\dd s + \varepsilon\int_0^t\int_{\T^d} (\nabla\varrho\cdot\nabla u)\cdot v\;\dd x\dd s
\end{align*}
and
\[ \int_{\T^d} \frac{\dd}{\dd t}\left(\frac{1}{2}\varrho|v|^2\right)\;\dd x = \int_{\T^d} \varrho u\otimes v:\nabla v\;\dd x - \varepsilon\int_0^t\int_{\T^d} (\nabla\varrho\cdot\nabla v)\cdot v\;\dd x\dd s + \int_{\T^d} \varrho v\cdot\partial_t v\;\dd x. \]
For the terms depending on $\kappa$, we have
\[
 \partial_t(\varrho\nabla\ln\varrho) =
 \partial_t\nabla \rho = -\ddiv(\varrho u\otimes\nabla\ln\varrho) - \ddiv(\varrho(\nabla u)^{T} ) + \varepsilon\nabla\Delta\varrho,
\]
and thus
\begin{align*}
\kappa\int_{\T^d} \frac{\dd}{\dd t}\left(-\varrho\nabla\ln\varrho\cdot \nabla\ln r+ \frac{1}{2}\varrho|\nabla\ln r|^2\right)\;\dd x =& -\kappa\int_{\T^d} \varrho u\otimes\nabla\ln\varrho:\nabla^2\ln r\;\dd x \\
&- \kappa\int_{\T^d} \varrho \nabla u :\nabla^2\ln r\;\dd x \\
&+ \kappa\varepsilon\int_0^t\int_{\T^d} \varrho\nabla^2\ln\varrho:\nabla^2\ln r\;\dd x\dd s \\
&+ \kappa\varepsilon\int_0^t\int_{\T^d}\varrho\nabla\ln\varrho\otimes\nabla\ln\varrho:\nabla^2\ln r\;\dd x\dd s \\
&- \kappa\int_{\T^d} \varrho\nabla\ln\varrho\cdot\partial_t\nabla\ln r \;\dd x\dd s \\
&+ \kappa\int_{\T^d}\varrho u\otimes\nabla\ln r:\nabla^2\ln r\;\dd x \\
&- \kappa\varepsilon\int_0^t\int_{\T^d} \varrho\nabla\ln\varrho\otimes\nabla\ln r:\nabla^2\ln r\;\dd x\dd s \\
&+ \kappa\int_{\T^d} \varrho\nabla\ln r\cdot\partial_t\nabla\ln r\;\dd x,
\end{align*}
where we used the fact that
\[ -\int_{\T^d} \nabla\Delta\varrho\cdot\nabla\ln r\;\dd x = \int_{\T^d} \nabla^2\varrho:\nabla^2\ln r\;\dd x = \int_{\T^d} \varrho\nabla^2\ln\varrho:\nabla^2\ln r\;\dd x + \int_{\T^d} \varrho\nabla\ln\varrho\otimes\nabla\ln\varrho:\nabla^2\ln r\;\dd x. \]
Combining all the terms, we obtain
\begin{align*}
\mathcal{E}(t)-\mathcal{E}(0) \leq & -\int_0^t\int_{\T^d} \varrho\mathbb{S}(\D u):(\D u-\D v)\;\dd x\dd s - \int_0^t\int_{\T^d} p\ddiv v\;\dd x\dd s -\varepsilon\int_0^t\int_{\T^d}H''(\varrho)|\nabla\varrho|^2\;\dd x\dd s \\
&+ \nu\int_0^t\int_{\T^d} |\D u|^{q-2}\D u:\D v\;\dd x\dd s - \kappa\varepsilon\int_0^t\int_{\T^d} \varrho\nabla^2\ln\varrho:(\nabla^2\ln\varrho-\nabla^2\ln r)\;\dd x\dd s \\
&+\varepsilon\int_0^t\int_{\T^d} v\otimes \nabla\varrho:(\nabla u-\nabla v)\;\dd x\dd s + \kappa\int_0^t\int_{\T^d} \varrho\nabla^2\ln\varrho:\nabla v\;\dd x\dd s \\
&+ \int_0^t\int_{\T^d} \varrho(v-u)\cdot\partial_t v\;\dd x\dd s + \int_0^t\int_{\T^d} \varrho u\otimes(v-u):\nabla v\;\dd x\dd s \\
&+ \kappa\int_0^t\int_{\T^d} \varrho(\nabla\ln r-\nabla\ln\varrho)\cdot\partial_t\nabla\ln r\;\dd x\dd s \\
&+ \kappa\int_0^t\int_{\T^d} \varrho u\otimes(\nabla\ln r-\nabla\ln\varrho):\nabla^2\ln r\;\dd x\dd s - \kappa\int_0^t\int_{\T^d} \varrho \nabla u:\nabla^2\ln r\;\dd x\dd s \\
&+ \kappa\varepsilon\int_0^t\int_{\T^d} \varrho\nabla\ln\varrho\otimes(\nabla\ln\varrho-\nabla\ln r):\nabla^2\ln r\;\dd x\dd s \\
&- \int_0^t\int_{\T^d} \frac{\dd}{\dd t}\left(H(r)+H'(r)(\varrho-r)\right)\;\dd x\dd s.
\end{align*}
Analogously as before, we use the inequality
\[ -\varrho\mathbb{S}(\D u):(\D u-\D v) \leq -\varrho\mathbb{S}(\D v):(\D u-\D v). \]
For the last term, similarly as before we have
\[ \int_{\T^d} \frac{\dd}{\dd t}\left(H(r)+H'(r)(\varrho-r)\right)\;\dd x = \int_{\T^d} \partial_t(H'(r))(\varrho-r) + \varrho u\cdot\nabla(H'(r))\;\dd x - \varepsilon\int_{\T^d} \nabla\varrho\cdot\nabla H'(r)\;\dd x, \]
which ends the proof.
\end{proof}

Performing the same operations as in the proof of Proposition~\ref{prop_rel2}, we obtain the analog of Proposition~\ref{prop_rel2}, with the extra error terms depending on $\nu,\varepsilon$:
\begin{prop}
 We have
 \begin{equation*}
\begin{aligned} \mathcal{E}(t)-\mathcal{E}(0) \leq & \int_0^t\int_{\T^d} \varrho(u-v)\otimes(\nabla\ln\varrho-\nabla\ln r):\mathbb{S}(\D v)\;\dd x\dd s \\
&- \int_0^t\int_{\T^d} \varrho(u-v)\otimes(u-v):\nabla v\;\dd x\dd s \\
&-\kappa\int_0^t\int_{\T^d} \varrho(\nabla\ln\varrho-\nabla\ln r)\otimes(\nabla\ln\varrho-\nabla\ln r):\nabla v\;\dd x\dd s \\
&- \int_0^t\int_{\T^d} \varrho(u-v)\left[\partial_t v + (v\cdot\nabla)v - \frac{1}{r}\ddiv(r\mathbb{S}(\D v)) + \frac{1}{r}\nabla p(r) -\kappa\frac{1}{r}\ddiv(r\nabla^2\ln r)\right]\;\dd x\dd s \\
&- \kappa\int_0^t\int_{\T^d} \varrho(\nabla\ln\varrho-\ln r)\left[\partial_t\nabla\ln r+(v\cdot\nabla)\nabla\ln r + \frac{1}{r}\ddiv(r\nabla v)\right]\;\dd x\dd s \\
&- \int_0^t\int_{\T^d}(\varrho-r)\bigg[\partial_tH'(r)+v\cdot\nabla H'(r) + p'(r)\ddiv v\bigg] \;\dd x\dd s \\
&- \int_0^t\int_{\T^d} (p(\varrho)-p(r)-p'(r)(\varrho-r))\ddiv v\;\dd x\dd s \\
&+ \nu\int_0^t\int_{\T^d} |\D u|^{q-2}\D u:\D v\;\dd x\dd s + \kappa\varepsilon\int_0^t\int_{\T^d} \varrho\nabla^2\ln\varrho:\nabla^2\ln r\;\dd x\dd s \\
&+ \varepsilon\int_0^t\int_{\T^d} v\otimes \nabla\varrho:(\nabla u-\nabla v)\;\dd x\dd s + \kappa\varepsilon\int_0^t\int_{\T^d} \varrho\nabla\ln\varrho\otimes(\nabla\ln\varrho-\nabla\ln r):\nabla^2\ln r\;\dd x\dd s \\
&+ \varepsilon\int_0^t\int_{\T^d} \nabla\varrho\cdot\nabla H'(r)\;\dd x\dd s.
\end{aligned}
\end{equation*}
\end{prop}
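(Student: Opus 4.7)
The proof is a mechanical extension of the argument for Proposition~\ref{prop_rel2}, so the plan is to replay that argument term-by-term starting from the approximated relative entropy inequality just established. Denote the right-hand side of that inequality as $\sum_{i=1}^{9} I_i$ together with the new $\nu$- and $\varepsilon$-dependent pieces, where the nine $I_i$ correspond exactly to the terms labeled in the proof of Proposition~\ref{prop_rel2}. The crucial observation is that each approximation term is already linear in a single test quantity (or multiplied by $\nabla\varrho$, $\nabla^2\ln\varrho$, etc.), so none of them interferes with the integration-by-parts manipulations used to rewrite the $I_i$.

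First step: handle $I_1$, $I_3$, $I_5$, $I_7+I_8$ exactly as in Proposition~\ref{prop_rel2}. $I_1$ is integrated by parts against $u-v$; the identity $\ddiv(\varrho X) = \varrho\ddiv X + \varrho\nabla\ln\varrho\cdot X$ combined with the $\pm\nabla\ln r$ insertion yields the $\frac{1}{r}\ddiv(r\mathbb{S}(\D v))$ contribution to $\mathcal{A}_1$ together with the cross term $\int \varrho(u-v)\otimes(\nabla\ln\varrho-\nabla\ln r):\mathbb{S}(\D v)$. The same $\pm\nabla\ln r$ trick applied to $I_3$ produces the quadratic $(\nabla\ln\varrho-\nabla\ln r)^{\otimes 2}$ term, the $\frac{1}{r}\ddiv(r(\nabla v)^T)$ piece of $\mathcal{A}_2$, and a leftover $\kappa\int \varrho\nabla^2\ln r:\nabla v$ which assembles with $I_7+I_8$ into the Korteweg part of $\mathcal{A}_1$. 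Finally $I_5$ yields the quadratic $(u-v)^{\otimes 2}$ term together with the convective contribution to $\mathcal{A}_1$.

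Second step: combine $I_2$ and $I_9$ using $rH''(r)=p'(r)$ exactly as before, extracting the $\mathcal{A}_3$ term, the $\frac{1}{r}\nabla p(r)$ piece of $\mathcal{A}_1$, and the pressure excess $p(\varrho)-p(r)-p'(r)(\varrho-r)$. The approximation remainders — the $\nu$-Smagorinsky term, the $\kappa\varepsilon\int\varrho\nabla^2\ln\varrho:\nabla^2\ln r$, the $\varepsilon\int v\otimes\nabla\varrho:(\nabla u-\nabla v)$, the $\kappa\varepsilon\int\varrho\nabla\ln\varrho\otimes(\nabla\ln\varrho-\nabla\ln r):\nabla^2\ln r$, and the $\varepsilon\int\nabla\varrho\cdot\nabla H'(r)$ — sit in positions unaffected by the manipulations above and are simply carried through unchanged: they originate in the $\nu\ddiv(|\D u|^{q-2}\D u)$ term, in the $\varepsilon\Delta\varrho$ diffusions that enter the transport identities for $\varrho$ and $\nabla\ln\varrho$ (the latter via the additional $\varepsilon\nabla\Delta\varrho$ contribution to $\partial_t(\varrho\nabla\ln\varrho)$), and in the $-\varepsilon\nabla\varrho\cdot\nabla u$ momentum correction.

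The main obstacle is bookkeeping — matching every $\pm\nabla\ln r$ and $\pm 1/r$ insertion and verifying that the $\kappa\varepsilon$ piece produced when $\varepsilon\nabla\Delta\varrho$ is integrated by parts against $\nabla\ln r$ (which generates precisely the extra term $\kappa\varepsilon\int \varrho\nabla\ln\varrho\otimes(\nabla\ln\varrho-\nabla\ln r):\nabla^2\ln r$ after a further $\pm\nabla\ln r$ splitting in $\int \nabla^2\varrho:\nabla^2\ln r$) lands on the error side of the inequality rather than cancelling something essential. Once the signs and splittings line up, no new estimate beyond those in the original proof is needed.
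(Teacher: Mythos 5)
Your proposal is correct and coincides with the paper's own (essentially unwritten) proof: the paper simply states that one performs the same operations as in the proof of Proposition~\ref{prop_rel2} on the approximated relative entropy inequality, carrying the $\nu$- and $\varepsilon$-remainders through unchanged, which is exactly your term-by-term replay of $I_1$, $I_3$, $I_5$, $I_7+I_8$ and $I_2+I_9$. Your accounting of where the extra terms originate (the Smagorinsky term, the $\varepsilon\Delta\varrho$ diffusion entering both $\partial_t(\varrho\nabla\ln\varrho)$ and the $H'(r)$ terms, and the $-\varepsilon\nabla\varrho\cdot\nabla u$ correction) matches the computation already done in the preceding proposition of the paper.
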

Now, applying the Gronwall's lemma, we derive the analog of \eqref{relative_gronwall}: 
\begin{equation}\label{relative_gronwall_app} \mathcal{E}(t)\leq \mathcal{E}(0)e^{Ct} + \int_0^t b(s)e^{C(t-s)}\;\dd s + \int_0^t b_\mathrm{app}(s)e^{C(t-s)}\;\dd s,
\end{equation}
where
\[ b(t) = -\int_{\T^d} \varrho(u-v)\cdot\mathcal{A}_1\;\dd x - \kappa\int_{\T^d} \varrho(\nabla\ln\varrho-\nabla\ln r)\cdot\mathcal{A}_2\;\dd x
- \int_{\T^d} (\varrho-r)\cdot\mathcal{A}_3\;\dd x \]
for $\mathcal{A}_1$-$\mathcal{A}_3$ given by \eqref{A}, and
\begin{align*}
b_\mathrm{app}(t) =& \nu\int_{\T^d} |\D u|^{q-2}\D u:\D v\;\dd x + \kappa\varepsilon\int_{\T^d} \varrho\nabla^2\ln\varrho:\nabla^2\ln r\;\dd x + \varepsilon\int_{\T^d} v\otimes\nabla\varrho:(\nabla u-\nabla v)\;\dd x \\
&+ \kappa\varepsilon\int_{\T^d} \varrho\nabla\ln\varrho\otimes(\nabla\ln\varrho-\nabla\ln r):\nabla^2\ln r\;\dd x + \varepsilon\int_{\T^d} \nabla\varrho\cdot\nabla H'(r)\;\dd x.
\end{align*}
Note that the constant $C$ is the same as in \eqref{relative_gronwall} and thus depends only on the $L^\infty$ norm of $\nabla v$.

\subsection{Limit passage with $\nu,\varepsilon\to 0$.} 
Now we pass to the limit in \eqref{relative_gronwall_app} with the approximating parameters, and in consequence obtain a dissipative solution to \eqref{NSK}, in the sense of Definition~\ref{def:dissipative}. Let $\nu=\varepsilon$ and let $(\varrho_\varepsilon, u_\varepsilon)$ be a weak solution to \eqref{full_approx}. By the energy estimate \eqref{energy_final_appr} and \eqref{S1} we have the uniform estimates
\[ \|\rho_\varepsilon\|_{L^\infty(0,T;L^\gamma(\T^d))} +
\|\sqrt{\varrho_\varepsilon}u_\varepsilon\|_{L^\infty(0,T;L^2(\T^d))} +
 \|\sqrt{\varrho_\varepsilon}\|_{L^\infty(0,T;H^1(\T^d))}
+ \|\varrho_\varepsilon^{1/p}\D u_\varepsilon\|_{L^p(0,T;L^p(\T^d))} \leq C \]
with $C$ which does not depend on $\varepsilon$ and $\nu$.
In particular, there exists $m\in L^\infty(0,T;L^2)$ such that
\[ \sqrt{\varrho_\varepsilon} u_\varepsilon \rightharpoonup^* m \quad \text{in} \quad L^\infty(0,T;L^2). \]
Moreover, the estimate (\ref{energy_final_appr}) also provides that $\varepsilon^{1/q}\|\D u_\varepsilon\|_{L^q([0,T]\times\T^d)}\leq C$ and 
\[ \kappa\varepsilon\int_0^T\!\!\int_{\T^d}\varrho_\varepsilon|\nabla^2\ln\varrho_\varepsilon|^2\;\dd x\dd t \leq C, \]
from which we again deduce the bounds (see \eqref{rel1}-\eqref{rel2})
\begin{equation*} 
(\varepsilon\kappa)^{1/4}\|\nabla\varrho_\varepsilon^{1/4}\|_{L^4(0,T;L^4)} + \sqrt{\varepsilon\kappa}\|\sqrt{\varrho_\varepsilon}\|_{L^2(0,T;H^2)} \leq C. \end{equation*}
In particular, by \eqref{eq:DeltaRho}, we infer that
\[ \partial_t\varrho_\varepsilon = -\ddiv(\varrho_\varepsilon u_\varepsilon) + 2\varepsilon\sqrt{\varrho_\varepsilon}(4|\nabla\varrho_\varepsilon^{1/4}|^2 + \Delta\sqrt{\varrho_\varepsilon}) \]
is uniformly bounded in $L^2(0,T;W^{-1,3/2})$ and therefore from the estimate on $\|\sqrt{\varrho_\varepsilon}\|_{L^\infty(0,T;H^1)}$ and Aubin--Lions--Simon lemma we get
\[ \sqrt{\varrho_\varepsilon} \to \sqrt{\varrho} \quad \text{in} \quad L^2(0,T;L^4) \]
and
\[ \nabla\sqrt{\varrho_\varepsilon} \rightharpoonup^* \nabla\sqrt{\varrho} \quad \text{in} \quad L^\infty(0,T;L^2). \]

Defining $u(t,x) = \frac{m(t,x)}{\sqrt{\varrho(t,x)}}$ for $\varrho(t,x)>0$, $u=0$ otherwise, we have $m=\sqrt{\varrho} u$. Then, from the strong convergence of $\sqrt{\varrho_\varepsilon}$ we also have
\[ \varrho_\varepsilon u_\varepsilon = \sqrt{\varrho_\varepsilon}\sqrt{\varrho_\varepsilon}u_\varepsilon\rightharpoonup^* \varrho u \quad \text{in} \quad L^2(0,T;L^{4/3}(\T^d)). \]
The above convergence allows us to pass to the limit in all terms contained in $b(t)$. For $b_\mathrm{app}$, we have that
\[ \varepsilon\left|\int_0^t\int_{\T^d} |\D u_\varepsilon|^{q-2}\D u_\varepsilon:\D v\;\dd x\dd s\right| \leq C(v)\varepsilon^{1/q}
\|\varepsilon^{1/q}\D u_\varepsilon\|_{L^q([0,T]\times\T^d)}^{q-1}, \]
and
\[ \kappa\varepsilon\left|\int_0^t\int_{\T^d} \varrho_\varepsilon\nabla^2\ln\varrho_\varepsilon:\nabla^2\ln r\;\dd x\dd s\right| \leq C(r)\sqrt{\kappa\varepsilon}\| \sqrt{\rho_\varepsilon}\|_{L^2([0,T]\times \T^d)}\left(\kappa\varepsilon\int_0^T\int_{\T^d} \varrho_\varepsilon|\nabla^2\ln\varrho_\varepsilon|^2\;\dd x\dd t\right)^{1/2}\]
tend to zero when $\epsilon\to 0$.
Using \eqref{eq:DeltaRho}, we write
\begin{align*}
\varepsilon\int_0^t\int_{\T^d} \nabla\varrho_\varepsilon\otimes v:(\nabla u_\varepsilon-\nabla v)\;\dd x\dd s =& -\varepsilon\int_0^t\int_{\T^d} (u_\varepsilon-v)\cdot v \Delta\varrho_\varepsilon\;\dd x\dd s \\
&- \varepsilon\int_0^t\int_{\T^d} \nabla\varrho_\varepsilon \otimes(u_\varepsilon-v):\nabla v\;\dd x\dd s \\
=& -\varepsilon\int_0^t\int_{\T^d} \sqrt{\varrho_\varepsilon}(u_\varepsilon-v)\cdot v(8|\nabla\varrho_\varepsilon^{1/4}|^2+2\Delta\sqrt{\varrho_\varepsilon})\;\dd x\dd s \\
&- \varepsilon\int_0^t\int_{\T^d} 2\sqrt{\varrho_\varepsilon}\nabla\sqrt{\varrho_\varepsilon}\otimes(u_\varepsilon-v) :\nabla v\;\dd x\dd s \\
\leq & \sqrt{\varepsilon}C(v)\|\sqrt{\varrho_\varepsilon}(u_\varepsilon-v)\|_{L^\infty(0,T;L^2(\T^d))}\Big(\sqrt{\varepsilon}
\|\nabla\varrho_\varepsilon^{1/4}\|_{L^4([0,T]\times\T^d)}^2 \\
&+ \sqrt{\varepsilon}\|\Delta\sqrt{\varrho_\varepsilon}\|_{L^2([0,T]\times\T^d)} + \|\nabla\sqrt{\varrho_\varepsilon}\|_{L^\infty(0,T;L^2(\T^d))}\Big),
\end{align*}
which also goes to zero by recalling \eqref{rel1}-\eqref{rel2} and that $\kappa \varepsilon \int_0^T\int_{\T^d} \rho |\nabla\nabla \ln\rho |^2$ is bounded uniformly. Noting that
\begin{multline*}
 \kappa\varepsilon\left|\int_0^t\int_{\T^d} \varrho_\varepsilon\nabla\ln\varrho_\varepsilon\otimes(\nabla\ln\varrho_\varepsilon-\nabla\ln r):\nabla^2\ln r\;\dd x\dd s\right| \leq C(r)\Big(\varepsilon\|\nabla\sqrt{\varrho_\varepsilon}\|_{L^\infty(0,T;L^2(\T^d))}^2\\
+\varepsilon\|\sqrt{\varrho_\varepsilon}\|_{L^\infty(0,T;L^6(\T^d))}\|\nabla\sqrt{\varrho_\varepsilon}\|_{L^\infty(0,T;L^2(\T^d))}, 
\end{multline*}
and
\[ \varepsilon\left|\int_0^t\int_{\T^d} \nabla\varrho_\varepsilon\cdot\nabla H'(r)\;\dd x\dd s\right| \leq C(r)\varepsilon\|\sqrt{\varrho_\varepsilon}\|_{L^\infty(0,T;L^6)}\|\nabla\sqrt{\varrho_\varepsilon}\|_{L^\infty(0,T;L^2(\T^d))}. \]
we conclude that 
\[ \int_0^t b_\mathrm{app}(s)e^{C(t-s)}\;\dd s\to 0 \quad \text{a.e. as} \quad \varepsilon\to 0. \]
We finish the proof using the weakly lower-semicontinuity of convex functions; we get
\[ \mathcal{E}[(\varrho,u)|(r,v)] \leq \liminf_{\varepsilon\to 0}\mathcal{E}[(\varrho_\varepsilon,u_\varepsilon)|(r,v)] \leq \mathcal{E}(0)e^{Ct} + \int_0^t b(s)e^{C(t-s)}\;\dd s \]
and therefore $(\varrho,u)$ is a dissipative solution of \eqref{NSK}.

\paragraph{Acknowledgements.} The first author and the second author gratefully acknowledge the partial support by the Agence Nationale pour la Recherche grant ANR-23-CE40-0014-01 (ANR Bourgeons). This work also benefited of the support of the ANR under France 2030 bearing the reference ANR-23-EXMA-004 (Complexflows project). The third author gratefully acknowledges the Mathematical Institute of Planet Earth (IMPT) in France for a two-years post-doctoral grant, the chair MIRE at Univ. Savoie Mont Blanc and the Polish National Science Centre grant no. 2022/45/N/ST1/03900 (Preludium) for partially supporting this work.

\begin{appendices}

\section{Auxiliary lemmas}
\subsection{Poncar\'e inequality.}
Below we present the special case of the Poincar\'e inequality, which is the generalization of the Remark 5.1 from \cite{lions}.
\begin{lem}\label{poincare}
For $q>d$ and sufficiently regular $\varrho,u$ such that $\varrho\geq 0$, $\|\varrho\|_{L^1}>0$, the following estimate holds:
\[ \|u\|_{L^q} \leq \frac{1}{\|\varrho\|_{L^1}}\left|\int_{\T^d}\varrho u\;\dd x\right| + C\|\nabla u\|_{L^q} \]
for some $C$ depending on $d$.
\end{lem}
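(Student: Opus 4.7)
The plan is to reduce the statement to the standard Poincar\'e inequality on the torus together with the Morrey embedding $W^{1,q}(\T^d)\hookrightarrow L^\infty(\T^d)$, valid precisely because $q>d$. The weight $\varrho$ will enter only through the definition of a weighted mean, never through the constant.

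First I would introduce the two averages
\[
\bar u = \frac{1}{|\T^d|}\int_{\T^d} u\,\dd x, \qquad u_\varrho = \frac{1}{\|\varrho\|_{L^1}}\int_{\T^d}\varrho u\,\dd x,
\]
and split
\[
\|u\|_{L^q}\leq \|u-u_\varrho\|_{L^q} + |u_\varrho|\,|\T^d|^{1/q}
\leq \|u-\bar u\|_{L^q}+|\bar u-u_\varrho|\,|\T^d|^{1/q}+|u_\varrho|\,|\T^d|^{1/q}.
\]
The first term is controlled by the classical Poincar\'e inequality on $\T^d$, namely $\|u-\bar u\|_{L^q}\leq C(d)\|\nabla u\|_{L^q}$.

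Next I would handle the discrepancy between the two means. Since $\int_{\T^d}\varrho\,\bar u\,\dd x=\bar u\,\|\varrho\|_{L^1}$, one has the identity
\[
u_\varrho-\bar u=\frac{1}{\|\varrho\|_{L^1}}\int_{\T^d}\varrho\,(u-\bar u)\,\dd x,
\]
so by positivity of $\varrho$,
\[
|u_\varrho-\bar u|\leq \|u-\bar u\|_{L^\infty}.
\]
This is the step that decouples the constant from $\varrho$: the weight drops out as soon as one bounds by the sup norm. The hypothesis $q>d$ now enters through Morrey's embedding combined with Poincar\'e, which gives
\[
\|u-\bar u\|_{L^\infty}\leq C(d)\|u-\bar u\|_{W^{1,q}}\leq C(d)\|\nabla u\|_{L^q}.
\]

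Combining these estimates yields $\|u-u_\varrho\|_{L^q}\leq C(d)\|\nabla u\|_{L^q}$ and hence the desired inequality, since $|u_\varrho|\,|\T^d|^{1/q}$ equals $\|\varrho\|_{L^1}^{-1}\,|\int \varrho u|$ up to the normalisation $|\T^d|=1$ used throughout the paper. The main (and only) subtlety is precisely the decoupling step: the bound of $u_\varrho-\bar u$ must not involve $\varrho$, and the cleanest way is through the $L^\infty$ norm of $u-\bar u$, which in turn forces the Morrey threshold $q>d$. Everything else is triangle inequality and the standard Poincar\'e inequality on the flat torus.
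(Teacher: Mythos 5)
Your proof is correct and follows essentially the same route as the paper: both arguments hinge on bounding the discrepancy between the weighted mean and the standard mean by $\|u-\bar u\|_{L^\infty}\leq C\|\nabla u\|_{L^q}$ via Morrey (which is where $q>d$ enters), and then invoking the classical Poincar\'e inequality for $\|u-\bar u\|_{L^q}$; the only cosmetic difference is that you phrase the middle step through the weighted average $u_\varrho$ while the paper bounds $|\bar u|$ directly. The shared harmless imprecision (the factor $|\T^d|^{1/q}$, absorbed by the normalisation of the torus) appears in both versions.
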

\begin{proof}
 From Sobolev embedding and Poincar\'e inequality, we have
 \[ \left|\int_{\T^d} \varrho\left[u-\frac{1}{|\T^d|}\int_{\T^d}u\;\dd y\right]\;\dd x\right| \leq C\|\varrho\|_{L^1}\|\nabla u\|_{L^q}. \]
 Therefore, 
 \[\begin{aligned} \|\varrho\|_{L^1}\left|\frac{1}{|\T^d|}\int_{\T^d}u\;\dd x\right| =& \left|\int_{\T^d}\varrho u\;\dd x - \int_{\T^d}\varrho\left[u-\frac{1}{|\T^d|}\int_{\T^d}u\;\dd y\right]\;\dd x\right| \\
 \leq & \left|\int_{\T^d}\varrho u\;\dd x\right| + C\|\varrho\|_{L^1}\|\nabla u\|_{L^q}. \end{aligned}\]
 Using Poincar\'e inequality again, we finish the proof.
\end{proof}

\subsection{Monotonicity property.}
Let us now present a lemma related to a non-linear functional
\begin{lem}\label{q_lem}
Let $F(A)=|A|^{q-2}A$ for $q>2$ and $A\in\mathbb{R}^{d\times d}$. Then
\[ (F(A)-F(B)):(A-B) \geq C_q|A-B|^q \quad \text{for all} \quad A,B\in\mathbb{R}^{d\times d}, \]
where
\[ C_q = \min\left(\frac{1}{2},\frac{1}{2^{q-2}}\right). \]
\end{lem}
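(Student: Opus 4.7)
My plan is to reduce everything to an elementary algebraic identity for $(F(A)-F(B)):(A-B)$ and then estimate the two pieces separately, using only the triangle inequality $|A|+|B|\ge |A-B|$ and elementary convexity/concavity of the power $x\mapsto x^{q-2}$. No integration along segments is needed, which is why the constant comes out in the clean form $\min(1/2,1/2^{q-2})$ rather than the sharper but messier constant one obtains by the mean-value / $DF$-integration method.

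The first step is the identity
\begin{equation*}
(F(A)-F(B)):(A-B)
\;=\;\frac{(|A|^{q-2}-|B|^{q-2})(|A|^2-|B|^2)}{2}
\;+\;\frac{(|A|^{q-2}+|B|^{q-2})\,|A-B|^2}{2},
\end{equation*}
obtained by expanding $F(A):A-F(A):B-F(B):A+F(B):B=|A|^q+|B|^q-(|A|^{q-2}+|B|^{q-2})A:B$ and substituting $A:B=(|A|^2+|B|^2-|A-B|^2)/2$. Since $q\ge 2$, the map $x\mapsto x^{q-2}$ is nondecreasing on $[0,\infty)$, so $|A|^{q-2}-|B|^{q-2}$ and $|A|^2-|B|^2$ share their sign, making the first bracket nonnegative. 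Hence it suffices to bound the second bracket from below by $C_q|A-B|^q$, i.e.\ to show $(|A|^{q-2}+|B|^{q-2})\ge 2C_q|A-B|^{q-2}$.

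The second step handles this bound by splitting on the value of $q$. When $q\ge 3$, the function $x\mapsto x^{q-2}$ is convex, so Jensen gives $\tfrac12(|A|^{q-2}+|B|^{q-2})\ge\bigl((|A|+|B|)/2\bigr)^{q-2}$, and combined with the triangle inequality $(|A|+|B|)/2\ge |A-B|/2$ this yields $|A|^{q-2}+|B|^{q-2}\ge |A-B|^{q-2}/2^{q-3}$, whence the second term is at least $|A-B|^q/2^{q-2}$. When $2<q<3$, the same map is concave with $\varphi(0)=0$, hence subadditive on $[0,\infty)$, so $(|A|+|B|)^{q-2}\le |A|^{q-2}+|B|^{q-2}$, and since $x^{q-2}$ is still increasing we get $|A|^{q-2}+|B|^{q-2}\ge(|A|+|B|)^{q-2}\ge |A-B|^{q-2}$, producing a lower bound $|A-B|^q/2$. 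Assembling the two regimes gives exactly the advertised constant $C_q=\min(1/2,1/2^{q-2})$.

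There is no real obstacle: everything reduces to the algebraic identity and a two-line convexity/concavity argument. The only subtlety to be a little careful with is the case split at $q=3$, since it is precisely the breakpoint where the monotonicity direction of Jensen switches and where $1/2$ and $1/2^{q-2}$ cross. One should also note that the identity in the first step is valid for any symmetric $A,B$, so no separate treatment of $A=0$ or $B=0$ is needed.
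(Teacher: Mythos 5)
Your proof is correct, and it is essentially the same argument as the paper's: the same algebraic identity for $(F(A)-F(B)):(A-B)$ (the paper uses it implicitly after multiplying the convexity inequality by $|A-B|^2$), the same sign observation on the term $(|A|^{q-2}-|B|^{q-2})(|A|^2-|B|^2)$, and the same case split at $q=3$ using convexity of $x\mapsto x^{q-2}$ for $q\geq 3$ and subadditivity for $2<q<3$, yielding the identical constant $C_q$. No changes needed.
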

\begin{proof}
 We consider separately the cases $q\geq 3$ and $2<q<3$.
 
 For $q\geq 3$ we use the convexity of the function $A\mapsto |A|^{q-2}$, so that
 \[ \frac{1}{2^{q-2}}|A-B|^{q-2} \leq \frac{1}{2}|A|^{q-2} + \frac{1}{2}|B|^{q-2}. \]
 Multiplying this inequality by $|A-B|^2$, we get
 \[\begin{aligned}
 \frac{1}{2^{q-2}}|A-B|^q \leq & \frac{1}{2}|A|^q + \frac{1}{2}|B|^q - (|A|^{q-2}+|B|^{q-2})A:B + \frac{1}{2}|A|^{q-2}|B|^2 + \frac{1}{2}|B|^{q-2}|A|^2 \\
 =& (|A|^{q-2}A-|B|^{q-2}B):(A-B) - \frac{1}{2}(|A|^{q-2}-|B|^{q-2})(|A|^2-|B|^2) \\
 \leq & (|A|^{q-2}A-|B|^{q-2}B):(A-B),
 \end{aligned}\]
 where we used the fact that $\theta\mapsto \theta^{\frac{q-2}{2}}$ is increasing for $\theta\geq 0$.
 
 For the case $2\leq p<3$ we use the inequality $(a+b)^\alpha\leq a^\alpha + b^\alpha$ for $a,b>0$, $0<\alpha<1$. That way we get
 \[ \frac{1}{2}|A-B|^{q-2}\leq \frac{1}{2}||A|+|B||^{q-2}\leq \frac{1}{2}|A|^{q-2} + \frac{1}{2}|B|^{q-2} \]
 and we proceed the same as in the previous case.
\end{proof}

\subsection{Property of the pressure.}
In this subsection we present the bound between $p$ and the relative entropy functional.
\begin{prop}\label{P_prop}
Let $r\in [\ushort{r},\bar{r}]$ for some $\ushort{r},\bar{r}>0$. If $p$ satisfies \eqref{P1}-\eqref{P2} and $H$ is given by $H=\displaystyle\varrho\int_{\bar{\varrho}}^\varrho {p(s)}/{s^2}\;\dd s$ for some constant $\bar{\varrho}\geq 0$, then for all $\varrho\in [0,\infty)$ it holds
\[ \big|p(\varrho)-p(r)-p'(r)(\varrho-r)\big| \leq C\big(H(\varrho)-H(r)-H'(r)(\varrho-r)\big) \]
for some $C>0$ depending on $\ushort{r},\bar{r}$.
\end{prop}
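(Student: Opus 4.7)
The plan is to split $\varrho \ge 0$ into three regions determined by thresholds depending only on $\ushort{r}$ and $\bar r$, and bound the ratio separately on each. The key observation is that $H''(\varrho) = p'(\varrho)/\varrho > 0$ on $(0,\infty)$ by \eqref{P1}, so $H$ is strictly convex there, and the natural scale of comparison comes from writing both sides via Taylor's formula with integral remainder,
\[
 p(\varrho) - p(r) - p'(r)(\varrho - r) = \int_r^\varrho (\varrho - s)\,p''(s)\,\dd s, \qquad H(\varrho|r) = \int_r^\varrho (\varrho - s)\,\frac{p'(s)}{s}\,\dd s.
\]

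Fix $0 < \delta < \ushort{r}/2$ and $R > 2\bar r$ to be specified. On the bounded intermediate range $\varrho \in [\delta, R]$, the Taylor representations above are controlled on the compact interval $I := [\min(\delta,\ushort{r}), \max(R,\bar r)] \subset (0,\infty)$, where $|p''|$ is bounded above and $p'(s)/s$ is bounded below by a positive constant (since $p' > 0$ and continuous on $I$). Taking the ratio yields the claimed bound with a constant depending only on $\ushort{r}, \bar r, \delta, R$.

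For the small-density range $\varrho \in [0,\delta]$, the numerator is continuous on the compact set $[0,\delta] \times [\ushort{r},\bar r]$ and hence uniformly bounded; the delicate point is a positive lower bound on the denominator. First I would check that $H$ extends continuously to $\varrho = 0$ with value $0$: from \eqref{P2}, $p(s) \le \tfrac{1}{a\gamma}s^\gamma + bs$, so $\int_\varrho^{\bar\varrho} p(s)/s^2\,\dd s = O(1 + \log(1/\varrho))$ as $\varrho \to 0^+$, and the prefactor $\varrho$ in $H(\varrho)$ absorbs this mild singularity. A direct computation (using $rH'(r) = r\int_{\bar\varrho}^r p(s)/s^2\,\dd s + p(r)$) then gives $H(0|r) = rH'(r) - H(r) = p(r) > 0$, so by continuity and strict convexity $H(\varrho|r)$ admits a positive lower bound on $[0,\delta]\times[\ushort{r},\bar r]$, which closes this case.

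For the large-density range $\varrho \ge R$, the upper bound in \eqref{P2} (integrated twice) gives $|p(\varrho) - p(r) - p'(r)(\varrho - r)| \le C(\ushort{r},\bar r)\,\varrho^\gamma$ for $\varrho$ large. For the denominator, the lower bound $p'(s) \ge as^{\gamma-1} - b$ yields $H''(s) \ge as^{\gamma-2} - b/s$, and inserting this in the Taylor formula produces $H(\varrho|r) \ge \tfrac{a}{\gamma(\gamma-1)}\varrho^\gamma + o(\varrho^\gamma)$ uniformly in $r \in [\ushort{r},\bar r]$, which exceeds $\tfrac{a}{2\gamma(\gamma-1)}\varrho^\gamma$ once $R$ is chosen large enough in terms of $\ushort{r}, \bar r, a, b, \gamma$; the ratio is then controlled. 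The main obstacle I expect is this small-$\varrho$ analysis: one has to verify that the apparent logarithmic singularity of $p(s)/s^2$ at $s=0$ is killed by the prefactor $\varrho$ in $H$, and that $H(0|r) > 0$ strictly, so that compactness produces an honest positive lower bound on $H(\varrho|r)$ near $\varrho = 0$. Once this is done, the other two regimes are routine consequences of the two-sided bounds in \eqref{P2}.
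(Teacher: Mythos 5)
Your proof is correct and follows essentially the same route as the paper: a three-regime split (small, intermediate, large density) with the numerator bounded by a constant, by $\sup|p''|\,(\varrho-r)^2$ against $\inf (p'/s)\,(\varrho-r)^2$, and by $C\varrho^\gamma$ against $c\varrho^\gamma$ respectively. The only cosmetic difference is in the small-density case, where the paper uses monotonicity of $H(\cdot|r)$ in each argument to reduce to $H(\ushort{r}/2|\ushort{r})>0$, while you use continuity up to $\varrho=0$ (via $H(0|r)=p(r)>0$) plus compactness — both are valid.
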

\begin{proof}
The proof uses similar arguments as for example in \cite{KwNoSa} (see in particular Lemma 8.2). 
Let us start with some useful properties of $H$. Let 
\[ H(\varrho|r) = H(\varrho)-H(r)-H'(r)(\varrho-r). \]
From the definition of $H$, it holds that $H''(\varrho)={p'(\varrho)}/{\varrho}>0$ for $\varrho>0$, thus $H(\varrho|r)\geq 0$. Moreover,
\begin{equation}\label{H_rel_der} \partial_\varrho H(\varrho|r) = H'(\varrho)-H'(r), \quad \partial_r H(\varrho|r) = -H''(r)(\varrho-r). \end{equation}
We consider now three cases, depending on the values of $\varrho$: 
Case 1) $\varrho<\ushort{r}/2$; Case 2) $\rho \ge M$ for some $M$ chosen and then Case 3) $\varrho\in \left[ \ushort{r}/2,M\right]$. 

\bigskip

\noindent Case 1): \textit{$\varrho<\ushort{r}/2$.} As $\varrho<r$, we use (\ref{H_rel_der}) to conclude that $\varrho\mapsto H(\varrho|r)$ is decreasing, whereas $r\mapsto H(\varrho|r)$ is increasing. Therefore
\[ H(\varrho|r)\geq H\left(\frac{\ushort{r}}{2}|r\right)\geq H\left(\frac{\ushort{r}}{2}|\ushort{r}\right)>0. \]
On the other hand, 
\[ \big|p(\varrho)-p(r)-p'(r)(\varrho-r)\big| \leq p(\ushort{r}) + p(\bar{r}) + \bar{r}\sup_{\xi\in [\ushort{r},\bar{r}]}p'(\xi) = c(\ushort{r},\bar{r}). \]

\smallskip

\noindent Case 2): \textit{$\varrho>M$ for some $M$ large enough.} Assume that $\varrho>M$ for some $M>\bar{r}$ which we will choose later. We use (\ref{P2}) to show that $H(\varrho|r)\geq c\varrho^\gamma$. Indeed, from (\ref{P2}) it follows that
\[ p(\varrho) \geq \frac{a}{\gamma}\varrho^\gamma - b\varrho \]
and then
\[ H(\varrho) \geq \varrho\int_{\bar{\varrho}}^\varrho \Big(\frac{a}{\gamma}s^{\gamma-2} - \frac{b}{s}\Big)\;\dd s = \frac{a}{\gamma(\gamma-1)}\varrho^\gamma - \frac{a}{\gamma(\gamma-1)}\bar{\varrho}^{\gamma-1}\varrho - b\varrho\ln\varrho + b\ln\bar{\varrho}\varrho. \]
Consequently, 
\[ H(\varrho|r) \geq \frac{a}{\gamma(\gamma-1)}\varrho^\gamma - b\varrho\ln\varrho + \alpha\varrho + \beta \]
for some $\alpha,\beta\in\R$ depending on $a,b,\gamma,\bar{\varrho}$ and $\ushort{r},\bar{r}$. Therefore, choosing $M$ large enough (depending on the parameters $a,b,\gamma,\bar{\varrho},\ushort{r},\bar{r}$), we get 
\[ H(\varrho|r)\geq c\varrho^\gamma. \]
Conversely, from the lower bound on $p$ we know that for $M$ sufficiently large
\[ p(\varrho)-p(r)-p'(r)(\varrho-r)\geq 0, \]
and then
\[ p(\varrho)-p(r)-p'(r)(\varrho-r) \leq p(\varrho) \leq \varrho^\gamma. \] 

\smallskip
\noindent Case 3): \textit{$\varrho\in \left[ \ushort{r}/2,M\right]$.} It is enough to notice that
\[ \big|p(\varrho)-p(r)-p'(r)(\varrho-r)\big| \leq \sup_{\xi\in[\frac{\ushort{r}}{2},M]}|p''(\xi)|(\varrho-r)^2 \]
and
\[ H(\varrho)-H(r)-H'(r)(\varrho-r) \geq \inf_{\xi\in [\frac{\ushort{r}}{2},M]}\frac{p'(\xi)}{\xi}(\varrho-r)^2 \geq c(\ushort{r},M)(\varrho-r)^2 \]
since $p'$ is continuous and positive.

\smallskip

\noindent Combining the three estimates, we end the proof.
\end{proof}

\subsection{An important identity}

\begin{lem}\label{syfny_lemat}
For $\varrho$ such that $\varrho^{1/4}\in L^4(0,T;W^{1,4}(\T^d)$ and $\sqrt{\varrho}\in L^2(0,T;H^2)$, the following equality holds:
\[ \int_0^T\!\!\int_{\T^d} \Delta\sqrt{\varrho}\left(4\left|\nabla\varrho^{1/4}\right|^2+\Delta\sqrt{\varrho}\right)\;\dd x\dd t = \int_0^T\!\!\int_{\T^d} \left|\nabla^2\sqrt{\varrho}-4\nabla\varrho^{1/4}\otimes\nabla\varrho^{1/4}\right|^2\;\dd x\dd t. \]
\end{lem}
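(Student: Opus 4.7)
I would set $f:=\sqrt{\varrho}$ and $g:=\varrho^{1/4}$, so that $f=g^2$ and the chain rule yields
\[
\nabla f = 2g\,\nabla g,\qquad \Delta f = 2|\nabla g|^2+2g\,\Delta g,\qquad \nabla^2 f = 2\,\nabla g\otimes\nabla g + 2g\,\nabla^2 g.
\]
Because all integrations are over $\T^d$, periodic integration by parts produces no boundary terms. The first step is to expand the square on the right,
\[
|\nabla^2 f - 4\,\nabla g\otimes\nabla g|^2 = |\nabla^2 f|^2 - 8\,\nabla^2 f:(\nabla g\otimes\nabla g) + 16|\nabla g|^4,
\]
and to use the standard toroidal identity $\int_{\T^d}|\nabla^2 f|^2\,\dd x = \int_{\T^d}(\Delta f)^2\,\dd x$ (two integrations by parts on $f\in H^2(\T^d)$) to absorb the $|\nabla^2 f|^2$ term against the $(\Delta\sqrt{\varrho})^2$ piece on the left. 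After this cancellation and dividing by $4$, the whole claim reduces to showing
\[
\int_{\T^d} |\nabla g|^2\,\Delta f\,\dd x \;=\; -2\int_{\T^d} \nabla^2 f:(\nabla g\otimes\nabla g)\,\dd x + 4\int_{\T^d} |\nabla g|^4\,\dd x.
\]

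Next, I would verify this reduced identity by substituting the chain-rule expressions and performing one further integration by parts. The left-hand side becomes
\[
\int|\nabla g|^2\bigl(2|\nabla g|^2+2g\,\Delta g\bigr)\,\dd x = 2\int|\nabla g|^4\,\dd x + 2\int g|\nabla g|^2\,\Delta g\,\dd x,
\]
and a periodic integration by parts rewrites the last integral as
\[
\int g|\nabla g|^2\,\Delta g\,\dd x = -\int|\nabla g|^4\,\dd x - 2\int g\,\nabla^2 g:(\nabla g\otimes\nabla g)\,\dd x,
\]
so the left-hand side collapses to $-4\int g\,\nabla^2 g:(\nabla g\otimes\nabla g)\,\dd x$. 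On the right-hand side, plugging in $\nabla^2 f = 2\,\nabla g\otimes\nabla g + 2g\,\nabla^2 g$ gives $-4\int|\nabla g|^4 - 4\int g\,\nabla^2 g:(\nabla g\otimes\nabla g) + 4\int|\nabla g|^4$, which equals the same quantity. This closes the computation.

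The main obstacle is justifying these manipulations at the stated regularity, since $\nabla^2 g$ is not directly in the hypotheses. The key observation is that the combination
\[
g\,\nabla^2 g \;=\; \tfrac{1}{2}\nabla^2 f - \nabla g\otimes\nabla g
\]
lies in $L^2([0,T]\times\T^d)$ (both summands do, by the assumed $\sqrt{\varrho}\in L^2(0,T;H^2)$ and $\varrho^{1/4}\in L^4(0,T;W^{1,4})$), and similarly $g|\nabla g|^2\,\Delta g$ and $g\,\nabla^2 g:(\nabla g\otimes\nabla g)$ are in $L^1$. To make the two integrations by parts rigorous, I would mollify $\sqrt{\varrho}$ in space and add a small lower truncation to keep it smooth and strictly positive, verify the identity in that smooth setting where everything is classical, and then pass to the limit using strong $H^2$ convergence of the regularized $\sqrt{\varrho}$, strong $W^{1,4}$ convergence of the regularized $\varrho^{1/4}$, and the uniform $L^2$ control of $g\,\nabla^2 g$ coming from the displayed decomposition.
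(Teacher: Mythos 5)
Your algebraic part is correct and is in fact the same identity the paper isolates: after expanding the square and using $\int_{\T^d}|\nabla^2\sqrt{\varrho}|^2\,\dd x=\int_{\T^d}(\Delta\sqrt{\varrho})^2\,\dd x$, everything reduces to
\[
\int_0^T\!\!\int_{\T^d}|\nabla\varrho^{1/4}|^2\,\Delta\sqrt{\varrho}\;\dd x\dd t
=-2\int_0^T\!\!\int_{\T^d}\nabla^2\sqrt{\varrho}:(\nabla\varrho^{1/4}\otimes\nabla\varrho^{1/4})\;\dd x\dd t
+4\int_0^T\!\!\int_{\T^d}|\nabla\varrho^{1/4}|^4\;\dd x\dd t,
\]
and your chain-rule verification of this for smooth, strictly positive densities is fine. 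The gap is in the rigorous justification, which is the entire content of the lemma. In your scheme the regularized $\varrho^{1/4}$ is forced to be $g_{\varepsilon,\delta}:=\big((\sqrt{\varrho})_\varepsilon+\delta\big)^{1/2}$ — if you mollified $\varrho^{1/4}$ independently, the relation $f=g^2$ on which your smooth identity rests would be destroyed — and the claimed ``strong $W^{1,4}$ convergence of the regularized $\varrho^{1/4}$'' is then not a standard mollifier fact: $\nabla g_{\varepsilon,\delta}=\nabla(\sqrt{\varrho})_\varepsilon/\big(2\sqrt{(\sqrt{\varrho})_\varepsilon+\delta}\big)$ is a nonlinear expression in the mollification, and the difficulty sits precisely on the vacuum set $\{\varrho=0\}$, which is why the lemma is delicate at all. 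The step can be repaired, e.g. by Cauchy--Schwarz under the mollifier, $|\nabla(\sqrt{\varrho})_\varepsilon|=|(2\varrho^{1/4}\nabla\varrho^{1/4})_\varepsilon|\le 2\big((\sqrt{\varrho})_\varepsilon\big)^{1/2}\big((|\nabla\varrho^{1/4}|^2)_\varepsilon\big)^{1/2}$, which yields the uniform domination $|\nabla g_{\varepsilon,\delta}|\le\big((|\nabla\varrho^{1/4}|^2)_\varepsilon\big)^{1/2}$, combined with the fact that $\nabla\varrho^{1/4}=0$ a.e.\ on $\{\varrho=0\}$ and a generalized dominated convergence argument; but as written this convergence is asserted rather than proved, and it is exactly the nontrivial point. (Also, a lower truncation by $\max((\sqrt{\varrho})_\varepsilon,\delta)$ is not smooth; you want the additive shift $(\sqrt{\varrho})_\varepsilon+\delta$.)

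For comparison, the paper sidesteps positivity regularization entirely: it mollifies $\varrho^{1/4}$ itself, integrates by parts so that the Hessian of $\varrho^{1/4}$ only ever appears through the combination $\nabla\big(\varrho^{1/4}\nabla(\varrho^{1/4})_\varepsilon\big)$, and identifies its limit with $\tfrac12\nabla^2\sqrt{\varrho}$ via Friedrichs' commutator lemma, using $2\varrho^{1/4}\nabla\varrho^{1/4}=\nabla\sqrt{\varrho}$; no division by $\varrho$ and no positivity shift are needed. So either complete your route with the $L^4$ convergence argument sketched above, or switch to the commutator argument; as it stands, the limit passage hides the step the lemma is really about.
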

\begin{proof}
We need to first regularize $\varrho$ and then integrate by parts. Let $\eta_\varepsilon$ be a standard mollifier and let $f_\varepsilon=f\ast\eta_\varepsilon$. At the regularized level, we write
\[ \nabla\left(\varrho^{1/4}\nabla(\varrho^{1/4})_\varepsilon\right) = \nabla\varrho^{1/4}\otimes\nabla(\varrho^{1/4})_\varepsilon + \varrho^{1/4}\nabla^2(\varrho^{1/4})_\varepsilon \]
and therefore
\[\begin{aligned} \int_0^T\!\!\int_{\T^d} \ddiv\left(\varrho^{1/4}\nabla(\varrho^{1/4})_\varepsilon\right)|\nabla(\varrho^{1/4})_\varepsilon|^2\;\dd x\dd t =& -2\int_0^T\!\!\int_{\T^d}\varrho^{1/4}\nabla(\varrho^{1/4})_\varepsilon\otimes\nabla(\varrho^{1/4})_\varepsilon:\nabla^2(\varrho^{1/4})_\varepsilon\;\dd x\dd t \\
=& -2\int_0^T\!\!\int_{\T^d}\nabla(\varrho^{1/4})_\varepsilon\otimes\nabla(\varrho^{1/4})_\varepsilon:\nabla\left(\varrho^{1/4}\nabla(\varrho^{1/4})_\varepsilon\right)\;\dd x\dd t \\
&+ 2\int_0^T\!\!\int_{\T^d}|\nabla(\varrho^{1/4})_\varepsilon|^2\nabla(\varrho^{1/4})_\varepsilon\cdot\nabla\varrho^{1/4}\;\dd x\dd t.
\end{aligned}\]
Note that by Friedrich's Lemma (see e. g. \cite[Lemma 2.3]{lions}) we have
\[ \left\|\nabla\left(2\varrho^{1/4}\nabla(\varrho^{1/4})_\varepsilon\right) - \nabla^2(\sqrt{\varrho})_\varepsilon\right\|_{L^2([0,T]\times\T^d))} = \left\|\nabla\left(2\varrho^{1/4}\nabla(\varrho^{1/4})_\varepsilon\right) - \nabla(2\varrho^{1/4}\nabla\varrho^{1/4})_\varepsilon\right\|_{L^2([0,T]\times\T^d))}\to 0 \]
as $\varepsilon\to 0$, and analogously
\[ \left\|\ddiv\left(2\varrho^{1/4}\nabla(\varrho^{1/4})_\varepsilon\right)-\Delta(\sqrt{\varrho})_\varepsilon\right\|_{L^2([0,T]\times\T^d)} \to 0. \]
Using the regularity of $\varrho$ and the above calculations, we write
\[\begin{aligned} \int_0^T\!\!\int_{\T^d}\Delta\sqrt{\varrho}|\nabla\varrho^{1/4}|^2\;\dd x\dd t =& \lim_{\varepsilon\to 0}\int_0^T\!\!\int_{\T^d}\Delta(\sqrt{\varrho})_\varepsilon|\nabla(\varrho^{1/4})_\varepsilon|^2\;\dd x\dd t \\
=& 2\lim_{\varepsilon\to 0}\int_0^T\!\!\int_{\T^d}\ddiv\left(\varrho^{1/4}\nabla(\varrho^{1/4})_\varepsilon\right)|\nabla(\varrho^{1/4})_\varepsilon|^2\;\dd x\dd t \\
=& -4\lim_{\varepsilon\to 0}\int_0^T\!\!\int_{\T^d}\nabla(\varrho^{1/4})_\varepsilon\otimes\nabla(\varrho^{1/4})_\varepsilon:\nabla\left(\varrho^{1/4}\nabla(\varrho^{1/4})_\varepsilon\right)\;\dd x\dd t \\
&+ 4\lim_{\varepsilon\to 0}\int_0^T\!\!\int_{\T^d}|\nabla(\varrho^{1/4})_\varepsilon|^2\nabla(\varrho^{1/4})_\varepsilon\cdot\nabla\varrho^{1/4}\;\dd x\dd t \\
=& -2\int_0^T\!\!\int_{\T^d}\nabla\varrho^{1/4}\otimes\nabla\varrho^{1/4}:\nabla^2\sqrt{\varrho}\;\dd x\dd t + 4\int_0^T\!\!\int_{\T^d}|\nabla\varrho^{1/4}|^4\;\dd x\dd t.
\end{aligned}\]
In conclusion, using the fact that $\displaystyle\int_{\T^d}|\Delta\sqrt{\varrho}|^2\;\dd x =\int_{\T^d}|\nabla^2\sqrt{\varrho}|^2\;\dd x$, we obtain
\[\begin{aligned}
\int_0^T\!\!\int_{\T^d} \Delta\sqrt{\varrho}\left(4\left|\nabla\varrho^{1/4}\right|^2+\Delta\sqrt{\varrho}\right)\;\dd x\dd t =& \int_0^T\!\!\int_{\T^d} 16|\nabla\varrho^{1/4}|^4 - 8\nabla\varrho^{1/4}\otimes\nabla\varrho^{1/4}:\nabla^2\sqrt{\varrho} + |\nabla^2\sqrt{\varrho}|^2\;\dd x\dd t \\
=& \int_0^T\!\!\int_{\T^d}\left|\nabla^2\sqrt{\varrho}-4\nabla\varrho^{1/4}\otimes\nabla\varrho^{1/4}\right|^2\;\dd x\dd t
\end{aligned}\]
\end{proof}

\section{Formal estimates} 

\paragraph{Energy.} The energy functional associated to our system is given by
\begin{equation*}
 E(t) = \frac{1}{2}\int \Big(\varrho|u|^2 + \kappa\varrho|\nabla\ln\varrho|^2 \Big)\;\dd x + \int H(\varrho) \;\dd x.
\end{equation*}
where 
$\displaystyle H(\rho) = \rho \int_{\overline\rho}^\rho {p(s)}/{s^2} \, ds$ with $\overline \rho$ a constant reference density.
To verify the conservation of the energy, we need to recall the B\"ohm identity
\begin{lem}\label{lem:Korteweg}
We have
\[
\ddiv(\varrho\nabla^2\ln\varrho)
=\varrho \nabla \Big(\frac1\varrho \Delta \varrho -\frac1{2\varrho^2}|\nabla \varrho|^2\Big)=2\rho \nabla \frac{\Delta \sqrt{\rho}}{\sqrt{\rho}}.
\]
\end{lem}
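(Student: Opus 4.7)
The identity is purely algebraic/pointwise, so the plan is to compute both expressions in coordinates starting from $\nabla\ln\varrho=\nabla\varrho/\varrho$, and then compare. I would carry this out in three short steps.

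\textbf{Step 1 (rewrite $\varrho\nabla^2\ln\varrho$).} Differentiating $\nabla\ln\varrho = \nabla\varrho/\varrho$ gives
\[
 \nabla^2\ln\varrho \;=\; \frac{\nabla^2\varrho}{\varrho} \;-\; \frac{\nabla\varrho\otimes\nabla\varrho}{\varrho^2},
\]
so that $\varrho\,\nabla^2\ln\varrho = \nabla^2\varrho - \tfrac{1}{\varrho}\nabla\varrho\otimes\nabla\varrho$. Taking the divergence componentwise (using $\ddiv(\nabla^2\varrho)=\nabla\Delta\varrho$ and the product rule on the second term) yields, for each $i$,
\[
 \bigl(\ddiv(\varrho\nabla^2\ln\varrho)\bigr)_i \;=\; \partial_i\Delta\varrho \;-\; \frac{\Delta\varrho}{\varrho}\partial_i\varrho \;-\; \frac{1}{2\varrho}\partial_i|\nabla\varrho|^2 \;+\; \frac{|\nabla\varrho|^2}{\varrho^2}\partial_i\varrho .
\]

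\textbf{Step 2 (expand the middle expression).} Directly differentiating the scalar $\Delta\varrho/\varrho - |\nabla\varrho|^2/(2\varrho^2)$ and multiplying by $\varrho$ gives
\[
 \varrho\,\partial_i\!\Big(\tfrac{\Delta\varrho}{\varrho} - \tfrac{|\nabla\varrho|^2}{2\varrho^2}\Big) = \partial_i\Delta\varrho - \frac{\Delta\varrho}{\varrho}\partial_i\varrho - \frac{1}{2\varrho}\partial_i|\nabla\varrho|^2 + \frac{|\nabla\varrho|^2}{\varrho^2}\partial_i\varrho,
\]
which matches the expression from Step~1 term by term. This proves the first equality.

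\textbf{Step 3 (reduction to $\Delta\sqrt\varrho/\sqrt\varrho$).} The second equality only needs the pointwise identity
\[
 2\,\frac{\Delta\sqrt\varrho}{\sqrt\varrho} \;=\; \frac{\Delta\varrho}{\varrho} - \frac{|\nabla\varrho|^2}{2\varrho^2},
\]
which follows from $\nabla\sqrt\varrho=\nabla\varrho/(2\sqrt\varrho)$ and hence $\Delta\sqrt\varrho = \Delta\varrho/(2\sqrt\varrho) - |\nabla\varrho|^2/(4\varrho^{3/2})$. Applying $\varrho\nabla$ to both sides and combining with Step~2 closes the proof.

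There is no real obstacle: the only thing to watch is index bookkeeping in Step~1 (using the paper's conventions $u\otimes v=(u_j v_i)$ and $\ddiv(A_{ij})=(\sum_j\partial_j A_{ij})_i$) and the fact that the identities are only claimed where $\varrho>0$, so all divisions by $\varrho$ are legitimate; at the level of smooth positive $\varrho$ where the Lemma is used, everything is pointwise and no approximation argument is required.
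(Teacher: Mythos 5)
Your proposal is correct and follows essentially the same route as the paper: the same decomposition $\varrho\nabla^2\ln\varrho=\nabla^2\varrho-\tfrac1\varrho\nabla\varrho\otimes\nabla\varrho$ with a componentwise divergence for the first equality, and the pointwise formula for $\Delta\sqrt\varrho$ for the second. Your Step 3 is a minor streamlining (recognizing the scalar identity $2\Delta\sqrt\varrho/\sqrt\varrho=\Delta\varrho/\varrho-|\nabla\varrho|^2/(2\varrho^2)$ before taking the gradient, rather than differentiating the quotient as the paper does), but the argument is the same in substance.
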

\begin{proof}
As $\varrho\nabla^2\ln\varrho = \varrho \nabla \frac{\nabla \varrho}{\varrho}=\nabla^2\varrho - \frac{\nabla \varrho \otimes \nabla \varrho}{\varrho}$, we observe that
\[
(\ddiv \nabla^2\varrho )_i=\sum_j \partial_j \partial_{i,j}\rho= (\nabla \Delta \rho)_i
\]
and
\[
 -\Big(\ddiv \frac{\nabla \varrho \otimes \nabla \varrho}{\varrho} \Big)_i
 =\sum_j\frac{\partial_i \varrho (\partial_j \varrho)^2 - \varrho \partial_{i,j}\varrho \partial_j \varrho - \varrho \partial_{i}\varrho \partial_j^2 \varrho }{\rho^2}
 =\Big(\frac{|\nabla\varrho|^2 \nabla\varrho}{\varrho^2}-\frac{\nabla |\nabla \varrho|^2 }{2\varrho}- \frac{\Delta\varrho \nabla\varrho}{\varrho}\Big)_i.
\]
So we conclude the first equality of this lemma by noticing
\[
\varrho \nabla \Big(\frac1\varrho \Delta \varrho -\frac1{2\varrho^2}|\nabla \varrho|^2\Big)=
\nabla \Delta \rho - \frac{\nabla\varrho}{\varrho}\Delta \varrho - \frac1{2\varrho} \nabla |\nabla \varrho|^2 +\frac{\nabla \varrho}{\varrho^2}|\nabla \varrho|^2.
\]
For the second, we compute
\[
\Delta \sqrt{\rho} = \ddiv \frac{\nabla\rho}{2\sqrt{\rho}}= \frac{\Delta\rho}{2\sqrt{\rho}} - \frac{|\nabla\rho|^2}{4\rho\sqrt{\rho}}
\]
hence
\[
\nabla \frac{\Delta \sqrt{\rho}}{\sqrt{\rho}} = \frac{\nabla\Delta\rho}{2\rho} -\frac{\Delta\rho \nabla \rho}{2\rho^2} - \frac{\nabla |\nabla\rho|^2}{4\rho^2} + \frac{|\nabla\rho|^2 \nabla\rho}{2\rho^3} .
\]
Multiplying this relation by $2\rho$ gives the second equality of the lemma.
\end{proof}
Thanks to this lemma, we verify the conservation of the energy.
\begin{prop}\label{prop:energy}
If $(\varrho,u)$ is a strong solution to \eqref{NSK}, then 
\begin{equation}\label{diff_energy}\frac{\dd E}{\dd t}= - \int_{\T^d} \varrho\mathbb{S}(\D u):\D u\;\dd x. \end{equation}
\end{prop}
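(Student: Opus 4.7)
The plan is to differentiate $E$ under the integral sign and substitute the evolution equations, then show that everything except the dissipative stress contribution cancels pairwise. Since the statement concerns a strong solution, all integrations by parts are justified and we may treat $\varrho>0$ so that $\ln\varrho$ is smooth.

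First, I would split $E = K + C + P$ into the kinetic part $K=\tfrac12\int \varrho|u|^2$, the capillary part $C=\tfrac\kappa2\int \varrho|\nabla\ln\varrho|^2 = \tfrac\kappa2\int |\nabla\varrho|^2/\varrho$, and the pressure part $P=\int H(\varrho)$, and compute each time derivative. For $K$, write $\frac{\dd}{\dd t}(\tfrac12\varrho|u|^2)=u\cdot \partial_t(\varrho u)-\tfrac12|u|^2\partial_t\varrho$, plug in the momentum and continuity equations, and integrate by parts. The convective and pressure fluxes reorganize as
\[
\int_{\T^d}\Big[u\cdot \ddiv(\varrho u\otimes u)-\tfrac12|u|^2\ddiv(\varrho u)\Big]\,\dd x=0,
\]
the stress contributes $-\int \varrho\,\mathbb{S}(\D u):\nabla u=-\int\varrho\,\mathbb{S}(\D u):\D u$ (using the symmetry of $\mathbb{S}(\D u)$), and we are left with a pressure term $\int p(\varrho)\ddiv u$ and a Korteweg term $\int u\cdot \kappa\,\ddiv(\varrho\nabla^2\ln\varrho)$ to be handled jointly with $\dot P$ and $\dot C$.

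For $P$, use $\varrho H''(\varrho)=p'(\varrho)$, i.e.\ $\varrho\nabla H'(\varrho)=\nabla p(\varrho)$, together with $\partial_t\varrho=-\ddiv(\varrho u)$ to get
\[
\frac{\dd P}{\dd t}=-\int H'(\varrho)\ddiv(\varrho u)=\int \varrho u\cdot\nabla H'(\varrho)=\int u\cdot\nabla p(\varrho)=-\int p(\varrho)\ddiv u,
\]
which exactly cancels the pressure term coming from $\dot K$. For $C$, compute
\[
\frac{\dd C}{\dd t}=-\kappa\int \partial_t\varrho\left[\ddiv\!\Big(\tfrac{\nabla\varrho}{\varrho}\Big)+\tfrac12\tfrac{|\nabla\varrho|^2}{\varrho^2}\right]\dd x=\kappa\int \ddiv(\varrho u)\left[\Delta\ln\varrho+\tfrac12|\nabla\ln\varrho|^2\right]\dd x,
\]
and integrate by parts to obtain $-\kappa\int \varrho u\cdot\nabla\!\left(\Delta\ln\varrho+\tfrac12|\nabla\ln\varrho|^2\right)\dd x$. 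The algebraic identity $\Delta\ln\varrho+\tfrac12|\nabla\ln\varrho|^2=2\,\Delta\sqrt{\varrho}/\sqrt{\varrho}$, combined with the B\"ohm identity of Lemma~\ref{lem:Korteweg} ($\ddiv(\varrho\nabla^2\ln\varrho)=2\varrho\nabla(\Delta\sqrt\varrho/\sqrt\varrho)$), shows that $\dot C$ cancels precisely the Korteweg contribution from $\dot K$.

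The step I expect to require the most care is the capillary cancellation: one has to keep track of which sign the Korteweg term enters $\dot K$ with (via the momentum equation) and verify the algebraic identity relating $\Delta\ln\varrho+\tfrac12|\nabla\ln\varrho|^2$ and $\Delta\sqrt{\varrho}/\sqrt{\varrho}$ before invoking Lemma~\ref{lem:Korteweg}. Everything else is bookkeeping: after pairing the four cancellations (convective in $\dot K$; pressure between $\dot K$ and $\dot P$; Korteweg between $\dot K$ and $\dot C$), the only surviving contribution is $-\int \varrho\,\mathbb{S}(\D u):\D u\,\dd x$, which is \eqref{diff_energy}.
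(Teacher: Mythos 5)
Your proposal is correct and follows essentially the same route as the paper: the same splitting of $E$, the same cancellation of the convective, pressure and Korteweg contributions, and the same use of the identity behind Lemma~\ref{lem:Korteweg} (your detour through $\Delta\ln\varrho+\tfrac12|\nabla\ln\varrho|^2=2\Delta\sqrt{\varrho}/\sqrt{\varrho}$ is just an equivalent rewriting of the paper's direct identity $\varrho\nabla\Delta\ln\varrho+\nabla^2\ln\varrho\,\nabla\varrho=\ddiv(\varrho\nabla^2\ln\varrho)$). No gaps.
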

\begin{proof}
First we notice that
\[
\frac12\partial_t \Big(\rho |u|^2\Big)=\frac12 |u|^2 \partial_t \rho +\rho u\cdot \partial_t u = \frac12 |u|^2 \partial_t \rho + u\cdot \partial_t (\rho u) - |u|^2 \partial_t \rho = u\cdot \partial_t (\rho u) -\frac12 |u|^2 \partial_t \rho ,
\]
and we write by using \eqref{NSK}:
\begin{align*}
 \frac12\partial_t \Big(\rho |u|^2\Big) =
 - u\ddiv(\varrho u\otimes u) +\frac12 |u|^2 \ddiv(\rho u) + u \ddiv(\varrho\mathbb{S}(\D u)) -u \nabla p(\varrho) + \kappa u \ddiv(\varrho\nabla^2\ln\varrho) .
\end{align*}
As
\[
- u\ddiv(\varrho u\otimes u) +\frac12 |u|^2 \ddiv(\rho u) = 
\sum_{i,j} -u_i^2 \partial_j (\rho u_j) - \frac12 \rho u_j \partial_j u_i^2 +\frac12 u_i^2 \partial_j (\rho u_j) =- \frac12 \ddiv \Big(|u|^2 \rho u \Big)
\]
the integral of the two first terms is zero.
The integral of the third term gives
\[
\int_{\T^d} u \ddiv(\varrho\mathbb{S}(\D u)) = - \int_{\T^d} \rho \nabla u : \mathbb{S}(\D u) = - \int_{\T^d} \rho \D u : \mathbb{S}(\D u).
\]

The integral of the forth term is going with the integral involving $H$, because
\[
\int_{\T^d} \partial_t H(\rho) = \int_{\T^d} H'(\rho)\partial_t\rho = \int_{\T^d} \rho u\cdot \nabla ( H'(\rho)) = \int_{\T^d} \rho u\cdot H''(\rho)\nabla \rho = \int_{\T^d} u\cdot p'(\rho)\nabla \rho =\int_{\T^d} u\cdot \nabla ( p(\rho)).
\]

To finish the proof, it remains to relate the fifth term to the following:
\begin{align*}
 \frac\kappa2 \partial_t\int_{\T^d} \rho |\nabla \ln \rho|^2
 =& \frac\kappa2 \partial_t\int_{\T^d} \frac{|\nabla \rho|^2}{\rho} 
 = -\kappa \int_{\T^d} \partial_t\varrho\left(\ddiv\left(\frac1\varrho\nabla\varrho\right) + \frac{1}{2\varrho^2}|\nabla\varrho|^2\right) \\
 =& \kappa\int_{\T^d} \ddiv(\rho u) \left(\Delta\ln\varrho + \frac{1}{2}|\nabla \ln\rho|^2\right) \\
 =& -\kappa\int_{\T^d} \varrho u\cdot\left(\nabla\Delta\ln\varrho + \nabla^2\ln\varrho\nabla\ln\varrho\right) = -\kappa\int_{\T^d} u\cdot\ddiv\left(\varrho\nabla^2\ln\varrho\right),
\end{align*}
as
\[ \varrho\nabla\Delta\ln\varrho + \varrho\nabla^2\ln\varrho\nabla\ln\varrho = \varrho\nabla\Delta\ln\varrho + \nabla^2\ln\varrho\nabla\varrho = \ddiv\left(\varrho\nabla^2\ln\varrho\right). \]
\end{proof}
Integrating \eqref{diff_energy} in time, we obtain the energy inequality:
\begin{equation}
 E(t) + \int_0^t\int_{\T^d} \varrho\mathbb{S}(\D u):\D u\;\dd x\dd s \leq E(0).
\end{equation}

\end{appendices}

\paragraph{Conflict of Interest and Data Availability:} The authors declare that they have no conflict of interest. Data sharing is not applicable to this article as no datasets were generated or analysed during the current study.

\bibliographystyle{abbrv}
\bibliography{BiblioBrLaSZ.bib}

\end{document}